\newtheorem{Defi}{Definition}
\newtheorem{R}{Remark}
\newtheorem{T}{Theorem}[section]
\newtheorem{T*}{Theorem}
\newtheorem{Pro}{Proposition}
\newtheorem{C}{Corollary}
\newtheorem{Le}{Lemma}[section]
\newtheorem{Le*}{Lemma}
\newcommand{\ds}{\displaystyle}
\newcommand{\B}{\mathbb{B}}
\newcommand{\A}{\mathbb{A}}
\newcommand{\re}{\mathbb{R}}
\newcommand{\cR}{\mathcal{R}}
\newcommand{\und}{\underline}
\newcommand{\ua}{\underline{a}}
\title{\bf{On the Lagrange and Markov
Dynamical Spectra for Anosov Flows in dimension $3$}}
\author{Sergio Augusto Roma\~na Ibarra}
\date{}
\begin{document}

\maketitle
%%%%%%%%%%%%%%%%%%%%%%%%%%%%%%%%%%%%%%
\begin{abstract}
\noindent We consider the Lagrange and the Markov dynamical spectra associated with a conservative Anosov flow on a compact manifold  of dimension $3$ (including geodesic flows of negative curvature and suspension flows). We show that for a large set of real functions and  typical conservative Anosov flows, both the Lagrange and  Markov dynamical spectra have a non-empty interior.
\end{abstract}

\section{Introduction}
%%%%%%%%%%%%%%%%%%%%%%%%%%%%%%%%%%%%%%%%%%%%%%%%%%%%%%%%%%%%%%%%%%%%%%%%%%%%%%%%%%%%

\,\indent The Lagrange and Markov spectra are born in the theory of numbers which  have a dynamic interpretation that can be explored in a more general context using the hyperbolicity of some systems.\\
\indent Many recent results, such as \cite{RM}, {\cite{CMM}}, and \cite{CMR}, show that the Lagrange and Markov spectra are well behaved when looking at the hyperbolic world, including geodesic flows of negative curvature (cf. \cite{PP1}), Teichm\" uller flows (cf. \cite{HMU}), Veech surfaces (cf. \cite{AMU}), among others.\\
Good references for an introduction to these spectra can be found at \cite{CF} and {\cite{OurBook}}.

\subsection{Dynamical Markov and Lagrange Spectra}

%For our purposes, let us consider a more general definition of the Lagrange and Markov spectra. 
Let $M$ be a smooth manifold, $T=\mathbb{Z}$ or $\mathbb{R}$, and $\phi=(\phi^t)_{t\in T}$ be a discrete-time ($T=\mathbb{Z}$) or continuous-time ($T=\mathbb{R}$) smooth dynamical system on $M$, that is, $\phi^t:M\to M$ are smooth diffeomorphisms, $\phi^0=\textrm{id}$, and $\phi^t\circ\phi^s=\phi^{t+s}$ for all $t,s\in \re$.   

Given a compact invariant subset $\Lambda\subset M$ and a function $f:M\to\mathbb{R}$, we denote the \emph{dynamical Markov} and \emph{the Lagrange spectrum}, as $M(\phi, \Lambda, f)$ and $L(\phi, \Lambda, f)$, respectively. They are defined as follows
$$M({\phi, \Lambda, f})=\{m_{\phi, f}(x): x\in\Lambda\} \quad \textrm{and} \quad L({\phi, \Lambda, f})=\{\ell_{\phi, f}(x): x\in\Lambda\},$$
where  
$$m_{\phi, f}(x):=\sup\limits_{t\in \re} f(\phi^t(x)) \quad \textrm{and} \quad \ell_{\phi, f}(x):=\limsup\limits_{t\to+\infty} f(\phi^t(x)).$$
It is easy to see that $L({\phi, \Lambda, f})\subset M({\phi, \Lambda, f})$ (cf. \cite{RM}).\\
\noindent When $\Lambda$ is the whole manifold, we denote $M(\phi, M, f):=M(\phi,f)$ and $L(\phi, M, f):=L(\phi, f)$.\break

\ \\
The first result in the context of discrete dynamic  is due to C. Moreira and S. Roma\~na \cite{RM}, where they proved that: \\
\ \\
\noindent {\bf Theorem }\cite[Main Theorem]{RM} \emph{Let $\Lambda$ be a horseshoe associated to a $C^2$-diffeomorphism $\varphi$ of a surface $N$ such that  $HD(\Lambda)>1$. Then there is arbitrarily close to $\varphi$, a diffeomorphism $\varphi_{0}$ and a $C^{2}$-neighborhood $\mathcal{W}$ of $\varphi_{0}$ such that, if $\Lambda_{\psi}$ denotes the continuation of $\Lambda$ associated to $\psi\in \mathcal{W}$, there is an open and dense set $H_{\psi}\subset C^{1}(N,\re)$ such that for all $f\in H_{\psi}$, we have 
\begin{equation*}
\emph{int} \,L(\psi,\Lambda_{\psi}, f)\neq\emptyset \ \text{and} \ \emph{int}\, M(\psi,\Lambda_{\psi}, f)\neq\emptyset,
\end{equation*}
where $\emph{int}\,A$ denotes the interior of $A$.}\\
\\
This theorem  will be useful to prove our results. 

\subsection{Main Results}\label{main-results}
\indent In this paper, we consider a conservative Anosov flow in dimension $3$, which includes the case of geodesic flow of surface of negative curvature and suspension Anosov flow. More specifically, we consider a three-dimensional connected  $C^{\infty}$-Riemannian manifold $M$ endowed with a finite volume-form. Let $m$ be the measure associated with this form of volume, which we call the \textit{Lebesgue measure}. \\
Let $\mathfrak{X}_{w}^{r}(M)$ be the space of $C^{r}$-conservative vector fields on $M$. Then, we prove the following theorem: 
%%%%%%%%%%%%%%%%%%%%%%%%%%%%%%%%%%%%%%%%%%%%%%%%%%%%%
%\textcolor{red}{PAREI AQUI 06/10/20 AS 22:23}
%%%%%%%%%%%%%%%%%%%%%%%%%%%%%%%%%%%%%%%%%%%%%%%%%%%%%%%%%%%%%%%%
\begin{T}\label{Theorem 1}
Let $\phi\in \mathfrak{X}_{w}^{r}(M)$, $r \geq {2}$ such that  ${\phi}^{t}$ has a basic set $\Lambda$ with Hausdorff dimension bigger than $2$, then $	C^r$-arbitrarily close to $\phi$  there is an open set $\mathcal{W}\subset \mathfrak{X}_{w}^{r}(M)$ such that for any $X\in \mathcal{W}$ one can find a dense and $C^r$-open subset $\mathcal{U}_{X,\Lambda}\subset C^r(M,\re)$, so that   
$$\emph{int}\,  M(f,X)\neq \emptyset \, \ \text{and}\, \, \ \emph{int}\, L(f,X)\neq \emptyset,$$
whenever $f\in \mathcal{U}_{X,\Lambda}$. Moreover, the above statement holds persistently: for any $Y\in \mathcal{W}$, it holds for any $(f,X)$ in a suitable neighborhood of $\mathcal{U}_{Y,\Lambda}\times \{Y\}$ in $C^{r}({M,\re})\times \mathfrak{X}_{w}^{r}(M)$. %Here $int\, A$ denotes the interior of $A$.
\end{T}
%%%%%%%%%%%%%%%%%%%%%%%%%%%%%%%%%%%%%%%%%%%%%%%%%%%%%%%%
%%%%%%%%%%%%%%%%%%%%%%%%%%%%%%%%%%%%%%%%%%%%%%%%%%%%%%%%%%%%%%%%%%
%%%%%%%%%%%%%%%%%%%%%%%%%%%%%%%%%%%%%%%%%%%%%%%%%%%%%%%%%%%%%%%%%%%%%%%%%%%%%%%%%%%%%%%%%%%%%%%%%%%%%%
It worth noting that  the above theorem is valid for transitive Anosov flow which is not necessarily conservative. In this case, the proof is similar to the proof of Theorem \ref{Theorem 1} and does not need the conservative family of perturbations of Subsection \ref{Realiz of the Pert}, for this reason, we will only do the proof for  conservative flows.
\ \\
%\textcolor{red}{PAREI AQUI 11/02/2021 - 02:47}\\
\ \\
%%%%%%%%%%%%%%%%%%%%%%%%%%%%%%%%%%%%%%%%%%%%%%%%%
In the case of geodesic flow, let $N$ be a complete surface, let $g_0$ be a smooth ($C^r$, $r\geq 2$) pinched negatively curved Riemannian metric on $N$ (the curvature is bounded above and below by two negative constants). Let  $\phi^t_{0}$ be the geodesic flow on the unit bundle $SN$ and $\phi_0$ the derivative of the geodesic flow $\phi^t_{0}$. In this case, it is well known that $\phi^t_{0}$ is an Anosov flow (cf. \cite{A} and {\cite{Knieper}}). Moreover, if $N$ has finite volume, then $\phi_0\in \mathfrak{X}_{w}^{r}(SN)$, since the Liouville measure is invariant by the geodesic flow (cf. \cite{P}).  

%surface $N$ has finite volume, then by a result of Dani (\cite{D1}), we can construct hyperbolic sets for the geodesic flow of $N$ with Hausdorff dimension arbitrarily close to $3$. Therefore, let $\mathfrak{X}^{2}_{w}(SN)$ denote the space of $C^2$ conservative vector fields, and let $\phi_0\in \mathfrak{X}^{2}_{w}(SN)$ be the vector field on the unitary tangent bundle $SN$ which is the derivative of the geodesic flow for the metric $g_0$ (the geodesic flow is a conservative flow, due to the fact that the Louville measure is preserved). Under this conditions we have that:
In these conditions, we have the second results 

\begin{C}\label{C1-Theorem 1}Arbitrarily close to $\phi_0$ there exist an open set $\mathcal{V}\subset \mathfrak{X}^{2}_{\omega}(SN)$ such that for any $X\in \mathcal{V}$ one can find a dense and $C^2$-open subset $\mathcal{U}_{X,\Lambda}\subset C^2(SN,\re)$, so that   
$$\emph{int}\, M(f,X)\neq \emptyset \ \ \text{and} \ \ \emph{int}\, L(f,X)\neq \emptyset$$
whenever $f\in \mathcal{U}_{X,\Lambda}$.
Moreover, the above statement holds persistently: for any $Y\in \mathcal{V}$, it holds for any $(f,X)$ in a suitable neighborhood of $\mathcal{U}_{Y,\Lambda}\times \{Y\}$ in $C^{2}({SN,\re})\times \mathfrak{X}^{1}_{w}(SN)$.
\end{C}

\begin{R}
It is important to note that the neighborhood $\mathcal {V}$ of the above corollary is not necessarily a neighborhood of space of  vector field  coming from geodesic flows or Riemannian metric, since small perturbations on the metrics do not produce small perturbation on the geodesic flows.
\end{R} 
%\textcolor{red}{PAREI AQUI 21/10/2020 - 01:11}\\
Another  interesting class of Anosov flows is the {suspension Anosov flows}, which are the suspension of Anosov diffeomorphisms and is defined by
 a \textit{suspension} flow is defined as follows:
Let $\varphi\colon N \to N$ be an Anosov diffeomorphism of a compact manifold
$N$ and  consider the manifold 
 $$N_{\varphi}=\{(x,r): x\in N \, \, \text{and} \, \, 0\leq r \leq 1 \}/(x,1)\sim (\varphi(x),0).$$
The \textit{Anosov suspension flow} of $\varphi$ is  the flow $\psi^{t}_{_{\varphi}}\colon N_{\varphi} \to N_{\varphi}$ induced by the translated time \break $\psi^{t}\colon N\times\re \to N\times \re$, $\psi^{t}(x,s) = (x, s+t)$. We denoted by $\psi_{\varphi}$ the  derivative vector field of $\psi^{t}_{_{\varphi}}$  (cf. \cite{K} for more details).\\

%$\psi^{t}_{_{\varphi}}$ on $N\times \re$ given by $\varphi_{t}(x,s) = (x, s+t)$. The suspension of $\varphi$ is the flow induced by $\varphi^t$ on the manifold obtained from $N\times \re$ by making the identifications $(x,s)\sim(\varphi(x),s+1)$ (cf. \cite{K} for more details).\\

For this class of Anosov flows we prove:
%%%%%%%%%%%%%%%%%%%%%%%%CCCCCCCCCCCCCCCCCCCCCCCCCCCCCCCCCCCCCC
%\textcolor{red}{As $N_{\varphi}$ depend on $\varphi$? FAzer em DIMENSION 2}

\begin{C}\label{C2-Theorem 1} Let  ${\psi}^{t}_{\varphi_{_{0}}}$ be is an  Anosov flow which is a suspension  of a $C^2$- Anosov diffeomorphism $\varphi_0$ of a compact surface  $N$. Then, arbitrarily close to $\varphi_0$ there is an open set $\mathcal{W}$ of $C^2$ Anosov diffeomorphisms such that for any $\varphi\in \mathcal{W}$ we have 
$$\emph{int}\, M(\psi_{_{\varphi}},f)\neq \emptyset \ \ \text{and} \ \ \emph{int}\, L(\psi_{_{\varphi}}, f)\neq \emptyset$$
for any $f$ in a dense and $C^{2}$-open subset $\mathcal{U}_{\varphi}$ of $C^{2}(N_{\varphi},\re)$, where $\psi_{\varphi}^{t}$ is the suspension flow associated to $\varphi\in \mathcal{W}$.
\end{C}
\begin{R}\label{R1C2}
If $\varphi_0$, in  \emph{Corollary \ref{C2-Theorem 1}},  is a  $C^2$- conservative Anosov  diffeomorphism, then $\mathcal{W}$ can be considered  contained in the $C^2$ conservative world (see the end of \emph{section \ref{PC2}}).
\end{R}
%%%%%%%%%%%%%%%%%%%%%%%%%%%%%%%%%CCCCCCCCCCCCCCCCCCCCCCCCCCCCCCCCCC
To prove Theorem \ref{Theorem 1}, we will use the Main Theorem at \cite{RM}, but we point out that its proof is not an immediate consequence of Main Theorem at  \cite{RM}. We comment on three challenges that need to be overcome to apply the Main Theorem at \cite{RM}.\\
The first challenge to overcome  is to show a separation Lemma (see Lemma \ref{L7}) using only the $C^{0}$ stable and unstable foliations of the flow, which allows us to reduce the problem by one dimension. More specifically, the proof of Lemma \ref{L7} involves some techniques of  saturation of surface  by one-dimensional foliations. We first construct a finite number of $C^0$-sections ``transverse" to the Anosov flow  which is saturated by the stable foliation of the basic set $\Lambda$, such that the union of the  box flow neighborhood  of these sections forms a finite cover of $\Lambda$. Then manipulating the hyperbolicity of $\Lambda$, we will make small surgeries to separate this finite number of $C^0$-sections. Finally, we approximate those  $C^0$-cross sections by a $C^\infty$-cross section but now separated. The second challenge is to  produce small conservative perturbations of the flow such that we can obtain, in some way, the conditions to apply the main theorem in \cite{RM} (see section \ref{Realiz of the Pert}). The third  and the hardest challenge to overcome is to construct the set $\mathcal{U}_{X, \Lambda}$ of the statement of Theorem \ref{Theorem 1}, which we need some non-trivial combinatorial arguments  for horseshoe (see Lemma \ref{LIC}).\\ %besides to  produce small conservative perturbations of the flow such that we can obtain, in some way, the conditions to apply the main theorem at \cite{RM} the property $V$ (see subsection \ref{SIRCS}).\\
\ \\
\textbf{Structure of Paper:} The paper is organized the following way: In Section 2, we give a little introduction of Anosov flows, in Section 3, we will get the tool to reduce the Theorem \ref{Theorem 1} to a problem of dimension two and we will construct the ingredients to define the set $\mathcal{U}_{X, \Lambda}$, in  Section \ref{LMS} we will prove the Theorem \ref{Theorem 2}, which is a bi-dimensional version of theorem \ref{Theorem 1}, with featured for the subsections \ref{Family of Pert} and \ref{CA}, finally, in  Section 5 we will prove the Corollary 1 and Corollary 2.
%%%%%%%%%%%%%%%%%%%%%%%%%%%%%%%%%%%%%%%%%%%%%%%%%%%%%%
%%%%%%%%%%%%%%%%%%%%%%%%%%%%%%%%%%%%%%%%%%%%%%%%%%%%%%%
%%%%%%%%%%%%%%%%%%%%%%%%%%%%%%%%%%%%%%%%%%%%%%%%%%%%%%%%%%%%%%%%%%%%%%%%%%%%%%%%%%%%%%%%%%%%%%%%%%%%%%%%%%%%%%%%%%%%%%%%%%%%%%%%%
\section{Preliminaries}

%\indent For any $\theta=(p,v)\in SM$, we will denoted by $\gamma_{\theta}(t)$ the unique geodesic with initial conditions $\gamma_{\theta}(0)=p$ and $\gamma'_{\theta}(0)=v$. 

%%%%%%%%%%%%%%%%%%%%
\noindent 
Let $M$ be a complete Riemannian manifold and $\phi^t:M \rightarrow M$ a flow on $M$. We say that a compact invariant set $\Lambda\subset M$ is hyperbolic for $\phi^{t}$ if: there exists a  splitting $T_{\Lambda}M=E^{s}\oplus \phi\oplus E^{u}$ such that  for each $\theta\in \Lambda$
\begin{eqnarray*}
	d\phi^t_{\theta} (E^s(\theta)) &=& E^s(\phi^t(\theta)),\\
	%d\phi_t(x) (X_c^{*}(x)) &=& X_c^{*}(\phi_t(x)),\\
	d\phi^t_{\theta} (E^u(\theta)) &=& E^u(\phi^t(\theta)),\\
	||D\phi^t_{\theta}\big{|}_{E^s}|| &\leq& C \lambda^{t},\\
	||D\phi^{-t}_{\theta}\big{|}_{E^u}|| &\leq& C \lambda^{t},\\
	%	a\exp(-tb)\cdot ||\xi|| &\leq& ||d\phi_t(x)(\xi)||  \leq a\exp(tb) \cdot ||\xi||, \forall \xi \in X_c^{*},
\end{eqnarray*}
for all $t\geq 0$ with $ C > 0$ and $0 < \lambda <1$, where  $\phi$ is the vector field derivative of the geodesic vector flow.\\
When $\Lambda=M$ we said that the flow is an \emph{Anosov flow}.\\
\noindent The subbundles $E^s$ and $E^u$ are known to be uniquely integrable. %They are tangent to
%the strong stable foliation $W^s$ and strong unstable foliation $W^{u}$. The center stable and center unstable foliations are denoted respectively by $W^{cs}$ and $W^{cu}$ (cf. \cite{K}).\\
From the Stable and Unstable Manifold Theorem \cite{K} it follows that there is $\epsilon>0$ such that for every $x\in \Lambda$ the set 
$$W^{s}_{\epsilon}(x)=\{y: d(\varphi^{t}(x),\varphi^{t}(y))\leq \epsilon \ \text{and} \ d(\varphi^{t}(x),\varphi^{t}(y))\underset{t \to +\infty}{\longrightarrow} 0\}$$
and 
$$W^{u}_{\epsilon}(x)=\{y: d(\varphi^{t}(x),\varphi^{t}(y))\leq \epsilon \ \text{and} \ d(\varphi^{t}(x),\varphi^{t}(y))\underset{t \to -\infty}{\longrightarrow} 0\}$$
are invariant $C^{r}$-manifolds tangent to $E^{s}_{x}$ and $E^{u}_{x}$, respectively, at $x$, where $d$ is the distance on $M$ induced by the Riemannian metric. Then, we call
$W^{s}_{\epsilon}(x)$ the local \emph{strong-stable manifold}  and $W^{u}_{\epsilon}(x)$ the local \emph{strong-unstable manifold}, by abuse of notation, we denote these local manifolds simply writing  $W^{s}_{loc}(x)$ and $W^{u}_{loc}(x)$, respectively.   Moreover, the manifolds $W^{s}_{\epsilon}(x)$ and $W^{u}_{\epsilon}(x)$ vary continuously with $x$ (in general, it is the best one can expect for hyperbolic sets).
Also, if $x\in \Lambda$ one has that 
$$W^{s}(x)=\bigcup_{t\geq 0}\varphi^{-t}(W^{s}_{\epsilon}(\varphi^{t}(x))) \ \ \text{and} \ \ W^{u}(x)=\bigcup_{t\leq 0}\varphi^{-t}W^{u}_{\epsilon}(\varphi^{t}(x))$$
are $C^r$-invariant manifolds immerse in $M$, called the \emph{strong-stable manifold} and \emph{strong-unstable manifold} of $x$, respectively. Finally, the sets 
$$W^{cs}(x)=\bigcup_{t\in\re}W^{s}(\varphi^{t}(x)) \ \ \text{and} \ \ W^{cu}(x)=\bigcup_{t\in\re}W^{u}(\varphi^{t}(x))$$
are invariant $C^{r}$-manifolds tangent to $E^{s}_{x}\oplus \phi(x)$ and $E^{u}_{x}\oplus \phi(x)$, respectively.\\

A special hyperbolic set, where fractal properties are well known, are the \textit {basic} sets, which means:
\begin{enumerate}
\item[(a)] the periodic orbit contained in $\Lambda$ are dense in $\Lambda$,
\item[(b)]$\phi^t|_{\Lambda}$ is transitive,
\item[(c)] There is an open set $U\supset \Lambda$ so that $\Lambda=\bigcap_{t\in\re}\phi^{t}(U)$.
\end{enumerate} 
The definition of hyperbolic sets (basic sets, Anosov) for diffeomorphisms is analog, as well as, the properties above for the stable and unstable manifolds are valid.\\
For diffeomorphisms on a surface, the basic sets also are called the \textit{horseshoe}.\\

\indent Some examples of Anosov flows are geodesic flows on unit tangent
bundles of compact Riemannian manifolds of negative curvature, and \textit{suspensions} of Anosov diffeomorphism (see Section \ref{main-results}).
%%%%%%%%%%%%%%%%%%%%%%%%%%%%%%%%%%%%%%%%%%%%%%%%%%%%%%%%%%%%%%%%%%%%
%%%%%%%%%%%%%%%%%%%%%%%%%%%%%%%%%%%%%%%%%%%%%%%%%%%%%%%%%%%%%%%%%%%%
% is defined as follows:
%let $\varphi\colon N \to N$ be an Anosov diffeomorphism of a compact manifold
%$N$ %(this means that there is an $f$-invariant splitting $TN = E^{s}\oplus E^{u}$ which satisfies conditions analogous to (\ref{EEEE}) and (\ref{EEEE1})).
%and  consider the flow
%$\varphi^{t}$ on $N\times \re$ given by $\varphi_{t}(x,s) = (x, s+t)$. The suspension of $\varphi$ is the flow induced by $\varphi^t$ on the manifold obtained from $N\times \re$ by making the identifications $(x,s)\sim(\varphi(x),s+1)$ (cf. \cite{K} for more details).\\

%%%%%%%%%%%%%%%%%%%%%%%%%%%%%%%%%%%%%%%%%%%%%%%%%%%%%%%%%%%%%%%%%%%%%
%%%%%%%%%%%%%%%%%%%%%%%%%%%%%%%%%%%%%%%%%%%%%%%%%%%%%%%%%%%%%%%%%%%%%

%According to ergodic theory, there are  invariant probability measures for Anosov flows (diffeomorphisms). Thus, denote by $\mathcal{M}$ the set of invariant probability measures for flows (diffeomorphisms). When an Anosov flow (diffeomorphisms) preserve a probability measure  $\tilde{m} \in\mathcal{M}$  absolutely continuous in relation to the normalized Lebesgue measure and 
%we call it \emph{conservative}. This kinds of Anosov flows (diffeomorphisms) are the focus of this work.
\ \\
According to ergodic theory, there are  invariant probability measures for any  Anosov flows (diffeomorphisms). Thus, denote by $\mathcal{M}$ the set of that invariant probability measures. When an Anosov flow (diffeomorphisms) preserves a probability measure  $\tilde{m} \in\mathcal{M}$  absolutely continuous with respect to the normalized Lebesgue measure, we call it \emph{conservative}. In this work, we focus on three-dimensional Anosov flows and two-dimensional Anosov diffeomorphisms. %These focus of this work  types  of Anosov flows (diffeomorphisms) are the focus of this work. 
%There is a special class of Anosov flows (diffeomorphisms), which are the focus of this work,  and are defined by:
%an Anosov flow (diffemorphisms) is said \emph{conservative}, if there is a  of invariant probability measure $\tilde{m} \in\mathcal{M}$, which is absolutely continuous in relation to the normalized Lebesgue measure.

\section{Separation Lemma and Hyperbolic set}\label{SL}

In this section, we will show that is possible to enclose any  hyperbolic  set of $\phi^t$ into a finite number of tubular neighborhood generated by GCS (Good Cross Sections) pairwise disjoint (see Definition \ref{D1GCS}). Using this GCS, for our basic set $\Lambda$, we can construct a basic set with Hausdorff dimension bigger than $1$, for the Poincar\'e map, restricted to the union of such GCS.  Also, we can  conclude that all hyperbolic sets of $\phi^t$ are one-dimensional. For this section, we can assume that $\Lambda$ is simply a hyperbolic set.
\subsection{Good cross-sections}\label{GCS}
The goal of this section is to present the Lemma \ref{L7} (Separation Lemma) which is a very important tool for the proof of theorem \ref{Theorem 1} and whose prove will be made in Appendix \ref{App - Separation}. \\
\indent Let us fix the following notation, we use $\mathcal{F}^{s}$ and $\mathcal{F}^{u}$ the strong stable and unstable  foliation,  \emph{i.e.},  $\mathcal{F}^{i}(x)=W^{i}(x)$ for $i=s, u$, which are continuous foliations of dimension one (not necessarily $C^1$-foliations). We also denote $\mathcal{F}^{s,u}_{\text{loc}}=W^{s,u}_{\text{loc}}$ the local stable (unstable) foliation.
%\textcolor{blue}{Verificar esta defini\c c\~ao}.
\begin{Defi}\label{Def1} A $C^{0}$-surface $S$ is transverse to the flow $\phi^t$, if there are $\theta, r >0$ such that for every $z\in S$  the cone $C_z$ of angle $\theta$ centered in $\phi(z)$ with vertex at the point $z$ satisfies $C_z\cap B_{r}(z)\cap S=\{z\}$. 
\end{Defi}
\begin{Le}\label{Le5}
For each $x \in M$, let  $L$ be a $C^{1}$-embedded curve of dimension one, containing $x$ and $C^1$-transverse to the foliation $\mathcal{F}^{s}$, then the set
\begin{center}
$\ds S_{L}:=\bigcup_{z\in L}\mathcal{F}^{s}(z)$
\end{center}
contains a surface $S_{x}$, $C^{0}$-embedded, which contains $x$ in its interior. Moreover, if $L$ is $C^1$-transverse to the foliation $W^{cs}$ then, $S_{x}$ is $C^0$-transverse to the flow. 
\end{Le}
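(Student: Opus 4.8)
The plan is to build the surface $S_x$ by saturating a short sub-curve of $L$ through $x$ with the strong stable leaves and then show this union is a $C^0$-embedded $2$-disc transverse to the flow. First I would work in a small flow-box neighborhood $U$ of $x$ in which the strong unstable foliation $\mathcal{F}^u$, the strong stable foliation $\mathcal{F}^s$, and the flow direction $\phi$ each admit continuous local charts, using that $TM=E^s\oplus\phi\oplus E^u$ is a continuous splitting and that $\mathcal{F}^s,\mathcal{F}^u$ are continuous one-dimensional foliations. Since $L$ is a $C^1$ curve through $x$ transverse to $\mathcal{F}^u$, by shrinking $L$ I may assume $L$ is a small arc whose tangent line stays uniformly transverse to $E^u$ (and, in the case $L\pitchfork W^{cs}$, uniformly transverse to $E^s\oplus\phi$ as well). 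Inside $U$ each point $z\in L$ has a well-defined local strong stable plaque $\mathcal{F}^s_{loc}(z)$, and continuity of the foliation $\mathcal{F}^s$ together with transversality of $L$ to $\mathcal{F}^u$ guarantees that distinct points of $L$ have disjoint local stable plaques; hence $S_L\cap U = \bigcup_{z\in L}\mathcal{F}^s_{loc}(z)$ is a topological $2$-manifold. I would make this precise by exhibiting an explicit homeomorphism: parametrize $L$ by arclength as $z(s)$, parametrize each local stable plaque by $\mathcal{F}^s$-arclength $\tau\mapsto h(z(s),\tau)$ (continuous in both variables by continuity of the strong stable foliation), and check $(s,\tau)\mapsto h(z(s),\tau)$ is a continuous injection from a small rectangle into $M$, hence a homeomorphism onto its image by invariance of domain; this image is the desired $C^0$-embedded disc $S_x$, with $x=h(z(0),0)$ in its interior.

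Next I would verify transversality to the flow. The tangent "cone structure" to $S_x$ at a point $p$ is controlled by the two directions: the direction along the stable plaque, which lies in $E^s(p)$, and the "longitudinal" direction, which is a $C^0$-small perturbation of $Tz\in T_pL$. When $L$ is transverse to $W^{cs}$, its tangent direction makes a definite angle with $E^s\oplus\phi$; combining this with the fact that the stable-plaque direction lies in $E^s$, the two-dimensional "tangent fan" of $S_x$ stays uniformly away from the flow direction $\phi(p)$. I would then check this uniform angular separation is exactly the cone condition in Definition 4.2: there exist $\theta,r>0$ so that for every $p\in S_x$ the cone $C_p$ of angle $\theta$ around $\phi(p)$ with vertex $p$ meets $S_x\cap B_r(p)$ only at $p$. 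Here I must be slightly careful because $S_x$ is merely $C^0$, so I cannot literally talk about $T_pS_x$; instead I would phrase the estimate in terms of secant vectors $\frac{h(z(s'),\tau')-h(z(s),\tau)}{|\cdot|}$ staying in a fixed cone complementary to a neighborhood of $\pm\phi$, uniformly on the rectangle, which follows from the two angular estimates above plus compactness.

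The main obstacle I anticipate is precisely the low regularity: the strong stable and strong unstable foliations of a general $3$-dimensional Anosov flow need not be $C^1$ (this is emphasized in the introduction of the paper), so I cannot differentiate the parametrization $h$ or speak of $T_pS_x$ directly. Everything — injectivity, the disc structure, and especially the flow-transversality cone condition — must be extracted from continuity of the foliations together with the uniform hyperbolic splitting, using secant/cone estimates and invariance of domain rather than the implicit function theorem. A secondary point requiring care is ensuring the local stable plaques of nearby points of $L$ do not re-intersect globally inside $U$; this is handled by first shrinking $L$ and the plaque length so that the whole construction stays inside one foliation chart where $\mathcal{F}^s$ is a trivial product foliation, at which point disjointness of the plaques over distinct base points is immediate.
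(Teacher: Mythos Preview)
The paper does not actually prove this lemma: it simply writes ``The proof of this Lemma is similar to \cite[Lemma 1]{RM2}''. Your outline is exactly the natural construction one expects behind that reference --- saturate a short sub-arc of $L$ by local strong-stable plaques, parametrize by $(s,\tau)$, and appeal to invariance of domain --- and your care with secant/cone estimates in place of tangent spaces is precisely the extra work needed here, since in the companion paper \cite{RM2} the geodesic-flow foliations are $C^1$ whereas the present paper must handle only $C^0$ foliations.

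One step does not go through as written, though the fault arguably lies in the statement rather than in your argument. You assert that ``transversality of $L$ to $\mathcal{F}^u$ guarantees that distinct points of $L$ have disjoint local stable plaques''. That implication is false: knowing $T_zL\not\subset E^u(z)$ says nothing about whether two nearby points of $L$ lie on the same strong \emph{stable} leaf --- indeed $L$ could itself be a stable arc, which is transverse to $E^u$ yet entirely contained in a single $\mathcal{F}^s$-plaque, and then $S_L$ degenerates to a curve. What your argument actually uses (and what your final paragraph invokes when you pass to a foliation chart for $\mathcal{F}^s$) is transversality of $L$ to $\mathcal{F}^s$, or to $W^{cs}$. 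This is consistent with the paper's own application immediately after the lemma, which takes $L=W^u_\epsilon(x)$ --- a curve \emph{tangent} to $\mathcal{F}^u$, not transverse to it --- so the hypothesis ``transverse to $\mathcal{F}^u$'' is almost certainly a typo for ``transverse to $\mathcal{F}^s$''. With that corrected hypothesis your injectivity step is valid and the rest of your proof stands.
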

\begin{proof}
The first part of the theorem is by definition of $C^0$-foliation. For the second part, note that $L$ is $C^1$-transverse to $\mathcal{F}^{cs}$, then $L$ is $C^1$-transverse to $\phi^t$, moreover, the flow $\phi^{t}$ is $C^1$-transverse to $\mathcal{F}^{s}$, thus by the continuity of $\mathcal{F}^{s}_{\text{loc}}$ we can construct the surface $S_x$, $C^0$-transverse to $\phi^t$.
\end{proof}
In particular, taking $L=W^{u}_{\epsilon}(x)$ with $\epsilon$ given by the stable and unstable manifold theorem, we call $S_{L}:=S_{x}$. Note that an analogous lemma holds for the foliation $\mathcal{F}^{u}$.
%RRRRRRRRRRRRRRRRRRRRRRRRRRRRRRRRRRRRRRRRRRRRRRRRRRRRRRRRRRRRRRRRRRRRRRRRRRRRRRRRRRRR
\begin{R}\label{R1}
Note that the surface $S_x$ is a $C^0$-surface saturated by the foliation $\mathcal{F}^{s}$, therefore there is a homeomorphism $h\colon[0,1]\times[0,1]\to S_{x}$ such that the horizontal lines $[0,1]\times \eta$ are mapped to the stable sets $W^{s}(y,S_{x})=W^{s}(y)\cap S_{x}$. Therefore, we can define the stable-boundary, $\partial^{s}S_x$, of $S_x$, as being the image of $[0,1]\times\left\{0,1\right\}$ by the homeomorphism $h$ and the unstable-boundary, $\partial^{u}S_x$, of $S_x$ as being the image of $\left\{0,1\right\}\times [0,1]$ by the homeomorphism $h$. 
\end{R}
%RRRRRRRRRRRRRRRRRRRRRRRRRRRRRRRRRRRRRRRRRRRRRRRRRRRRRRRRRRRRRRRRRRRRRRRRRRRRRRRRRRRRR
\indent From now on, unless otherwise stated, we consider  cross-section as the Lemma \ref{Le5}.

\begin{Defi}\label{D1GCS} Let $\Lambda$ be  a closed subset $M$.\\
We said that a compact cross-section $\Sigma$ is a \textbf{Good Cross-Section} (or simply GCS) for $\Lambda$ if 
\begin{center}
$d(\Lambda \cap \Sigma, \partial^{u}\Sigma)>0$
\ \ and \ \
$d(\Lambda \cap \Sigma, \partial^{s}\Sigma)>0$,
\end{center}
where $d$ is the intrinsic distance in $\Sigma$.
\end{Defi}
By compactness of $\Sigma$ and $\Lambda$, the above definition implies that there is $\delta>0$ such that  (cf. Figure \ref{fig:f7}).
\begin{equation}\label{delta-GCS}
d(\Lambda \cap \Sigma, \partial^{u}\Sigma)>\delta
\ \ \text{and} \ \
d(\Lambda \cap \Sigma, \partial^{s}\Sigma)>\delta.
\end{equation}
\begin{figure}[htbp]
	\centering
		\includegraphics[width=0.4\textwidth]{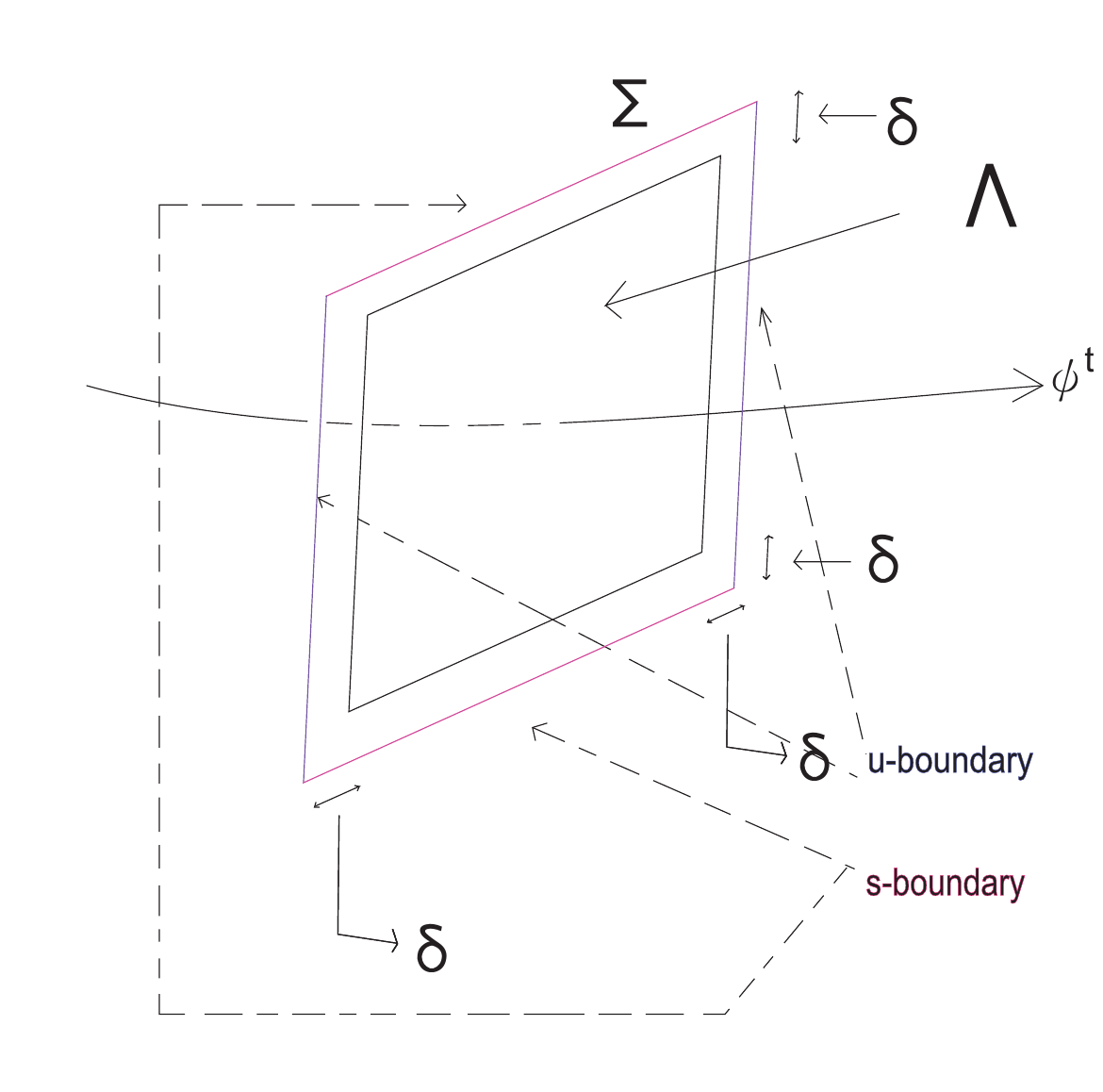}
	\caption{Good Cross-Section}
	\label{fig:f7}
\end{figure}
%\textcolor{red}{PAREI AQUI 11/02/2021 -18:56}\\
The Good Cross Sections  play an important role in reducing Theorem \ref{Theorem 1} to a two-dimensional problem since they allow to enclose the set $\Lambda$ in a tubular neighborhood  far from its boundary.  Moreover
\begin{R}\label{R2'}
Let $\Sigma$ be a GCS,  there are two GCS $\Sigma'$ and $\Sigma''$ such that  $$\Sigma'\subset \emph{int}(\Sigma), \, \, \,  \partial \Sigma'\cap \partial \Sigma=\emptyset \ \ \text{and} \ \ \Sigma\subset \emph{int}(\Sigma''), \,\, \,  \partial \Sigma\cap \partial \Sigma''=\emptyset.$$ Therefore, from now on, we can assume that if two GCS has nonempty intersection, then their interiors have nonempty intersection. 
\end{R}
The Good Cross-Sections may not exist in general, but if  $\Lambda\subsetneqq M$ is a hyperbolic set of a transitive three-dimensional Anosov flow, then  we will prove  that Good Cross Sections always exist (cf. Lemma \ref{Le6}). For this sake, we prove the following lemmas:

%The following lemmas will be used to prove that for any $x\in\Lambda$ there exists Good Cross-Sections which contains $x$ (see lemma \ref{Le6}).
%\noindent Now we prove that for any $x\in\Lambda$ there exists Good Cross-Sections which contains $x$.\\% for some $\delta>0$.\\
%\noindent We use the following result (cf. \cite{A}, \cite{P} and \cite[Chap. 3]{K1} ).% Klingenberg. Riemannian Geometry. Walter de Gruyter, Berlin, 1982.).\\ 
%LLLLLLLLLLLLLLLLLLLLLLLLLLLLLLLLLLLLLLLLLLLLLLLLLLLLLLLLLLLLLLLLLLLLLLLLLLLLLLLLLLLLLLLLLLLLLLLLLLLLLLLLLLLLLLLLLLLLLLLLLLLLL
\begin{Le}\label{L3GCS} Let $\Lambda\subsetneqq M$ a hyperbolic set for a transitive three-dimensional Anosov flow on $M$. Then, for any $x\in \Lambda$ there exist points $x^{+}\notin \Lambda$ and $x^{-} \notin \Lambda$ in distinct connected components of $W^{s}(x)\setminus\left\{x\right\}$.
\end{Le}
%LLLLLLLLLLLLLLLLLLLLLLLLLLLLLLLLLLLLLLLLLLLLLLLLLLLLLLLLLLLLLLLLLLLLLLLLLLLLLLLLLLLLLLLLLLLLLLLLLLLLLLLLLLLLLLLLLLLLLLLLLLLLLL
\begin{proof}
Note that, for three-dimensional Anosov flow, the stable manifold is one-dimensional. 
Let $x\in \Lambda$, by contradiction, assume that there is a segment of the strong stable manifold entirely contained in $\Lambda$ and containing $x$ in the interior, we called by $\zeta$ this segment. Without loss of generality, we can assume that $W^{s}_{loc}(x)\subset \zeta$. Now take $t_{k}$ a sequence such that $t_{k}\to \infty$ as $k\to \infty$. Then, as $\Lambda$ is a compact invariant set, we can assume that $\phi^{-t_{k}}(x)\to y\in \Lambda$ as $k\to \infty$. The point $y$ satisfies:\\
\ \\
{Claim:} $W^{s}(y)\subset \Lambda$. 
\begin{proof}[{\emph{Proof of Claim}}]
Let $z\in W^{s}(y)$, as $W^{s}(y) =\bigcup_{t\geq 0}\phi^{-t}\left(W^{s}_{loc}(\phi^{t}(y)\right)$, then there is $T\geq 0$, such that $\phi^{T}(z)\in W^{s}_{loc}(\phi^{T}(y))$. Then by Stable Manifold Theorem $W^{s}_{loc}(\phi^{T}(y))$ is accumulated by points of $W^{s}_{loc}(\phi^{(-t_{k}+T)}(x))$, for large enough $k$. Let $k$ be sufficiently large such that  $(-t_{k}+T)<0$ and $W^{s}_{loc}(\phi^{(-t_{k}+T)}(x))\subset \phi^{(-t_{k}+T)}(\zeta)\subset \Lambda$, since $\Lambda$ is an invariant set and $\zeta\subset \Lambda$. Hence as $\Lambda$ is closed, we have that $W^{s}_{loc}(\phi^{T}(y)) \subset\Lambda$ which  implies that $z\in \Lambda$, and therefore completes proof of claim.
\end{proof}

\noindent The above claim implies that $\Lambda \supset W^{cs}(y)=\bigcup_{t\in \mathbb{R}}W^{s}(\phi^{t}(y))$. In fact: \\
Let $w\in W^{cs}(y)$, then there is $t_{0}\in \mathbb{R}$ such that $w\in W^{s}(\phi^{t_{0}}(y))$. Hence, there is $T\geq 0$ such that $\phi^{T}(w) \in W^{s}_{\epsilon}(\phi^{T+t_{0}}(y))$. Since  $\phi^{T+r}(w) \in W^{s}_{K\epsilon e^{-\lambda r}}(\phi^{T+r+t_{0}}(y))$ for $r\geq 0$, then we can assume that $T+t_{0}>0$. Thus, $$\phi^{-t_{0}}(w)=\phi^{-(T+t_{0})}(\phi^{T}(w))\in \phi^{-(T+t_{0})}\left(W^{s}_{\epsilon}(\phi^{T+t_{0}}(y))\right)\subset W^{s}(y) \subset \Lambda.$$ Since $\Lambda$ is invariant, then we have that   $W^{cs}(y)\subset \Lambda$.\\ So, the $\phi^t$ is transitive, then $M=\overline{W^{cs}(y)}\subset \Lambda$ (cf. \cite{K}),  which  provides a contradiction. Thus, we concluded the proof of the lemma.
\end{proof}
%%%%%%%%%%%%%%%%%%%%%%%%%%%%%%%%%%%%%%%
\noindent Analogously we have,
%LLLLLLLLLLLLLLLLLLLLLLLLLLLLLLLLLLLLLLLLLLLLLLLLLLLLLLLLLLLLLLLLLLLLLLLLLLLLLLLLLLLLLLLLLLLLLLLLLLLLLLLLLLLLLLLLLLLLLLLLLLLLLLLLLLLL
\begin{Le}\label{L4GCS}Let $\Lambda\subsetneqq M$ a hyperbolic set for a transitive three-dimensional Anosov flow on $M$. Then,
for any $y\in \Lambda$ there are points $y^{+}\notin \Lambda$ and $y^{-}\notin \Lambda$ in distinct connected components of $W^{u}(x)\setminus \left\{x\right\}$.
\end{Le}
%LLLLLLLLLLLLLLLLLLLLLLLLLLLLLLLLLLLLLLLLLLLLLLLLLLLLLLLLLLLLLLLLLLLLLLLLLLLLLLLLLLLLLLLLLLLLLLLLLLLLLLLLLLLLLLLLLLLLLLLLLLLLLLLLLLLL
\begin{proof}
Similar to the proof of  Lemma \ref{L3GCS}. 
\end{proof}
%LLLLLLLLLLLLLLLLLLLLLLLLLLLLLLLLLLLLLLLLLLLLLLLLLLLLLLLLLLLLLLLLLLLLLLLLLLLLLLLLLLLLLLLLLLLLLLLLLLLLLLLLLLLLLLLLLLLLLLLLLLLLLLLLLLLL
\begin{Le}\label{Le6}Let $\Lambda\subsetneqq M$ a hyperbolic set for a transitive three-dimensional Anosov flow on $M$. Then, for every $x\in \Lambda$ there is a Good Cross-Section $\Sigma_x$ at $x$ with $\Sigma_x\subset S_x$.
\end{Le}
%LLLLLLLLLLLLLLLLLLLLLLLLLLLLLLLLLLLLLLLLLLLLLLLLLLLLLLLLLLLLLLLLLLLLLLLLLLLLLLLLLLLLLLLLLLLLLLLLLLLLLLLLLLLLLLLLLLLLLLLLLLLLLLLLLLLL
\begin{proof}
Fix $\epsilon>0$ as in the stable and unstable  manifold theorem, and consider the cross-section $\Sigma_{x}$ given by the Lemma \ref{Le5} containing the segments of $W^{s}_{\epsilon}(x)$ and $W^{u}_{\epsilon}(x)$ and the point $x$ in its interior. By Lemma \ref{L3GCS} and Lemma \ref{L4GCS}, we may find points $x^{\pm}\notin \Lambda$ in each of the connected components of $(W^{s}_{\epsilon}(x)\cap\Sigma_{x})\setminus \{x\}$ and points $z^{\pm}\notin \Lambda$ in each of the connected components of  $(W^{u}_{\epsilon}(x)\cap\Sigma_{x})\setminus\{x\}$. Since $\Lambda$ is closed, there are neighborhoods $V^{\pm}$ of $x^{\pm}$ and $V_{1}^{\pm}$ of $z^{\pm}$ respectively disjoint from $\Lambda$, (cf. Figure \ref{fig:figura2novo}).

\begin{figure}[htbp]
	\centering
		\includegraphics[width=0.3\textwidth]{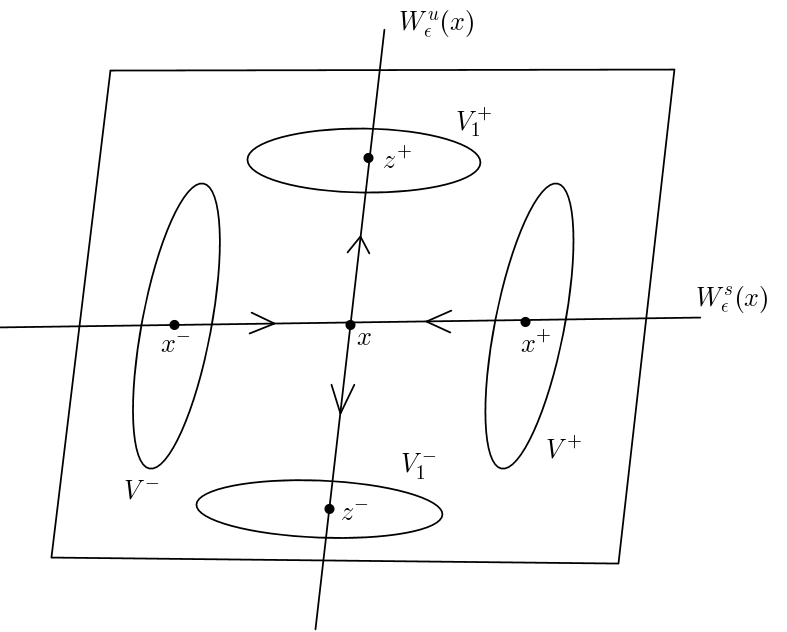}
		\caption{The first step to construct GCS for $x\in \Lambda$}
	\label{fig:figura2novo}
\end{figure}
\ \\
%%%%%%%%%%%%% REFEITO PARA TEMPO POSITIVO. 
In Figure \ref{fig:figura2novo},  $V^{\pm}$, $V_{1}^{\pm}$ may  enclose a region homeomorphic to a square, in this case, there is nothing to be done. Otherwise, we prove that we can obtain open sets in the cross-section which does not intersect $\Lambda$ and enclose a region homeomorphic to a square. Indeed, 
let $t_{k}$ be a sequence such that $t_{k} \rightarrow +\infty$ as $k\rightarrow +\infty$ and $\phi^{t_k}(x)\rightarrow y \in \Lambda$ as $k\rightarrow +\infty$, then by Lemma \ref{L3GCS}, there are $y^{\pm}$ in each of the connected components of $ W^{u}_{\epsilon}(y)$ and $y^{\pm}\notin \Lambda$, so  there are neighborhoods $J^{\pm}$ of $y^{\pm}$, respectively, with $J^{\pm}\cap \Lambda=\emptyset$.\\
%%%%%%%%%%%%%%%%%%%%%%%%%%%%%%%%%%%%%%%%%%%%%%%%%%%%%%%%%%%%%%%%%%%%%%%%%%%%%%%%
%%%%%%%%%%%%%%%%%%%%%%%%%%%%%%%%%%%%%%%%%%%%%%%%%%%%%%%%%%%%%%%%%%%%%%%%%%%%%%%%
The stable and unstable manifold theorem provides the following properties for $y$. 
\begin{itemize}
\item[(i)]There is a  neighborhood  $U_y$ of $y$ such that 
$$W_{\epsilon}^{u}(z)\cap J^{\pm}\neq \emptyset, \, \, \text{for all} \, \, z\in U_y.$$
\item[(ii)] There is $\delta>0$ such that $W^{s}_{\delta}(y^{\pm})\subset J^{\pm}$, respectively. 
\end{itemize}
Note that, there is  $k_0$ such that $\phi^{t_k}(x)\in U_y$, for all $k\geq k_0$. Then by the property (i) and the stable and unstable  manifold theorem (increasing $k_0$, if necessary), we have that there is $w_k^{\pm}\in W_{\epsilon}^{u}(\phi^{t_k}(x))\cap J^{\pm}$ which converge to  $y^{\pm}$, respectively. Then, again by the  stable and unstable manifold theorem and property (ii), there are $\delta_0\leq \dfrac{\delta}{2}$ and $k_{1}\geq k_0$ such that 
\begin{equation}\label{NE2}
W^{s}_{\delta_0}(w_{k}^{\pm})\subset J^{\pm}\, \, \,  \text{and} \, \, \, W^{s}_{\epsilon}(\phi^{-t_k}(w_k^{\pm}))\subset \phi^{-t_k}(W^{s}_{\delta_0}(w_k^{\pm})),
\end{equation}
whenever $k\geq k_1$.
%$$W^{s}_{\delta_0}(w_{k}^{\pm})\subset J^{\pm}\, \,  \text{and} \, \, W^{s}_{\epsilon}(\phi^{-t_k}(w_k^{\pm}))\subset \phi^{-t_k}(W^{s}_{\delta_0}(w_k^{\pm}))\subset \phi^{-t_k}(J^{\pm}) \, \, \text{for all} \, \, k\geq k_1.$$
Observe that $$d(x, \phi^{-t_k}(w_k^{\pm}))=d(\phi^{-t_k}(\phi^{+t_k}(x)), \phi^{-t_k}(w_k^{\pm}))\leq C\lambda^{t_k}d(\phi^{t_k}(x), w_k^{\pm})\leq C\lambda^{t_k}\epsilon.$$
Thus, for $k\geq k_1$ large enough, the last inequality and the stable and unstable theorem implies that $W^{s}_{\epsilon}(\phi^{-t_k}(w_k^{\pm}))$ is $C^0$-close to $W^{s}_{\epsilon}(x)$ and therefore satisfies  $W^{s}_{\epsilon}(\phi^{-t_k}(w_k^{+}))\cap V^{\pm}\neq \emptyset$ and $W^{s}_{\epsilon}(\phi^{-t_k}(w_{k}^{-}))\cap V^{\pm}\neq \emptyset$.

 Moreover, the relation (\ref{NE2}) implies that $W^{s}_{\epsilon}(\phi^{-t_k}(w_k^{\pm})) \subset \phi^{-t_k}(J^{\pm})$ and therefore, since $J^{\pm}\cap \Lambda=\emptyset$, then $W^{s}_{\epsilon}(\phi^{-t_k}(w_k^{\pm}))\cap \Lambda = \emptyset$, for $k\geq k_1$.\\
 Consider  $V_{k}^{\pm}$  a neighborhood of $W^{s}_{\epsilon}(\phi^{-t_k}(w_k^{\pm}))$, respectively, such that  $V_{k}^{\pm}\cap \Lambda=\emptyset$.  As we know that $\phi^{-t_k}(w_k^{\pm})\in W^{u}_{\epsilon}(x)$, then  $V^{\pm}\cap \Sigma_x$ and $V_{k}^{\pm}\cap \Sigma_x$ enclose a region homeomorphic to a square and such region contains a Good Cross-Section $\Sigma_k:=\Sigma_x$, as we wish. 

\end{proof}

\begin{R}\label{R3}
It is worth note that, the stable boundary of \, $\Sigma_{k}$, $\partial^{s}\Sigma_k$, is equal to  $W^{s}_{\epsilon}(\phi^{-t_k}(w_k^{\pm}))$, which  converge $($in the $C^0$-topology$)$ to $W^{s}_{\epsilon}(x)$. Thus, we can state that the cross- section of the above lemma can be taken such that the stable boundary  as close as you want to   $W^{s}_{\epsilon}(x)$. Similarly, the Good Cross-Section of the above lemma can be constructed using unstable saturation. 
\end{R}

\noindent This kind of cross-section has good properties. Before showing  the properties of such sections, remember that they  are $C^{0}$-sections, so we will need some definitions.% to $C^0$ a definition of $C^{0}$-transversal intersection of a continuous curve and a foliation.  
\begin{Defi}\label{D2}
We say that a continuous curve $\xi\subset \re^2$ is $\theta$-transverse in neighborhood of radius $r$ to one-dimensional foliation $\mathcal{F}$ (with $C^{1}$-leaves) in $\re^2$, if for any $z\in \xi\cap \mathcal{F}_{z}$ $($here $\mathcal{F}_{z}$ is the leaf containing $z$\,$)$ there is a cone $C$ with vertex at the point $z$ such that $\xi\cap B(z,r)\subset C$ and the angle $\angle(v, T_{z}\mathcal{F}_{z})\geq \theta$ for every tangent vector $v$ at the point $z$ contained in the cone $C$. 
\end{Defi}

\noindent As the section $\Sigma_x$ is saturated by the foliation $\mathcal{F}^{s}$, then we call $h_{x}$ the homeomorphism given in Remark \ref{R1}, then we have the following definition: 
\begin{Defi}\label{D3}
We say that a continuous curve $\zeta\subset \Sigma_x$ is transverse to foliation $\mathcal{F}^{s}$, if there are $\theta$ and $r$ such that \, $h_{x}^{-1}(\zeta)$ is $\theta$-transverse in a neighborhood of radius $r$ to the foliation $\{h_x^{-1}(\mathcal{F}^{s}(z)\cap \Sigma_x): z\in \mathcal{F}_{loc}^{u}(x)\}$. %In this case we say simply that $\zeta\subset \Sigma_x$ is transverse to $\mathcal{F}^{s}$. 
\end{Defi}

%PPPPPPPPPPPPPPPPPPPPPPPPPPPPPPPPPPPPPPPPPPPPPPPPPPPPPPPPPPPPPPPPPPPPPPPPPPPPPPPPPP
\begin{Pro}\label{P3}
Given $x,y \in \Lambda$, such that there is a $C^{0}$-curve $\zeta\subset \emph{int}\,\Sigma_{x}\cap \emph{int}\,\Sigma_{y}$. If $\zeta$  intersects transversely to foliation $\mathcal{F}^{s}$, then $\emph{int}\Sigma_{x}\, \cap \,\emph{int}\Sigma_{y}$ is an open set of $\Sigma_{x}$ and $\Sigma_{y}$.
\end{Pro}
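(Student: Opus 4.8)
The plan is to work in the two square models $h_x\colon \Sigma_x\to[0,1]^2$ and $h_y\colon\Sigma_y\to[0,1]^2$ and to exploit the fact that in both models the stable foliation $\mathcal{F}^s$ becomes the horizontal foliation. First I would reduce to a local statement: fix a point $p\in\zeta\subset int(\Sigma_x)\cap int(\Sigma_y)$ and show that $int(\Sigma_x)\cap int(\Sigma_y)$ contains a neighborhood of $p$ in $\Sigma_x$ (and, symmetrically, in $\Sigma_y$); since $p$ was arbitrary in an open subset of each section, this gives the claim. Because both $\Sigma_x$ and $\Sigma_y$ are saturated by $\mathcal{F}^s$-leaves (by the construction in Lemma \ref{Le5} and Remark \ref{R1}), the leaf $W^{ss}(p)$ through $p$ lies in both sections, at least locally; so along the ``horizontal'' direction the two sections already agree near $p$. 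What remains is to produce agreement in the transverse (``vertical'') direction, and this is exactly where the hypothesis that $\zeta$ is transverse to $\mathcal{F}^s$ is used: $\zeta$ provides a curve through $p$, contained in both sections, which crosses the horizontal foliation with a definite angle $\ge\theta$ in a neighborhood of radius $r$ (Definitions \ref{D2} and \ref{D3}).

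Next I would combine these two ingredients. Near $p$, the section $\Sigma_x$ (viewed in $\re^3$) is the union of the local strong-stable leaves through the points of a short arc of $\zeta$; the same is true of $\Sigma_y$. Since the strong-stable leaves are the leaves of the single foliation $\mathcal{F}^s$ of $M$ (they do not depend on the section), and the transversal arc of $\zeta$ is common to both, the two saturations coincide on a neighborhood of $p$: any point $q$ near $p$ lies on a unique short $\mathcal{F}^s$-leaf, that leaf meets $\zeta$ in a unique point close to $p$ (here the $\theta$-transversality of $\zeta$ to $\mathcal{F}^s$, together with the cone/transversality condition making $\Sigma_x$ an embedded $C^0$ surface transverse to the flow, guarantees the intersection exists, is unique, and depends continuously on $q$), hence $q\in\Sigma_x$ and $q\in\Sigma_y$. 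Thus a whole $\Sigma_x$-neighborhood of $p$ lies in $\Sigma_y$, and vice versa, so $int(\Sigma_x)\cap int(\Sigma_y)$ is open in each of $\Sigma_x$ and $\Sigma_y$.

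To make the continuity-and-uniqueness step rigorous I would pass to the square coordinates: in $h_x$-coordinates the arc $h_x(\zeta)$ is $\theta$-transverse to the horizontal foliation on a ball of radius $r$, so it is the graph of a function over the vertical axis (after possibly shrinking), and the $\mathcal{F}^s$-saturation of this graph is an $h_x$-neighborhood of $h_x(p)$; the analogous description holds in $h_y$-coordinates; and the transition between the two descriptions is governed by the holonomy of $\mathcal{F}^s$, which is a homeomorphism. The main obstacle is precisely this last point: because $\mathcal{F}^s$ and $\mathcal{F}^u$ are only $C^0$ foliations (not $C^1$, since we are not in the geodesic-flow setting), one cannot use transversality in the smooth sense, and must instead argue entirely with the $C^0$ transversality of Definition \ref{D2} — controlling cones, the angle bound $\theta$, and the radius $r$ uniformly — to ensure that the $\mathcal{F}^s$-leaves through nearby points genuinely fill an open set and meet $\zeta$ in exactly one point. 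Handling this carefully, much as in the separation Lemma \ref{L7} alluded to in the introduction, is the heart of the argument; the rest is bookkeeping with the two square charts.
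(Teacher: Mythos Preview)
Your proposal is correct and takes essentially the same approach as the paper: both use that $\Sigma_x$ and $\Sigma_y$ are $\mathcal{F}^s$-saturated to conclude that the strong-stable leaf through each $z\in\zeta$ lies locally in both sections, so the $\mathcal{F}^s$-saturation $B=\bigcup_{z\in\zeta}W^s_\delta(z)$ is contained in $int(\Sigma_x)\cap int(\Sigma_y)$ and, by transversality of $\zeta$ to $\mathcal{F}^s$, is open in each. The paper's proof is just the one-line version of your argument --- it records that for $z\in\zeta$ the leaves $W^s(x')$ and $W^s(y')$ (with $x'\in W^{uu}_\epsilon(x)$, $y'\in W^{uu}_\epsilon(y)$) coincide and then writes down $B$ --- omitting the square-chart justification for openness that you spell out.
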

%PPPPPPPPPPPPPPPPPPPPPPPPPPPPPPPPPPPPPPPPPPPPPPPPPPPPPPPPPPPPPPPPPPPPPPPPPPPPPPPPPPPPPPPP
\begin{proof}
Since $\zeta\subset \text{int}\,\Sigma_{x}\cap \text{int}\,\Sigma_{y}$ a $C^{0}$-curve transverse to $\mathcal{F}^{s}$. Then for all $z\in \zeta$, there are $x^{\prime} \in W^{u}_{\epsilon}(x)$ and $y^{\prime} \in W^{u}_{\epsilon}(y)$ such that $z\in W^{s}(x^{\prime})\cap \Sigma_{x}$ and $z\in W^{s}(y^{\prime})\cap \Sigma_{y}$, thus  $W^{s}(x^{\prime})=W^{s}(y^{\prime})$. Therefore, there is $\delta>0$ such that the set 
\begin{center}
$\ds B=\bigcup_{z\in \zeta}W^{s}_{\delta}(z)\subset \text{int}\,\Sigma_{x} \cap \text{int}\,\Sigma_{y}$.
\end{center}
Thus, we concluded the proof of proposition.
\end{proof}
\subsubsection{Separation of GCS}\label{Separation of GCS}
%%%%%%%%%%%%%%%%%%%%%%%%%%%%%%%%%%%%%%%%%%%%%%%%%%%%%%%%%%%%%%%%%%
%%%%%%%%%%%%%%%%%%%%%%%%%%%%%%%%%%%%%%%%%%%%%%%%%%%%%%%%%%%%%%%%%%%%%

\noindent By Lemma \ref{Le6}, at each point of $x\in \Lambda$, we can find a Good Cross-Section $\Sigma_{x}$. Since $\Lambda$ is a compact set, then for $\gamma>0$, there are a finite number of points $x_{i}\in \Lambda$, $i=1,\dots,l$ such that 
\begin{equation}\label{E6GCS}
\ds \Lambda\subset  \bigcup^{l}_{i=1}\phi^{(-\gamma,\gamma)}(\text{int}\,{\Sigma_{i}}):=\bigcup^{l}_{i=1}U_{\Sigma_{i}},
\end{equation}
%\bigcup^{l}_{i=1}\phi^{(-\frac{\gamma}{2},\frac{\gamma}{2})}(int{\Sigma_{i}})\subset
where $\Sigma_{i}:=\Sigma_{x_i}$.\\

The main goal of this section is the following lemma, which has  very technical proof that will be presented in the Appendix \ref{App - Separation}.

\begin{Le}\label{L7}
There is $m\in \mathbb{N}$ and GCS $\widetilde{\Sigma}_{i}$, $i=1,\dots, m$ such that 

\begin{equation}\label{eq6}
\ds\Lambda\subset \bigcup^{m}_{i=1}\phi^{(-2\gamma,2\gamma)}(\emph{int}\,\widetilde{\Sigma}_i)
\end{equation}
with $\widetilde{\Sigma}_i\cap \widetilde{\Sigma}_j=\emptyset$.
\end{Le}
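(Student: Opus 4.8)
The plan is to begin from the finite flow-box cover (\ref{E6GCS}) and to remove overlapping pairs of cross-sections one at a time by the two operations already available — a small time-translation (Lemma \ref{L7'0}) and a subdivision into finitely many thinner GCS, some of which are flowed slightly (Lemma \ref{L9GCS}) — maintaining throughout the following invariants: the current finite family of GCS still covers $\Lambda$ by flow-boxes whose time-window stays inside $(-2\gamma,2\gamma)$; no new overlapping pair is ever produced; and every section in the family is still a GCS. The organisation of the induction below and the verification that the GCS property survives each operation are carried out as in \cite[Lemmas 7, 9 and 10]{RM2}; the genuinely new point, forced by the fact that here $\mathcal{F}^{s}$ and $\mathcal{F}^{u}$ are only $C^{0}$, is that two sections may meet as in case~1 or case~2ii) of the trichotomy above, and those configurations are exactly what Lemmas \ref{L7'0}, \ref{L8GCS} and \ref{L9GCS} are for.

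Concretely, for a single separation step I would fix a pair $\Sigma_i,\Sigma_j$ with $\Sigma_i\cap\Sigma_j\neq\emptyset$ and, using Remark \ref{R2'}, assume their interiors meet. If the pair is in case~2ii), Lemma \ref{L7'0} yields a small $t'$ with $\phi^{t'}(\Sigma_i)$ and $\Sigma_j$ in case~2i); a case~1 (open) component of the intersection is disposed of by a small time-translation as well, or by the argument of \cite[Lemmas 7, 9, 10]{RM2}. So one may assume the pair is in case~2i): then $\Sigma_i\cap\Sigma_j$ is a union of leaves of $\mathcal{F}^{s}\cap\Sigma_i$ and, by the compactness claim preceding Lemma \ref{L8GCS}, $\pi^{s}_i(\Sigma_i\cap\Sigma_j)$ is compact, hence covered by finitely many intervals $\widetilde{I}_{x^{1}},\dots,\widetilde{I}_{x^{m}}\subset W^{u}_{\epsilon}(x_i)$. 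Applying Lemma \ref{L9GCS} replaces $\Sigma_i$ by the $2m+1$ pairwise disjoint GCS $\phi^{\delta}(\widetilde{\Sigma}_{x^{1}}),\dots,\phi^{\delta}(\widetilde{\Sigma}_{x^{m}})$ together with the complementary pieces $\Sigma\in\Sigma_i^{\#}$; since $\pi^{s}_i(\Sigma_i\cap\Sigma_j)\subset\bigcup_r\widetilde{I}_{x^{r}}$, \emph{all} of these are disjoint from $\Sigma_j$, so the overlap between $\Sigma_i$ and $\Sigma_j$ is gone. The covering of $\Lambda$ is preserved by item~3 of Lemma \ref{L9GCS}, which replaces $\phi^{(-\gamma,\gamma)}(int\,\widetilde{\Sigma}_{x^{r}})$ by $\phi^{(-\gamma,\gamma)}(\phi^{\delta}(int\,\widetilde{\Sigma}_{x^{r}}))\subset\phi^{(-2\gamma,2\gamma)}(int\,\widetilde{\Sigma}_{x^{r}})$ for $\delta<\gamma$.

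To assemble these steps into a proof I would run the separations in lexicographic order over pairs $(i,j)$ with $i<j$: at the start of stage $(i,j)$ every section descended from $\Sigma_{1},\dots,\Sigma_{i-1}$ is already disjoint from every other section of the current family, and every section descended from $\Sigma_{i}$ is already disjoint from $\Sigma_{i+1},\dots,\Sigma_{j-1}$; one then applies the step above to each current descendant $P$ of $\Sigma_{i}$ against $\Sigma_{j}$. The flowed pieces produced do not meet any section already treated, because those sections sit at a positive distance from $P$ by compactness and the time-shift can be chosen far smaller than that distance — this is what keeps the second invariant — and, the shift being arbitrarily small at each of the finitely many stages, the accumulated shifts stay below $\gamma$, so the time-window never leaves $(-2\gamma,2\gamma)$. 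After the last stage $(l-1,l)$ the resulting family, of some cardinality $m(l)$, is pairwise disjoint and still covers $\Lambda$, which is (\ref{eq6}).

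The main obstacle, and the reason this is harder than the $C^{1}$ geodesic-flow case of \cite{RM2}, is that the cross-sections are merely $C^{0}$: two of them can share a stable leaf or an entire open patch, so the elementary transversality that makes intersections of $C^{1}$ sections finite is unavailable, and cases~1 and 2ii) genuinely occur. Within the argument the most delicate tasks are to carry out each subdivision so that the new pieces are again $\delta$-Good Cross-Sections — using the freedom in Remark \ref{R3} to take the auxiliary sets $V^{\pm}_{p_x}$ long and close to the local stable leaf, and Remark \ref{R2'} to shrink sections while keeping their boundaries off $\Lambda$ — and to keep the successive time-translations within the available budget $2\gamma-\gamma=\gamma$ while never reintroducing an overlap, which is precisely what forces the induction to terminate.
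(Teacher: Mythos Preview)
Your proposal is correct and follows essentially the same approach as the paper: the paper's own proof is nothing more than the sentence ``This last Lemma together with the proof of \cite[Lemma 7, Lemma 9, Lemma 10]{RM2}'', and your write-up is a faithful expansion of precisely that strategy --- reduce case~2ii) to case~2i) via Lemma \ref{L7'0}, handle case~2i) by the subdivision-and-flow of Lemma \ref{L9GCS}, and organise the finitely many separation steps as in \cite{RM2}, keeping the cumulative time-shift below $\gamma$ so the window stays in $(-2\gamma,2\gamma)$. The only point where you are slightly loose is the treatment of a case~1 (open) overlap --- a single small time-translation need not kill it --- but the same compactness-of-$\pi^{s}_i(\Sigma_i\cap\Sigma_j)$ plus subdivision argument of Lemma \ref{L9GCS} applies verbatim there, which is why the paper does not single it out either.
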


The above lemma state that the Good Cross-Sections in (\ref{E6GCS}) can be taken pairwise disjoint and therefore  reduce our problem to the study the  Lagrange and Markov spectra of the  map of first return (Poincar\'e map) on the union of these sections (see Section \ref{Hyp of PM} and Section \ref{RS}).

\begin{R}\label{R11}
Since $C^{\infty}$-topology is dense in $C^{0}$-topology, 
from now on, we can assume without loss of generality,  that there are $C^{\infty}$-GCS, $\Sigma_{i}$, pairwise disjoint which satisfies the  \emph{Lemma \ref{L7}}. 

%the  sections $\widetilde{\Sigma}_i$ given in the \emph{Lemma \ref{L7}} are $C^{0}$-GCS and  $C^{\infty}$ is dense in $C^{0}$, 
\end{R}

We ended this section by announcing an immediate consequence of  Lemma \ref{L7} and the definition of GCS, which will be used in Section \ref{Sec 4.3} to construct a basic set for  geodesic flows in pinched negative curvature. 
\begin{C}\label{C3}
Any hyperbolic set of a three-dimensional transitive Anosov flow  has topological dimension $1$.
\end{C}
%%%%%%%%%%%%%%%%%%%%%%%%%%%%%%%%%%%%%%%%%%%%%%%%%%%%%%%%%%%%%%%%%%%%
%\textcolor{red}{PAREI AQUI 01/10/2020 AS 00:46}
%%%%%%%%%%%%%%%%%%%%%%%%%%%%%%%%%%%%%%%%%%%%%%%%%%%%%%%%%%%%%%%%%%%%%%
\subsection{Poincar\'e Map}\label{PM}
%%%%%%%%%%%%%%%%%%%%%%%%%%%%%%%%%%%%%%%%%%%%%%%%%%%%%%%%%%%%%%%%%%%%%%
%%%%%%%%%%%%%%%%%%%%%%%%%%%%%%%%%%%%%%%%%%%%%%%%%%%%%%%%%%%%%%%%%%%%%%%%%%%%%%%%
%Without loss of generality, we call $\Sigma_i'\,s$ the sections given in the Lemma \ref{L7}.

 Let $\Xi=\bigcup_{i=1}^{m} \Sigma_{i}$ be a finite union of cross-sections to the flow $\phi^{t}$ given by  Remark \ref{R11}, which are pairwise disjoint. Sometimes, abusing of notation,  we consider $\Xi=\{\Sigma_1,\cdots, \Sigma_l\}$. Let ${\cR}\colon \Xi \to \Xi$ be a Poincar\'e map, that is, the map of first return to $\Xi$, ${\cR}(y)=\phi^{t_{+}(y)}(y)$, where $t_{+}(y)$ corresponds to the first time that the positive orbits of $y\in \Xi$ encounter $\Xi$. 

\subsubsection{Hyperbolicity of Poincar\'e Map}\label{Hyp of PM}
The hyperbolicity on $\Lambda$ induces hyperbolicity of $\Lambda\cap \Xi$. More precisely, if we denote by $\Delta:=\bigcap_{n\in \mathbb{Z}}{\cR}^{-n}(\Xi))$, then 
\begin{Le}\label{LHPM}
The set $\Lambda\cap \Xi$ is hyperbolic for ${\cR}$ and satisfies 
$$\Lambda\cap \Xi\subset  \Delta.$$
\end{Le}

\noindent We used some arguments find at  \cite[ch. 6]{VP} to do the proof of Lemma \ref{LHPM}, which will be presented in  Appendix \ref{PHPM}. 

%%%%%%%%%%%%%%%%%%%%%%%%%%%%%%%%%%%%%%%%%%%%%%%%%%%%%%%%%%%%%%%%%%%%%%%%%%%%%%%%%%%%%%%%%%%%%%%%%%%%%%%%%%%
%%%%%%%%%%%%%%%%%%%%%%%%%%%%%%%%%%%%%%%%%%%%%%%%%%%%%%%%%%%%%%%%%%%%%%%%%%%%%%%%%%%%%%%%%%%%%%%%%%%%%%%%%%%%%%%%%%%%%
\subsubsection{Hausdorff Dimension of Hyperbolic set of $\mathcal{R}$}\label{SEC1.5GCS}

%%%%%%%%%%%%%%%%%%%%%%%%%%%%%%%%%%%%%%%%%%%%%%%%%%%%%%%%%%%%%%%%%%%%%%%%%%%%%%%%%%%%%%%%%%%%%%%%%%%%%%%%%
%%%%%%%%%%%%%%%%%%%%%%%%%%%%%%%%%%%%%%%%%%%%%%%%%%%%%%%%%%%%%%%%%%%%%%%%%%%%%%%%%%%%%%%%%%%%%%%%%%%%%%%%%%%%%%%%%%%%%
In this section, we estimate the Hausdorff dimension of $\Lambda$ using the Hausdorff dimension of $\Delta$.
%Now we will estimate  $HD(\Delta:=\bigcap_{n\in \mathbb{Z}}{\cR}^{-n}(\Xi))$.
%LLLLLLLLLLLLLLLLLLLLLLLLLLLLLLLLLLLLLLLLLLLLLLLLLLLLLLLLLLLLLLLLLLLLLLLLLLLLLLLLLLLLLLLLLLLLLLLLLLLLLLLLLLLLLLL
\begin{Le}\label{L13GCS}
The set $\Lambda$ satisfies $$\Lambda\subset\bigcup_{t\in\mathbb{R}}\phi^{t}\Big( \bigcap_{n\in \mathbb{Z}}{\cR}^{-n}(\Lambda\cap \Xi ) \Big )=\bigcup_{t\in\mathbb{R}}\phi^{t}(\Lambda\cap \Xi)
\subset\bigcup_{t\in\mathbb{R}}\phi^{t}(\Delta).$$
\end{Le}
%LLLLLLLLLLLLLLLLLLLLLLLLLLLLLLLLLLLLLLLLLLLLLLLLLLLLLLLLLLLLLLLLLLLLLLLLLLLLLLLLLLLLLLLLLLLLLLLLLLLLLLLLLLLLLLL
\begin{proof}
Remember that $\ds\Lambda\subset \bigcup^{l}_{i=1}U_{\Sigma_{i}}$, where $U_{i}=\phi^{(-2\gamma,2\gamma)}(\text{int}\, \Sigma_{i})$. Let $z\in \Lambda$, then there is $t_{z}$ such that $z=\phi^{t_{z}}(x)$ with $x\in \text{int}\,\Sigma_{i}$ for some $i$. This implies that $x\in \Lambda\cap \Xi$ and therefore, ${\cR}(x)\in \text{int} (\Sigma_{j})$ for some $j$, so  ${\cR}(x)\in \text{int}\, (\Xi)$. Analogously, ${\cR}^{n}(x) \in \text{int}\,(\Xi)$, \emph{i.e.}, $\mathcal{R}^{n}(x)\in \Lambda\cap \Xi$ for all $n\in\mathbb{Z}$. Hence, $x\in \bigcap_{n\in \mathbb{Z}}{\cR}^{-n}(\Lambda \cap \Xi)$, therefore $z\in \phi^{t_z}\left(\bigcap_{n\in \mathbb{Z}}{\cR}^{-n}(\Lambda \cap \Xi)\right)$.
\end{proof} 
\begin{Le}\label{L14GCS}
The Hausdorff dimension of $\Lambda\cap \Xi$ and  $\Delta$ satisfies, $$HD(\Delta)\leq  HD\left(\Lambda \cap \Xi)\right)+1.$$
%and thus $HD(\Delta)\geq HD(\Lambda\cap\Xi)>1.$
\end{Le}
\begin{proof}
Take a \emph{bi}-infinite sequence $$\cdots<t_{-k}<t_{-k+1}<\cdots<t_{0}<t_{1}<\cdots t_{k}<\cdots$$ such that $\left|t_{k}-t_{k+1}\right|<\alpha$ with $\alpha$ sufficiently small, then
\begin{center}
$\Lambda \subset \bigcup^{+\infty}_{k=-\infty}\phi^{\left[t_{k},t_{k+1}\right]}(\Lambda\cap \Xi):=\bigcup^{+\infty}_{k=-\infty}A_{k}.$
\end{center}
Then, $HD(\Lambda)\leq \sup_{k}HD(A_{k})$. Moreover, if  $\alpha$ is small enough, the map 
\begin{eqnarray*}
\psi_{k}:&\left(\Lambda\cap \Xi \right)\times\left[t_{k},t_{k+1}\right]&\longrightarrow A_{k} \ \ \text{defined  by} \\
&(x,t)&\longmapsto\phi^{t}(x)
\end{eqnarray*}
is Lipschitz. Therefore, if we call $I_{k}=\left[t_{k},t_{k+1}\right]$, it is  easy to see that 
$$HD(A_{k})\leq HD\left((\Lambda\cap\Xi)\times I_{k}\right)\leq HD\left(\Lambda\cap\Xi\right)+D(I_{k}),$$ 
where $D$ is an upper box-counting dimension of $I_{k}$. It is easy to see that  $D(I_{k})=1$ (cf. \cite{Falconer}).
Thus,
$$HD(\Lambda)\leq \sup_{k}\ HD(A_{k})\leq HD\left( \Lambda\cap\Xi\right) + 1.$$
\end{proof}
\begin{C}\label{C-L13GCS}
If $HD(\Lambda)>2$, then  $HD(\Lambda\cap\Xi)>1$.
\end{C}
%%%%%%%%%%%%%%%%%%%%%%%%%%%%%%%%%%%%%%%%%%%%%%%%%%%%%%%%%%%%%%%%%%%%%%%%%%%%%%%%%%%%%%%%%%%%%%%%%%%%%%%%%%%%%%%%%%%%%%%%%%%%%%%%%%%%
%\textcolor{blue}{PAREI AQUI- 28-01-2021 - 18:26}

\section{Lagrange and Markov Spectrum}\label{LMS}
%%%%%%%%%%%%%%%%%%%%%%%%%%%%%%%%%%%%%%%%%%%%%%%%%%%%%%%%%%%%%%%%%%%%%%%%%%%%%%%%%%%%%%%%%%%%%%%%%%%%%%%%%%%%%%%%%%%%%%%%%%%%%%%%%%%%%%%%%%%%%%%%%%%%%%%%%%%
In this section, we prove the Theorem \ref{Theorem 1}. In this direction, we will prove an equivalent version (Theorem \ref{Theorem 2}), which reduces the problem to find non-empty interior for the Lagrange and Markov spectrum for discrete dynamical systems in dimension two. 
%%%%%%%%%%%%%%%%%%%%%%%%%%%%%%%%%%%%%%%%%%%%%%%%%%%%%%%%%%%%%%%%%%%%%%%%%%%%%%%%%%%%%%%%%%%%%%%%%%%%%%%%%%%%%%%%%%%%%%%%%%%%%
\subsection{Regaining the Spectrum}\label{RS}
%%%%%%%%%%%%%%%%%%%%%%%%%%%%%%%%%%%%%%%%%%%%%%%%%%%%%%%%%%%%%%%%%%%%%%%%%%%%%%%%%%%%%%%%%%%%%%%%%%%%%%%%%%%%%%%%%%%%%%%%%%%%%%%%%%%%%%%%%%%%%%%%%%%%%%%%%%%%%

%Remember that we are interested in studying the Lagrange and Markov spectra on $M$, but in the above sections we constructed a set of cross sections $\Xi$ and a Poincar\'e map on $\Xi$. 
%Then, we use similar arguments to those presented at \cite{RM} to show that the Lagrange and Markov  Spectra  has non-empty interior for typical $C^1$-real functions on the section $\Xi$ and with these functions regain the spectrum on $M$. \\
%\textcolor{red}{PAREI AQUI- 02-10-2020 - 02:28}
\indent The dynamical Lagrange and Markov spectra of $\Lambda$ and $\Delta$ are related in the following way.
Given a function $F\in C^{s}(M,\mathbb{R})$, $s\geq 1$, let us denote by $f =\textrm{max}F_{\phi}\colon D_{\cR}\to\re$ the function
$$\textrm{max} F_{\phi}(x):=\max_{0\leq t \leq t_{+}(x)}F(\phi^{t}(x)),$$
where $D_{\cR}$ is the domain of $\cR$ and $t_{+}(x)$ is such that $\cR(x)=\phi^{t_{+}(x)}(x)$.\\
It is not difficult to show that 
$$\limsup_{n\to +\infty}f(\cR^n(x))=\limsup_{t\to + \infty}F(\phi^t(x))$$
and 
$$\sup_{n\in \mathbb{Z}}f(\cR^n(x))=\sup_{t\in \re}F(\phi^t(x))$$
for all $x\in \Delta$. In particular,

\begin{equation}\label{Continuos and Discrete}
L(\phi, \Lambda, F)=L(\cR, \Delta, f) \ \ \text{and} \ \ M(\phi, \Lambda, F)=M(\cR, \Delta, f).
\end{equation}

\begin{R}\label{RND}
$f =\textrm{max}F_{\phi}$ might not be $C^1$ in general.
\end{R}
\begin{R}
It is worth noting that given a vector field $X$ close to $\phi$, the  Poincar\'e map $\cR_{X}$ of the flow of $X$  is 
 defined in the same cross-sections where $\cR$ is defined. 
\end{R}
Thus, the relations (\ref{Continuos and Discrete}) reduces Theorem \ref{Theorem 1}  to the following theorem:
%%%%%%%%%%%%%%TTTTTTTTTTTTTTTTTTTTTTTTTTTTTTTTTTTTTTTTTTTTTTTT
%%%%%%%%%%%%%%TTTTTTTTTTTTTTTTTTTTTTTTTTTTTTTTTTTTTTTTTTTTTTTT

%%%%%%%%%%%%%%TTTTTTTTTTTTTTTTTTTTTTTTTTTTTTTTTTTTTTTTTTTTTTTT
\begin{T}\label{Theorem 2}Let $\phi$ be a vector field, such that ${\phi}^{t}$ is a conservative Anosov flow, which has a basic set $\Lambda$ with Hausdorff dimension bigger than $2$, then $C^2$-arbitrarily close to $\phi$ there is an open set $\mathcal{W}\subset \mathfrak{X}_{w}^{2}(M)$, such that for any $X\in \mathcal{W}$, if $\Delta_{X}$ is the  hyperbolic continuation of  $\Delta$ by the Poincar\'e map $\cR_X$, one can find a dense and  $C^{2}$-open subset  $\mathcal{U}_{X,\Lambda}\subset C^{2}(M,\re)$, so that 
$$\emph{int}\, M(\cR_{X},\Delta_{X}, \textrm{max} F_{X})\neq \emptyset \, \, \text{and}\, \,  \, \emph{int}\, L(\cR_{X},\Delta_{X}, \textrm{max} F_{\phi})\neq \emptyset,$$
whenever $F\in \mathcal{U}_{X,\Lambda}$. Moreover, the above statement holds persistently, \emph{i.e.}, for any $Y\in \mathcal{W}$, it holds for any $(F,X)$ in a suitable neighborhood of $\mathcal{U}_{Y,\Lambda}\times \{Y\}$ in $C^{2}({M,\re})\times \mathfrak{X}_{w}^{2}(M)$. 
\end{T}
%%%%%%%%%%%%%%TTTTTTTTTTTTTTTTTTTTTTTTTTTTTTTTTTTTTTTTTTTTTTTT
%%%%%%%%%%%%%%TTTTTTTTTTTTTTTTTTTTTTTTTTTTTTTTTTTTTTTTTTTTTTTT
%%%%%%%%%%%%%%%%%%%%TTTTTTTTTTTTTTTTTTTTTTTTTTTTTTTTTTTTTTTTTTTT

%%%%%%%%%%%%%%%%%%%%%%%%%%%%%%%%%%%%%%%%%%%%%%%%%
%%%%%%%%%%%%%%%%%%%%%%%%%%%%%%%%%%%%%%%%%%%%%%%%
\subsection{Family of Perturbations}\label{Family of Pert}
%%%%%%%%%%%%%%%%%%%%%%%%%%%%%%%%%%%%%%%%%%%%%%%
%%%%%%%%%%%%%%%%%%%%%%%%%%%%%%%%%%%%%%%%%%%%%%%%
\noindent In Section \ref{SL} has been proven that there is a finite number of $C^{\infty}$-GCS, $\Sigma_{i}$ pairwise disjoint and  such that
the Poincar\'e map $\cR \colon \Xi\to \Xi,$ (first return to $\ds \Xi$)
where $\ds \Xi:=\cup^{l}_{i=1}\Sigma_{i}$  satisfies:
\begin{itemize}
 \item $\bigcap_{n\in \mathbb{Z}}\mathcal{R}^{-n}(\Xi):=\Delta$ is a basic set for $\cR$, since $\Lambda$ is a basic set for $\phi^t$.
\item If $HD(\Lambda)>2$, then $\ds HD\left(\Delta\right)> 1$,
%\item $\Delta$ is a basic set for $\cR$, since $\Lambda$ is a basic set for $\phi^t$.
\end{itemize}

The main goal of this section is to construct a family of perturbations of $\phi$, which produces  perturbations on $\cR$ so that we can  apply the techniques of \cite{RM} (cf. Appendix \ref{SIRCS} and \cite{MY}).
%%%%%%%%%%%%%%%%%%%%%%%%%%%%%%%%%%%%%%%%%%%%%%%%%%%%%%%%%%%
\begin{R}\label{R_pert}
From now on, we will consider vector fields $X\in \mathfrak{X}^{2}_{\omega}(M)$, $C^2$-sufficiently close to $\phi$ such that: If we denote  by $\cR_{X} \colon \Xi\to \Xi$ the Poincar\'e map associated to $X$, then 
\begin{enumerate}
\item[$1.$] There exists the hyperbolic continuation $\Delta_{X}$  of $\Delta$ by the map  $\cR_{X}$.
\item[$2.$] $HD(\Delta_{X}) > 1$, since  the Hausdorff dimension of the basic sets, is continuous for  $C^2$-diffeomorphisms on $\Xi$ \emph{(cf. \cite{PT})}.
\end{enumerate} 
\end{R}
%%%%%%%%
%%%%%%%%%%%%%%%%%%%%%%%%%%%%%%%%%%%%%%%%%%%%%%%%%%%
\subsubsection{First Perturbation for The Birkhoff Invariant}\label{SBI}
%%%%%%%%%%%%%%%%%%%%%%%%%%%%%%%%%%%%%%%%%%%%%%%%%%%%%%%%%%%
%%%%%%%%%%%%%%%%%%%%%%%%%%%%%%%%%%%%%%%%%%%%%%%%%%%%%%%%%%%
\indent Since the flow is conservative, then the Poncar\'e map $\mathcal{R}$  is a conservative diffeomorphism. Thus, to describe the family of perturbations of $\cR$ given in \cite{MY} to apply the techniques of \cite{RM},  we need that the Birkhoff invariant will be non-zero in a periodic point of $\cR$ (cf. Appendix \ref{BI}  and \cite[Section 4.3]{MY1}).
%\begin{R}
%It is worth to note that: given a vector field $X$ close to $\phi$, then the flow of $X$ still defines a Poincar\'e
%map $\cR_{X}$ defined in the same cross-section where is defined $\cR$.
%\end{R}
As our perturbations are in the conservative world and we are free to perturb the vector field $\phi$ in such a way that the Birkhoff invariant be non-zero for some periodic orbit of the new Poincar\'e map. In other words:

\begin{R}\label{RBI}
We can assume, from now on, that the Poincar\'e map $\cR$ associated to the flow $\phi$ has the property that the Birkhoff invariant is non zero for some periodic orbit \emph{(see, {Appendix \ref{BI}})}.
\end{R} 
%%%%%%%%%%%%%%%%%%%%%%%%%%%%%%%%%%%%%%%%%%%%%%%%%%%%%%%%%%%%%%%%%%%%%%%%%%%
%%%%%%%%%%%%%%%%%%%%%%%%%%%%%%%%%%%%%%%%%%%%%%%%%%%%%%%%%%%%%%%%%%%%%%%%%%
%%%%%%%%%%%%%%%%%%%%%%%%%%%%%%%%%%%%%%%%%%%%%%%%%%%%%%%%%%%%%%%%%%%%
\subsubsection{Family of Perturbations {with the Property V}}\label{Realiz of the Pert}
%%%%%%%%%%%%%%%%%%%%%%%%%%%%%%%%%%%%%%%%%%%%%%%%%%%%%%%%%%%%%%%%%%%%
The central goal of this section is to do  small conservative  perturbations of $\phi$, in order to produce a family of perturbations of $\cR$ with good properties, which allow the use of the techniques of \cite{RM}, more specifically, the \emph{property $V$} which will be explained in Appendix  \ref{SIRCS}.
Therefore, the following three lemmas focus on this goal. 
\begin{Le}\label{Le 6.2}
Given $\mathcal{V}$ a $C^{r}$-neighborhood of $\phi$ and $p\in \Delta \cap\Sigma$ with $\Sigma\in \Xi$. Let $U$ be a neighborhood of $\phi^{\frac{t_{+}(p)}{2}}(p)$, then  there exists  a  conservative vector field $X\in \mathcal{V}$ such that:
\begin{enumerate}
\item[$(1)$]$X\equiv\phi$ outside of $U$,
\item[$(2)$] There is $\tau>0$, such that $X\equiv\phi$ outside of a subset of $U$ of the form 
$X^{[0,\tau]}(\Sigma_0)=\{X^{t}(x):x\in \Sigma_0, \, 0<t<\tau\}$, where $\Sigma_0$ is a  neighborhood of $\phi^{\frac{t_{+}(p)}{2}}(p)$ in $\phi^{\frac{t_{+}(p)}{2}}(\Sigma)$,
\item[$(3)$] The map  $\mathcal{R}_{X}$  satisfies  $\mathcal{R}_{X}(p)\neq\mathcal{R}(p)$.
\end{enumerate}
\end{Le}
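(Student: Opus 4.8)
\textbf{Proof plan for Lemma \ref{Le 6.2}.}
The plan is to produce the perturbation by a localized time-reparametrization (a ``flow box'' perturbation) near the mid-point $q:=\phi^{\frac{t_{+}(p)}{2}}(p)$, which automatically preserves the volume form if one works in Moser-type flow-box coordinates. First I would use the fact that the orbit segment $\{\phi^{t}(p):0<t<t_{+}(p)\}$ meets $\Xi$ only at its endpoints (definition of first-return time) to choose $\tau>0$ and a small cross-section $\Sigma_{0}\subset \phi^{\frac{t_{+}(p)}{2}}(\Sigma)$ through $q$ so that the flow box $B:=X^{[0,\tau]}(\Sigma_{0})=\{\phi^{t}(x):x\in\Sigma_{0},\ 0<t<\tau\}$ is contained in $U$ and is disjoint from $\Xi$ and from every other piece of the orbit of $p$. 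Shrinking $\Sigma_{0}$ and $\tau$ keeps $\overline{B}\cap\Xi=\emptyset$, which is what will guarantee that modifying $\phi$ only inside $B$ does not change the cross-sections on which $\cR$ and $\cR_{X}$ are defined.

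Next I would build the perturbed field $X$ inside $B$. Choose coordinates $(x_{1},x_{2},t)\in \Sigma_{0}\times(0,\tau)$ in which $\phi=\partial_{t}$ and $m$ is (a constant multiple of) $dx_{1}\wedge dx_{2}\wedge dt$; this is possible after a volume-preserving change of coordinates since $\phi$ is a conservative, hence divergence-free, nonsingular field on $B$. In these coordinates take $X=\partial_{t}+Y$, where $Y=\rho(x_{1},x_{2},t)\,W(x_{1},x_{2})$ with $W$ a divergence-free (in the $(x_{1},x_{2})$ variables) vector field tangent to $\Sigma_{0}$, and $\rho$ a bump function supported in a compact subset of $\Sigma_{0}\times(0,\tau)$ with $\int_{0}^{\tau}\rho\,dt$ not identically zero; the smallness of $\|\rho\|_{C^{r}}$ is chosen so that $X\in\mathcal{V}$. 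Because $W$ is area-preserving in the transverse direction and the perturbation is a ``shear'' that does not affect the $t$-component, $\operatorname{div}X=\operatorname{div}\phi=0$, so $X$ is still conservative; and $X\equiv\phi$ outside $B\subset U$, giving items 1 and 2 with this $\tau$ and $\Sigma_{0}$.

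Finally I would verify item 3. The time-$\tau$ map of $X$ from $\Sigma_{0}$ to $\phi^{\tau}(\Sigma_{0})$ differs from that of $\phi$ by the (nontrivial) displacement $\int_{0}^{\tau}\rho(\cdot,\cdot,t)\,dt\cdot W$ in the transverse directions, up to higher-order terms; choosing $W$ nonzero at the point corresponding to $q$ and $\rho$ with nonzero time-integral there, the image of $p$ under the ``holonomy through $B$'' is genuinely moved, while outside $B$ the orbit of $p$ under $X$ coincides with that under $\phi$. Hence $\cR_{X}(p)=\phi^{s}(\text{moved point})$ for the appropriate return time $s$, which is different from $\cR(p)=\phi^{t_{+}(p)-\frac{t_{+}(p)}{2}}(q)$; shrinking the support if necessary keeps $\cR_{X}(p)$ inside the same section $\Sigma$ so the inequality $\cR_{X}(p)\neq\cR(p)$ makes sense and holds. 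The main obstacle I expect is the bookkeeping needed to guarantee simultaneously that (a) $B$ avoids $\Xi$ and the rest of $\mathcal{O}(p)$ so no spurious returns are created, and (b) the perturbation stays conservative; both are handled by working in volume-preserving flow-box coordinates and taking the support of $\rho$ small, but care is required because the GCS are only $C^{0}$ away from the $C^{\infty}$ models chosen at the end of Section \ref{PMCS}.
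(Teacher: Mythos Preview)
Your overall architecture matches the paper's: pass to volume-preserving flow-box coordinates near $q=\phi^{t_{+}(p)/2}(p)$ (this is exactly Lemma~\ref{Le 6.3}, the conservative flowbox theorem), build a compactly supported perturbation there, and read off items 1--3. The paper then invokes a different concrete perturbation (Lemma~\ref{Le 6.4}, a rotation supported in a cylinder ring, taken from \cite{CandO}) rather than your shear, but either mechanism would suffice once it is genuinely divergence-free.

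There is, however, a real gap in your conservativity argument. With $Y=\rho(x_{1},x_{2},t)\,W(x_{1},x_{2})$ one has
\[
\operatorname{div} Y \;=\; \rho\,\operatorname{div}_{(x_{1},x_{2})}W \;+\; W\cdot\nabla_{(x_{1},x_{2})}\rho,
\]
and the second term does \emph{not} vanish just because $W$ is area-preserving and the $t$-component is untouched; a bump function $\rho$ depending on $(x_{1},x_{2})$ will generically have $W\cdot\nabla\rho\neq 0$. So as written $X$ need not be conservative. The fix is easy and keeps your argument intact: take $\rho=\rho(t)$ depending on $t$ alone and put the transverse cutoff into $W$, choosing $W=J\nabla H$ for a compactly supported Hamiltonian $H$ on $\Sigma_{0}$ with $\nabla H(q)\neq 0$. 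Then $\operatorname{div}(\rho W)=\rho\,\operatorname{div}_{(x_{1},x_{2})}W=0$, the support is still compact in $\Sigma_{0}\times(0,\tau)$, and your first-order displacement $\bigl(\int_{0}^{\tau}\rho\,dt\bigr)W(q)\neq 0$ gives item~3 exactly as you argue. With this correction your proof is complete and is a legitimate alternative to the cylinder-ring construction the paper uses.
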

%\textcolor{blue}{Adicionar reference, Bessa and Castro}\\

\noindent The proof of this lemma is an immediate consequence of the two lemmas below. The first is about conservative trivialization and the second is about local conservative perturbations. 
\begin{Le}{\emph{\cite[Lemma\, 3.4 \, (\text{Conservative flow box theorem})]{Bessa1}}}\label{Le 6.3}\ \,\\
Let $X\in \mathfrak{X}^{r}_{\omega}(M)$, $p$ be a regular point of the vector field and $\Sigma$ a cross-section of $X$ which  contains $p$, then there exists a $C^{\infty}$-coordinate system $\alpha\colon U\subset M \to \re^3$ with $\alpha(p)=0$ and such that 
\begin{enumerate}
\item[$(a)$] $\alpha_{*}X=(0,0,1)$,
\item[$(b)$]$\alpha_{*}\omega=dx\wedge dy \wedge dz$,
\item[$(c)$]$\alpha(U\cap \Sigma)\subset\{z=0\}.$
\end{enumerate}
\end{Le}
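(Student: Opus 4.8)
The plan is to prove Lemma \ref{Le 6.3}, the conservative flowbox theorem, which is a standard local normal form result: near a regular point of a volume-preserving vector field we can choose coordinates in which simultaneously the field becomes constant, the volume becomes the standard Euclidean volume, and a prescribed transversal section sits inside the coordinate plane $\{z=0\}$. Since the statement is attributed to \cite{Bessa1}, I would present the argument as a reconstruction of the classical proof.

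First I would produce an ordinary (non-conservative) flowbox. Since $p$ is a regular point, $X(p)\neq 0$, and the transversal section $\Sigma$ is a $2$-dimensional submanifold through $p$ with $X(p)\notin T_p\Sigma$. Parametrize a neighborhood of $p$ in $\Sigma$ by a chart $\beta\colon V\subset\Sigma\to\re^2$, $\beta(p)=0$, and define $\Phi(x,y,z)=X^{z}(\beta^{-1}(x,y))$ for $(x,y,z)$ near $0$. By the flow-box theorem $\Phi$ is a local diffeomorphism onto a neighborhood $U_0$ of $p$, $\Phi(U\cap\Sigma)\subset\{z=0\}$, and $\Phi_*\partial_z=X$; equivalently the inverse chart $\alpha_0=\Phi^{-1}$ satisfies $(\alpha_0)_*X=(0,0,1)$ and $\alpha_0(U_0\cap\Sigma)\subset\{z=0\}$. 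This gives items $1$ and $3$ but in general $(\alpha_0)_*\omega=\rho(x,y,z)\,dx\wedge dy\wedge dz$ for some positive smooth density $\rho$; since $X$ preserves $\omega$, the pushed-forward form is invariant under translation in $z$, so $\rho=\rho(x,y)$ depends only on $x,y$.

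The remaining step is to correct the density while keeping the first and third conclusions. I would look for a further change of coordinates of the form $(x,y,z)\mapsto(\xi(x,y),\eta(x,y),z)$ — note this fixes $\{z=0\}$ and does not touch the $z$-direction, so it preserves both $(\alpha_0)_*X=\partial_z$ and the condition that $\Sigma$ lies in $\{z=0\}$. Under such a map the volume form transforms by the Jacobian of $(x,y)\mapsto(\xi,\eta)$, so I need a smooth diffeomorphism of a neighborhood of $0\in\re^2$ with prescribed Jacobian determinant $\rho(x,y)$ and fixing $0$. This is exactly the $2$-dimensional Moser-type lemma (existence of a volume-normalizing change of coordinates for a positive density, with a prescribed base point), which is elementary in the plane: one can take, for instance, $\xi=\int_0^x\rho(s,y)\,ds$ and then an integrating factor in $y$, or invoke the $C^r$ Moser theorem on a small ball. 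Composing $\alpha_0$ with this planar correction yields the desired chart $\alpha$, and I would then note that by shrinking $U$ we may assume $U\cap\Sigma$ is exactly the slice corresponding to $\{z=0\}$.

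The main obstacle — really the only non-bookkeeping point — is the regularity and localization in the Moser step: one must ensure the density-normalizing diffeomorphism is $C^r$ (indeed $C^\infty$ here, since $X$ is smooth enough that $\rho$ is smooth) on a genuine neighborhood of $0$ and fixes the origin, so that all three normalizations hold simultaneously on a common domain $U$. Everything else is the standard flow-box construction plus the observation that $\omega$-invariance of $X$ forces the density to be $z$-independent, which is what allows the correction to be purely in the transversal variables.
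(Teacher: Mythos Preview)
The paper does not prove this lemma at all: it is quoted verbatim from \cite{Bessa1} and used as a black box in the proof of Lemma~\ref{Le 6.2}. Your reconstruction is the standard correct argument --- ordinary flowbox, then observe $L_X\omega=0$ forces the pulled-back density to be $z$-independent, then normalize it by a planar Moser-type change of variables that fixes both $\partial_z$ and $\{z=0\}$ --- so there is nothing to compare against here beyond noting that your write-up supplies what the paper leaves to the reference. One small remark: your explicit formula $\xi=\int_0^x\rho(s,y)\,ds$, $\eta=y$ already has Jacobian exactly $\rho(x,y)$, so the ``integrating factor in $y$'' you mention afterward is unnecessary.
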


%\textcolor{red}{escrever algo}
To next lemma, let $B_{\delta}(x,y)\subset \re^2$ be the open ball of center $(x,y)$ and radius $\delta$. Similarly, $\overline{B}_{\delta}(x,y)$ denotes the closed ball. If $C$ is the cylinder $\partial B_{\delta}(x,y)\times [0,h]\subset \re^3$ and $0<\beta<\delta$, we define of neighborhood of $C$ as
$$A_{\beta}(C)=\left( B_{\delta+\beta}(x,y)\setminus \overline{B}_{\delta-\beta}(x,y)\right) \times [0,h]\subset \re^3.$$ 
and call it \textit{cylinder ring} with center at $C$ and radius $\beta$.
%LLLLLLLLLLLLLLLLLLLLLLLLLLLLLLLLLLLLLLLLLLLLLLLLLLLLLLLLLLLLLLLLLLLLLLLLL
\begin{Le}{\emph{\cite[Lemma 3.2]{CandO}}}\label{Le 6.4}\\
Let $X\colon \re^3\to \re^3$ be the constant vector field defined by $X(x,y,z)=(0,0,1)$. Consider the cylinder $C=\partial B_{\delta}(0,0)\times [0,h]\subset \re^3$, $\delta>0$, $h>0$, and points $p\in \partial B_{\delta}(0,0)\times \{0\}$ and $q\in \partial B_{\delta}(0,0)\times \{h\}$. Let $\theta$ be the angle between the vector $p-(0,0,0)$ and $q-(0,0,h)$. Given $0<\beta<\delta$ there exists a $C^{\infty}$- vector field $Z$ on $\re^3$ with the following properties: 
\begin{enumerate}
\item[$(a)$] $Z$ preserves the canonical volume form $dx\wedge dy \wedge dz$,
\item[$(b)$] $Z\equiv X$ outside the cylinder ring $A_{\beta}(C)$,
\item[$(c)$]The positive orbit of $p$, with respect to $Z$, contains $q$,
\item[$(d)$]  Given $r\in \mathbb{N}$ and $\epsilon>0$, if $|\theta|$ is small enough, then ${\Vert Z-X\Vert}_{r}< \epsilon$, where $\Vert\cdot \Vert$ denotes the $C^r$ norm on the set of $C^r$ vector fields.
\begin{figure}[hbtp]
\centering
\includegraphics[scale=1.2]{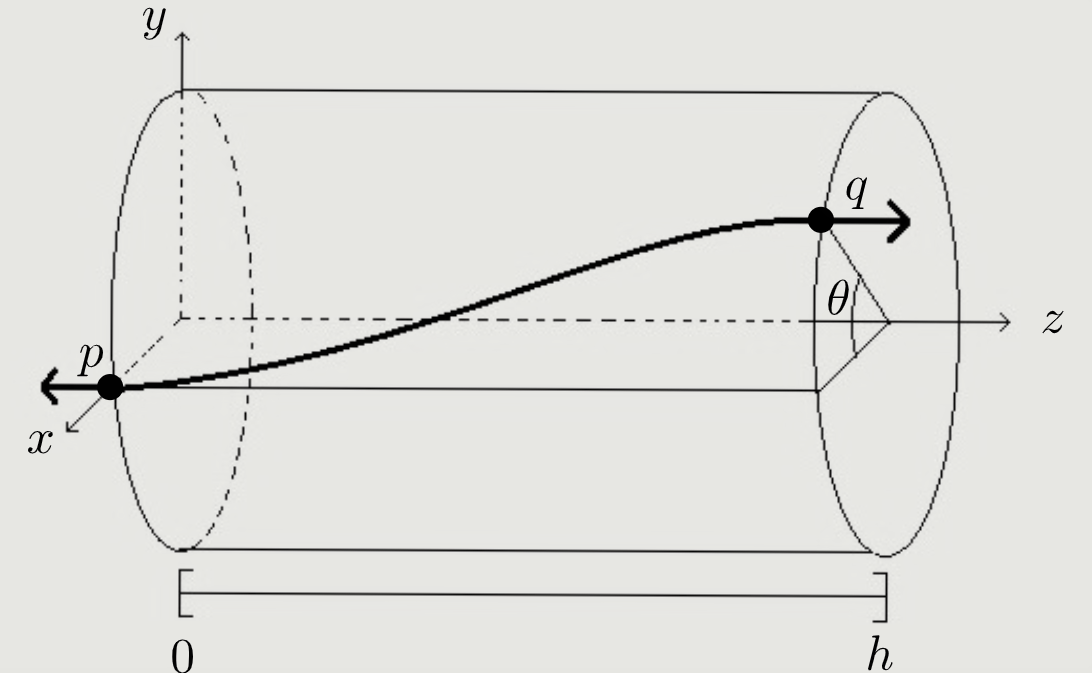}
\caption{The motion of the orbit }
\end{figure}

\end{enumerate}
\end{Le}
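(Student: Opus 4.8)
The plan is to take $Z$ of the form ``$X$ plus a volume-preserving rotational shear'' localised in the cylinder ring $A_\beta(C)$, and to tune the shear so that the angular drift accumulated by the orbit of $p$ between the levels $z=0$ and $z=h$ is exactly (the signed version of) $\theta$. Concretely, work in cylindrical coordinates $(\rho,\varphi,z)$ — valid away from the axis $\rho=0$, which we never touch since $\delta-\beta>0$ — let $R:=-y\,\partial_x+x\,\partial_y=\partial_\varphi$ be the standard rotation field, and set $Z:=X+g(\rho,z)\,R$ for a smooth function $g$ of $\rho$ and $z$ only, to be chosen. The first observation, giving item $1$, is that $g(\rho,z)\,R$ is divergence-free for \emph{any} such $g$: its divergence equals $-y\,\partial_x g+x\,\partial_y g$, and since $g$ depends on $(x,y)$ only through $\rho=\sqrt{x^2+y^2}$ this vanishes identically; as $X=\partial_z$ is also divergence-free, $Z$ preserves $dx\wedge dy\wedge dz$.

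Next I would fix the profile of $g$. Choose $C^\infty$ bump functions $\psi\colon\re\to[0,1]$ with support in $(\delta-\beta,\delta+\beta)$ and $\psi(\delta)=1$, and $\chi\colon\re\to\re$ with support in $(0,h)$ and $\int_0^h\chi(s)\,ds=1$, and put $g(\rho,z):=\theta'\,\psi(\rho)\,\chi(z)$, where $\theta'=\pm\theta$ is the signed rotation angle carrying $p-(0,0,0)$ to $q-(0,0,h)$ (so $|\theta'|=\theta$). Since $\delta-\beta>0$, the function $\psi(\rho)$ extends by zero to a $C^\infty$ function of $(x,y,z)$, so $Z$ is genuinely $C^\infty$ on $\re^3$; and since $g$ vanishes whenever $\rho\notin(\delta-\beta,\delta+\beta)$ or $z\notin(0,h)$, we get $Z\equiv X$ outside $A_\beta(C)$, which is item $2$.

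For item $3$, integrate the flow of $Z$: along an integral curve one has $\dot z=1$, $\dot\rho=0$ (the perturbation is tangent to the circles $\{\rho=\mathrm{const}\}$), and $\dot\varphi=g(\rho,z)$. Starting from $p=(\delta,\varphi_0,0)$ this yields $\rho(t)\equiv\delta$, $z(t)=t$, and $\varphi(t)=\varphi_0+\int_0^t g(\delta,s)\,ds=\varphi_0+\theta'\int_0^t\chi(s)\,ds$, using $\psi(\delta)=1$; at $t=h$ the orbit reaches $(\delta,\varphi_0+\theta',h)$, which is precisely $q$ by the choice of $\theta'$ (this reading of the angle is unambiguous when $|\theta|$ is small). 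Finally, for item $4$: $Z-X=\theta'\,\psi(\rho)\chi(z)\,R$ is supported in the fixed compact set $\overline{A_\beta(C)}$ and depends linearly on $\theta'$, so $\|Z-X\|_r\le|\theta|\,K_r$ with $K_r=K_r(r,\delta,\beta,h,\psi,\chi)$; given $\epsilon>0$ it suffices to take $|\theta|<\epsilon/K_r$.

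There is no serious obstacle here: the construction is explicit, and the only two points needing a moment's care are the divergence-free computation (which works purely because of the radial symmetry of $g$) and the smoothness of $Z$ at the axis $\rho=0$ and at the caps $z\in\{0,h\}$ (handled by the support conditions on $\psi$ and $\chi$). The one structural feature worth emphasising is that realising a full twist of $\theta$ costs only an order-$|\theta|$ perturbation, because the twist is distributed over the entire height $h$ of the cylinder — this is exactly what makes items $3$ and $4$ compatible.
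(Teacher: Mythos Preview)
The paper does not prove this lemma; it is quoted verbatim from \cite[Lemma~4.1]{CandO} and used as a black box in the proof of Lemma~\ref{Le 6.2}. Your construction is correct and is in fact the standard one for this type of statement: a rotational shear $Z=X+g(\rho,z)\,\partial_\varphi$ localised in the annular region, which is automatically divergence-free because $g$ is radially symmetric, preserves $\rho$ along orbits, and accumulates exactly the prescribed angular drift $\theta'$ over the height $h$ thanks to the normalisation $\int_0^h\chi=1$ and $\psi(\delta)=1$. The four items follow as you argue, and the linear dependence of $Z-X$ on $\theta'$ gives the $C^r$-smallness for free.
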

%\textcolor{red}{PENSAR EM COLOCAR OUTRA FIGUA O RETIRAR A MESMA - 02/11/2020}
%\noindent With the help of these last lemmas we can make the proof of Lemma \ref{Le 6.2}.
\begin{proof}[\emph{\textbf{Proof of Lemma \ref{Le 6.2}}}]
By Lemma \ref{Le 6.3}, we can consider a coordinates systems $\alpha\colon U'\to V$ in a neighborhood  $U'\subset U$ of $\phi^{\frac{t_{+}(p)}{2}}(p)$, with $\alpha(p)=(0,0,0)$, $\alpha_{*}\phi=(0,0,1)$, $\alpha_{*}\omega=dx\wedge dy\wedge dz$ and $\alpha(\phi^{\frac{t_{+}(p)}{2}}(\Sigma)\cap U')\subset \{z=0\}$. Let $\beta , \delta >0$, $0<h<1$ and $q\in\{z=0\}$ such that 
the solid cylinder $\overline{B}_{\beta+\delta}(q)\times [0,h]\subset V$ and $(0,0)\in \partial B_{\delta}(q)$.\\
Consider now the cylinder ring $A_{\beta}(C)$ defined by $\beta , \delta , h$ and $q$. Let $\theta$ be a small angle, and let $q'\in \partial B_{\delta}(q)\times \{h\}$, such that the angle between $(0,0,0)-q$ and $q'-(q,h)$ is equal to $\theta$. Now we may apply the perturbation Lemma \ref{Le 6.4} at the
cylinder $C=\partial B_{\delta}(q)\times[0, h]$ to join $0$ to $q'$ and obtain a vector field $Z$ on $V$ such that:
\begin{enumerate}
\item[$(a)$] $Z$ preserves the canonical volume form,
\item[$(b)$] $Z\equiv (0,0,1)$ in $V\setminus A_{\beta}(C)$,
\item[$(c)$] The positive orbit of $0$  respect to $Z$, contains $q'$.
\end{enumerate}
\indent \, \, \, Let us define the vector field $X$ in $M$ in the following way: $X\equiv \phi$ outside of $U'$ and 
$X=\alpha_{*}(Z)$ in $U'$. Note that $X$ is $C^{r}$ satisfies (1) and taking $\theta$ sufficiently small, we may assume $X\in \mathcal{V}$.\\
In order to prove (2), we consider $\Pi_0\subset V\cap\{z=0\}$ a compact neighborhood of
the origin contained in $\alpha(\phi^{\frac{t_{+}(p)}{2}}(\Sigma)\cap U'))$. Then just take $\Sigma_0=\alpha^{-1}(\Pi_{0})$ and $\tau=\sup \{t>0: \alpha(X^{t}(x))\in \Pi_{0}\times [0,h], x\in \Sigma_0\}$. The item (3) is an immediate consequence of the properties  (b) and (c) of the filed $Z$.
\end{proof}
\indent The proof of Lemma \ref{Le 6.2}, implies that for $p\in \Delta$ and  every small $\theta$ there is $X_{\theta}^{p}\in \mathfrak{X}^{r}_{\omega}(M)$ such that the Poincar\'e map ${\mathcal{R}}_{X_{\theta}^{p}}$ associated to $X_{\theta}^{p}$ satisfies $\mathcal{R}_{X_{\theta}^{p}}(p)\neq \cR(p)$.\\
In particular, if $p\in \Sigma\cap \Delta$, then 
\begin{equation}\label{E1-Sec6}
{\mathcal{R}}_{X_{\theta}^{p}}({W^{s}}_{loc,\cR}(q,\Sigma))\cap \cR({W^{s}}_{loc,\cR}(q,\Sigma))=\emptyset,
\end{equation}
\noindent for $q\in \Sigma\cap \Delta$ close to $p$, where $W^{s}_{loc,\cR}(q,\Sigma)$ is the local stable manifold associated to $\cR$.\\
Note that if $\theta=0$, then $X_{\theta}=\phi$.

\indent As $\Delta$ is a compact hyperbolic set, then there are a finite number of point in $\Delta$,  say  $p_1,\dots, p_n$ and  neighborhood $U_i$ of $\phi^{\frac{t_{+}(p_i)}{2}}$, pairwise disjoint as Lemma \ref{Le 6.2},  and such that the projection of $\ds\bigcup_{i}^{n}U_i$ over $\Xi$ along the flow $\phi^{t}$  contains a small Markov partition of $\Delta$.\\
So, we can define the $C^{r}$-vector field $X_{\theta}\in \mathfrak{X}^{r}_{\omega}(M)$ by 

\[ X_{\theta} = \left\{\begin{array}{lll}
X_{\theta}^{p_i} & \mbox{$\text{if} \ \  x\in U_i$};\\
          \ \ & \mbox{ \ } \\
\phi & \mbox{$ \text{otherwise}. $}\end{array} \right.\]
       
\noindent As $\theta$ is small, then the flow of $X_{\theta}$ is still a conservative Anosov flow, since the Anosov flows are robust.\\ Consider now the map $\Phi^{\theta}(x):=\cR^{-1}\circ \cR_{X_{\theta}}(x)$ defined in a small Markov partition of $\Delta$. Then by equation (\ref{E1-Sec6}), the map $\cR_{\theta}:=\cR\circ \Phi^{\theta}$ satisfies 
\begin{equation}\label{E2-Sec6}
{\mathcal{R}}_{\theta}({W^{s}}_{loc,\cR}(q,\Sigma))\cap \cR({W^{s}}_{loc,\cR}(q,\Sigma))=\emptyset, \, \, \text{for any} \, \, \, q\in \Delta.
\end{equation}

\noindent The last equation implies the following Lemma (cf. Appendix \ref{SIRCS} and \cite{MY1}).
\begin{Le}\label{R1-Sec6}
The  family of perturbations ${\mathcal{R}}_{\theta}$ of $\cR$ satisfies that the pair $({\mathcal{R}}_{\theta}, \Delta_{X_{\theta}})$ has the property $V$. Moreover, this property is persistent, \emph{i.e.}, there exists  a $C^{2}$-neighborhood $\mathcal{W}_{\theta}\subset \mathfrak{X}^{r}_{\omega}(M)$ of $X_{\theta}$ such that for all $X\in \mathcal{W}_{\theta}$ the pair $(\cR_{X}, \Delta_X)$ also have the property $V$.  
\end{Le}

%%%%%%%%%%%%%%%%%%%%%%%%%%%%%%%%%%%%%%%%%%%%%%%%%%%%%%%%%%%%%%%%%%%%%%%%%%%%%%%%%%%%%%%%%%%%%%%%%%%%%%%%%%%%%%%%%%%%%%%%%%%%%%%%%%%%%%%%%%%%%%%%%%%%%%%%%%%%%
%%%%%%%%%%%%%%%%%%%%%%%%%%%%%%%%%%%%%%%%%%%%%%%%%
%%%%%%%%%%%%%%%%%%%%%%%%%%%%%%%%%%%%%%%%%%%%%%%
%%%%%%%%%%%%%%%%%%%%%%%%%%%%%%%%%%%%%%%%%%%%%%%%%%%%%%%%%%%%%%%%%%%%%%%%%%%%%%%%%%%%%%%%%%%%%%%%%%%%%%%%%%%%%%%%%%%%%%%%%%
\subsection{Description of the set $\mathcal{U}_{X,\Lambda}$}\label{Desc of Functions}
To construct the set  $\mathcal{U}_{X,\Lambda}$ is tied to the "differentiability " of $\textrm{max} F_{\phi}$. But, remember that in general $\textrm{max} F_{\phi}$ is no differentiable (see Remark \ref{RND}).\\
In what follows we give some ``differentiability" to {$\textrm{max} F_{\phi}$ at least for $F\in C^{2}(M,\re)$}, see Lemma \ref{Lmax} below. To achieve  such differentibility, we will need a combinatorial arguments.

\subsubsection{Combinatorial Arguments and differentiability of $\textrm{max} F_{\phi}$}\label{CA}

%
%%%%%%%%%%%%%%%%%%%%%%%%%%%%%%%%%%%%%%%%%%%%%%%%%%%%%%%%%%%%%%%%%%%%%%%%%%%%%%%%%%%%%%%%%%%%%%%%%%%%%%%%%%%%%%%%%%%%%%%%%%
%Note that the definition of $\textrm{max} F_{\phi}$ depends on the flow $\phi^{t}$, or equivalently, the vector field $\phi$, and it might not be $C^1$.  In what follows we give some ``differentiability" to {$\textrm{max} F_{\phi}$ at least for $F\in C^{2}(M,\re)$}, see Lemma \ref{Lmax} below. To achieve  this differentibility, we will need a combinatorial arguments.\\
%\ \\
%%%%%%%%%%%%%%%%%%%%%%%%%%%%%%%%%%%%%%%%%%%%%%%%%%%%%%%%%%%%%%%%%%%%%%%%%%%%%%%%%%%%%%%%%%%%%%%%%%%%%%%%%%%%%%%%%%%%%%%%%%
\noindent The following lemma is combinatorial and will be used to show the Lemma \ref{LIC}.
%LLLLLLLLLLLLLLLLLLLLLLLLLLLLLLLLLLLLLLLLLLLLLLLLLLLLLLLLLLLLLLLLLLLLLLLLLLLLLLLLLLLLLLLLLLLLLLLLLLLLLLLLLLLLL
\begin{Le}\label{L5SGF}
Let $A=(a_{ij})_{1\leq i,j\leq n}$ be a matrix such that $a_{ij}\in\{0,1\}$ for any $i, \ j$ and \\$|\{(i,j):a_{ij}=1\}|\geq\frac{99}{100}n^{2}$, then $tr(A^{k})\geq\left(\frac{n}{2}\right)^{k}$ for all $k\geq2$. Moreover, there is a set $Z\subset \{1,2,\dots,n\}$ with $|Z|\geq\frac{4n}{5}$ such that, for any $k\ge 2$ and any $i,j\in Z$, we have
$$(A^{k})_{ij}\geq \frac{4}{5}\left(\frac{3}{5}\right)^{k-2}\cdot n^{k-1}.$$
\end{Le}
%LLLLLLLLLLLLLLLLLLLLLLLLLLLLLLLLLLLLLLLLLLLLLLLLLLLLLLLLLLLLLLLLLLLLLLLLLLLLLLLLLLLLLLLLLLLLLLLLLLLLLLLLLLLLL
\noindent Remember that if $B=(b_{ij})_{1\leq i,j\leq n}$ is a square matrix, then $tr(B)=\sum^{n}_{i=1}b_{ii}$ denotes the trace of $B$.
\begin{proof} 
There is $X\subset\{1,2,\dots,n\}$ with $|X|\geq\frac{9n}{10}$ such that, for any $\ds i\in X$, \\ $|\{j\leq n:a_{ij}=1\}|\geq\frac{9n}{10}$. 
Indeed,  
if there are more than $\frac{n}{10}$ lines in the matrix, each with at least $\frac{n}{10}$ null entries, then the number of null entries of the matrix is greater that $\frac{\ n^2}{100}$, and so $|\{(i,j):a_{ij}=1\}|<n^2-\frac{\ n^2}{100}=\frac{\ 99n}{100}$ which is a contradiction.\\
Analogously, there is $Y\subset\{1,2,\dots,n\}$ with $|Y|\geq\frac{9n}{10}$ such that, for any $\ds j\in Y$,\\ $|\{i\leq n:a_{ij}=1\}|\geq\frac{9n}{10}$.
Let $Z=X\cap Y$, we have $|Z|\geq\frac{9n}{10}+\frac{9n}{10}-n=\frac{4n}{5}$.
If $i,j\in Z$, then $$(A^2)_{ij}=\sum^{n}_{r=1}a_{ir}a_{rj}=\sum_{r\in A_i \cap B_j}a_{ir}a_{rj}=|A_i\cap B_j|\geq\frac{9n}{10}+\frac{9n}{10}-n=\frac{4n}{5},$$
where $A_i=\{j\leq n:a_{ij}=1\}$ and $B_j=\{i\leq n:a_{ij}=1\}$. We will show by induction that if $i,j\in Z$, then 

$$(A^{k})_{ij}\geq \frac{4}{5}\left(\frac{3}{5}\right)^{k-2}\cdot n^{k-1}\ \ \text{for all} \ k\geq 2.$$
In fact, the case $k=2$ was proved above and given $k\ge 2$ for which the statement is true, we have\\
\begin{eqnarray*}
\ds(A^{k+1})_{ij}&=&\sum^{n}_{r=1}(A^{k})_{ir}\cdot a_{rj}\geq \sum_{r\in Z}(A^k)_{ir}\cdot a_{rj}\geq |Z\setminus\{r\in Z:a_{rj}=0\}|\times \frac{4}{5}\left(\frac{3}{5}\right)^{k-2}\cdot n^{k-1}\\
&\geq& \left(\frac{4n}{5}-\frac{n}{10}\right)\frac{4}{5}\left(\frac{3}{5}\right)^{k-2}\cdot n^{k-1}>\frac{4}{5}\left(\frac{3}{5}\right)^{k-1}\cdot n^{k},
\end{eqnarray*}
since $|\{r\in Z:a_{rj}=0\}|\leq\frac{n}{10}$.

Thus, for all $k\geq 2$
$$tr(A^{k})\geq \sum_{i\in Z}(A^{k})_{ii}\geq \frac{4n}{5}\cdot \frac{4}{5}\left(\frac{3}{5}\right)^{k-2}\cdot n^{k-1}>\left(\frac{3}{5}\right)^{k}\cdot n^k>\left(\frac{n}{2}\right)^{k}.$$

\end{proof}
%RRRRRRRRRRRRRRRRRRRRRRRRRRRRRRRRRRRRRRRRRRRRRRRRRRRRRRRRRRRRRRRRRRRRRRRRRRRRRRRRRRRRRRRRRRRRRRRRRRRRRRRRRRRRRRRRR
\begin{R}\label{R-HD}
Suppose that the matrix $A$ as in \emph{Lemma \ref{L5SGF}}, is the matrix of transitions for a regular Cantor set $K$ with Markov partition $R=\{R_1, R_2,\cdots, R_n\}$ defined by an expansive map $\psi$ satisfying $C^{-1}/\varepsilon<|\psi'(x)|<C/\varepsilon, \forall x\in \cup_{i\le n}R_i$, for a suitable constant $C$ $($with $\log C \ll \log \varepsilon^{-1})$. From \emph{Lemma \ref{L5SGF}}, we get a set $Z$ of indices with $|Z|\geq \frac{4n}{5}$.
Fix indices $\tilde{i}, \tilde{j}\in Z$ such that $a_{\tilde{i}\tilde{j}}=1$. Consider a Markov partition for $\psi^{k+2}$ corresponding to the words in the set $$X=\{\tilde{j}r_1r_2\cdots r_k\tilde{i}:r_i\leq n \ \ \text{and} \ \ a_{\tilde{j}r_1}=a_{r_1r_2}=\dots=a_{r_{k-1}r_k}=a_{r_k\tilde{i}}=1\}.$$ By \emph{Lemma \ref{L5SGF}}, $|X|=(A^{k+1})_{\tilde{i}\tilde{j}}\geq \frac{4}{5}\left(\frac{3}{5}\right)^{k-1}\cdot n^{k}>\left(\frac{n}{2}\right)^{k}$, since $a_{\tilde{i}\tilde{j}}=1$, any transition between two words in $X$ is admissible.\\
\noindent Consider the regular Cantor set 
$$\tilde{K}:=\{\alpha_1\alpha_2\alpha_3\dots|\alpha_i\in X, \forall i\ge 1\}\subset K.$$
Taking $k$ large enough, since  $|(\psi^{k+2})'|<\left(\frac{C}{\varepsilon}\right)^{k+2}$, then 
\begin{eqnarray*}
HD(\tilde{K})&>&\frac{\log \left(\frac{n}{2}\right)^{k}}{\log\left(\frac{C}{\varepsilon}\right)^{k+2}}=\frac{k}{k+2}\cdot\dfrac{\log n-\log2}{\log C-\log\varepsilon}=(1-o(1))\dfrac{\log n}{\log(\varepsilon^{-1})}\\
&=&(1-o(1))\dfrac{\log n}{\log(C^{-1}/\varepsilon)}\ge (1-o(1))HD(K)\\
\end{eqnarray*}

\noindent It follows that $HD(\tilde{K})\sim HD(K)\sim \dfrac{\log n}{\log(\varepsilon^{-1})}$.
\end{R}
%RRRRRRRRRRRRRRRRRRRRRRRRRRRRRRRRRRRRRRRRRRRRRRRRRRRRRRRRRRRRRRRRRRRRRRRRRRRRRRRRRRRRRRRRRRRRRRRRRRRRRRRRRRRRRRRRR
We use the above Remark to understand  the behavior of the horseshoe $\Delta$ when it is intersected by a finite number of $C^{1}$-curves.\\
%\textcolor{red}{PAREI AQUI 02/10/2020 - 18:39}
%LLLLLLLLLLLLLLLLLLLLLLLLLLLLLLLLLLLLLLLLLLLLLLLLLLLLLLLLLLLLLLLLLLLLLLLLLLLLLLLLLLLLLLLLLLLLLLLLLLLLLLLLLLLLLLLLL
\begin{Le}{\emph{\textbf{Intersection of curves with $\Delta$}}}\label{LIC}\\
Let $\alpha=\{\alpha_i:[0,1]\to \Xi, i\in\{1,\dots,m\}\}$ be a finite family of $C^{1}$-curves. Then for all $\epsilon>0$ there are sub-horseshoes $\Delta_{\alpha}^{s}$,  $\Delta_{\alpha}^{u}$ of $\Delta$ such that $\Delta_{\alpha}^{s,u} \cap \alpha_i([0,1])=\emptyset$ for any $i\in\{1,\dots,m\}$ and 
$$HD(K_{\alpha}^{s})\geq HD(K^s)-\epsilon \ \text{and} \ \ HD(K_{\alpha}^{u})\geq HD(K^u)-\epsilon,$$
\noindent where $K_{\alpha}^{s}$, $K^{s}$ are regular Cantor sets that describe the geometry transverse of the unstable foliation $W^{u}(\Delta_{\alpha}^s), \  W^{u}(\Delta)$ respectively, and $K_{\alpha}^{u}$, $K^{u}$ are regular Cantor set that describe the geometry transverse of the stable foliation $W^{s}(\Delta_{\alpha}^u), \  W^{s}(\Delta)$, respectively \emph{(cf. Appendix \ref{sec EMAH})}.
 \end{Le}
%LLLLLLLLLLLLLLLLLLLLLLLLLLLLLLLLLLLLLLLLLLLLLLLLLLLLLLLLLLLLLLLLLLLLLLLLLLLLLLLLLLLLLLLLLLLLLLLLLLLLLLLLLLLLLLLL
% \indent We will prove this result for the stable Cantor set. For the unstable Cantor set the proof is analogous.\\

\noindent Before starting the proof of the previous lemma, we introduce some definitions and remarks.\\

\indent Let us fix a Markov partition $R$ of $\Delta$.  Given $R(\ua)\in R$, for an admissible word $\ua=(a_{i_1},\cdots,a_{i_r})$. We denote $\left|(a_{i_1},\cdots,a_{i_r})\right|$ the diameter of the projection on $W^{s}_{loc}$ of $R(\ua)$ along to the  foliation $\mathcal{F}^{u}$ (cf. the construction of $K^{s}$ in  Appendix \ref{sec EMAH}).
Fix $a_r, \ a_s$ such that the pair $(a_r,a_s)$ is admissible. Given $\epsilon>0$, we have the following definition.
\begin{Defi}\label{D2SGF}
A piece $(a_{i_1},\cdots,a_{i_k})$ $($in the construction of $K^{s}$\emph{)} is called an \emph{$\epsilon$-piece} if 
$$ |(a_{i_1},\cdots,a_{i_k})|<\epsilon \ \text{and} \ |(a_{i_1},\cdots,a_{i_{k-1}})|\geq \epsilon.$$
\end{Defi}

\noindent Put  $$X_{\epsilon}=\{ \epsilon \text{-piece} \ (a_{i_1},\cdots,a_{i_k}):i_1=s \ \text{and} \ i_k=r\}=\{\theta_{1},\dots,\theta_{N}\}.$$

\noindent Notice that $\theta_{i}\theta_{j}$ is an admissible word for every $i, j\le N$. We define  the Cantor set 
$$K(X_{\epsilon}):=\{\theta_{j_{1}}\theta_{j_{2}}\cdots\theta_{j_k}\cdots | \theta_{j_{i}}\in X_{\epsilon},\forall i\ge 1\}\subset K^{s}.$$

\indent Notice that $N\sim \epsilon^{-d_s}$, where $d_s=HD(K^s)$, and so $HD(K(X_\epsilon))$ is close to $HD(K^{s})$ provided $\epsilon$ is small enough.\\

\indent Dividing the curves in smaller curves if necessary, we can assume that the finite family $\alpha$ is formed by curves that are graphs of $C^{1}$-functions of $W^{s}(\Delta)$ on $W^{u}(\Delta)$ or from $W^{u}(\Delta)$ on $W^{s}(\Delta)$.\\
\indent Denote by $I_{\theta_i}$ the interval associated with $\theta_i$ in the construction of $K^{s}$. There is a constant $C>1$ (which depends on the geometry of the horseshoe $\Delta$, but not on $\epsilon$) such that
$$C^{-1}\epsilon<|I_{\theta_{i}}|<C\epsilon.$$

\noindent For each $I_{\theta_i}$, with $\theta_i=(a_{i_1},\cdots,a_{i_k})$, we associate the interval $I'_{\theta_{i}^{t}}$ corresponding to the transposed sequence $\theta_{i}^{t}=(a_{i_k},\cdots,a_{i_1})$ in the construction of $K^{u}$ (unstable Cantor set),  by an abuse of language, we will say that the interval $I'_{\theta_{i}^{t}}$ is the ``\emph{transposed}'' interval of $I_{\theta_i}$ (and vice-versa). Then, since $\Delta$ is horseshoe there exists $\beta\ge 1$ (which depends on the geometry of the horseshoe $\Delta$ but not on $\epsilon$ or $k$) such that
$$C^{-1}|I_{\theta_i}|^{\beta}<|I'_{\theta_{i}^{t}}|<C|I_{\theta_{i}}|^{1/\beta}.$$
%RRRRRRRRRRRRRRRRRRRRRRRRRRRRRRRRRRRRRRRRRRRRRRRRRRRRRRRRRRR
\begin{R}\label{conservative geometry}
In the conservative case, \emph{i.e.}, when the horseshoe is defined by a diffeomorphism that preserves a smooth measure, the above inequality holds with $\beta=1$. %\textcolor{red}{This will be useful in the section \ref{sec PM}}.
\end{R}
%RRRRRRRRRRRRRRRRRRRRRRRRRRRRRRRRRRRRRRRRRRRRRRRRRRRRRRRRRRRR

%\textcolor{red}{PAREI AQUI 03/10/2020 - 01:19}
%%%%%%%%%%%%%%%%%%%%%%%%%%%%%%%%%%%%%%%
\begin{proof} [\bf{Proof of Lemma \ref{LIC}}]
We prove the stable case since the unstable case is analogous. For this sake, we consider the related position of the family of curves $\alpha$ with respect to the stable and unstable manifolds.
\item- \underline{First case}. (Graph of a $C^{1}$-function from $W^{s}(\Delta)$ on $W^{u}(\Delta)$). In this case, consider the image $P$ of $I_{\theta_i}$ by this function. Then, $C$ and $\epsilon$ (of the above discussion) can be taken such that $|P|\leq C^2\epsilon$. Let $P'$, the smallest interval of the construction of $K^{u}$ containing $P$. Then, if $J\in W^{s}(\Delta)$ is the transposed interval of $P'$, we have $|J|\leq (C^2\epsilon)^{1/\beta}$. Then 
$$\#\{I_{\theta_{j}}:I_{\theta_j}\cap J\neq \emptyset\}\leq C\left(\frac{(C^2\epsilon)^{1/\beta}}{\epsilon}\right)^{d_s}=\tilde{C}\epsilon^{d_s(1/\beta-1)},$$
where $d_s$ is the Hausdorff dimension of stable Cantor set.\\
Thus, $$\#\{(I_{\theta_i}, I'_{\theta_{j}^{t}}):I_{\theta_i}\times I'_{\theta_{j}^{t}}\ \text{intersects the curve} \}\leq \epsilon^{-d_s}\tilde{C}\epsilon^{d_s(1/\beta-1)}=\tilde{C}\epsilon^{d_s(1/\beta-2)}\ll\epsilon^{-2d_s}.$$

\item- \underline{Second case}. (Graph of a $C^{1}$-function from $W^{u}(\Delta)$ on $W^{s}(\Delta)$). In this case, consider the image $J'$ of $I'_{\theta_{i}^{t}}$. Then, $|J'| \leq c |I'_{\theta_{i}^{t}}|\leq c(C\epsilon)^{1/\beta}$, ($J'$ is the image of $I'_{\theta_{i}^{t}}$ by a $C^{1}$-function), so we have, analogously,

$$\#\{I_{\theta_{i}}:I_{\theta_i}\cap J'\neq\emptyset\}\leq {\hat{C}}\epsilon^{d_s(1/\beta-1)}$$
and
$$\#\{(I_{\theta_i}, I'_{\theta_{j}^{t}}):I_{\theta_i}\times I'_{\theta_{j}^{t}}\ \text{intersects the curve}\}\leq \epsilon^{-d_s}\hat{C}\epsilon^{d_s(1/\beta-1)}=\hat{C}\epsilon^{d_s(1/\beta-2)}\ll\epsilon^{-2d_s}.$$
Note that $\epsilon^{-2d_s}\sim N^{2}=$ the total number of transitions $\theta_i\theta_j$.\\
% Here we use a slight abuse of notation: this "cartesian" product $\times$ corresponds to the local product structure of the horseshoe.
\noindent We say that $\theta_U\theta_V$ is a prohibited transition if and only if some curve of the family $\alpha$ intersects the rectangle $I_{\theta_{U}}\times I'_{\theta_{V}^{t}}.$\\

\indent Consider the admissible word $\theta_i\theta_j\theta_k\theta_s$ with $\theta_{i},\theta_{j},\theta_{k}, \theta_{s}\in X_{\epsilon}$. This word generates an interval of the size of the order of $\epsilon^4$ in the construction of $K^{s}$.\\

\noindent We say that $\theta_i\theta_j\theta_k\theta_s$ is a prohibited word, if within there is a prohibited transition $\theta_{U}\theta_{V}$

$$\overbrace{---}^{\theta_i}\overbrace{---}^{\theta_j}\overbrace{---}^{\theta_k}\overbrace{---}^{\theta_s}$$
$$\underbrace{--}_{\rho}\underbrace{------}_{\theta_{U}\theta_{V}}\underbrace{----}_{\beta}.$$

\noindent Denote by $PW$ the set of the prohibited words $\theta_i\theta_j\theta_k\theta_s$. We want to now estimate $|PW|$.\\
In fact: $|I_{\rho}||I_{\beta}|\sim \epsilon^2\sim 2^{-2n}$, then there is $t\leq 2n$ such that $|I_{\rho}|\sim 2^{-t}$ and $|I_{\beta}|\sim 2^{t-2n}$.\\
Thus, $\#\{I_{\rho}\}\sim (2^{-t})^{-d_s}=2^{td_s}$ and $\#\{I_{\beta}\}\sim (2^{-(2n-t)})^{-d_s}=2^{(2n-t)d_s}$. Therefore, for some constant $\tilde C>1$ (as in the first part of the proof), we have that\\
%\textcolor{red}{PAREI AQUI 16/02/21 - 18:43}
$$|PW|\leq \tilde C \cdot (2n)\cdot 2^{td_s}2^{(2n-t)d_s}\epsilon^{d_s(1/\beta-2)}\leq 2\tilde C\log \epsilon^{-1}\epsilon^{d_s(1/\beta-4)}\ll \epsilon^{-4d_s}$$
%% $2n$ because $t\leq 2n$
\noindent the last inequality follows from $2\tilde C(\log\epsilon^{-1)}\epsilon^{d_s/\beta}\ll 1$.
\ \\

\noindent Then, the total of prohibited words $\theta_i\theta_j\theta_k\theta_s$ is much less than $\epsilon^{-4d_s}\sim N^4$, the total number of words $\theta_i\theta_j\theta_k\theta_s$.

\noindent Consider $\ds A=(a_{(i,j)(k,s)})$ for $(i,j),(k,s)\in\{1,\dots,N\}^2$ the matrix defined by 
\[ a_{(i,j)(k,s)} = \left\{ \begin{array}{lll}
         1 & \mbox{if $\theta_{i}\theta_{j}\theta_{k}\theta_{s} \ \text{is not prohibited}$};\\
          & \\ 
       0 & \mbox{if $\theta_i\theta_j\theta_{k}\theta_{s}$ {is prohibited for some} \ 
       $\theta_U\theta_{V}$}.\end{array} \right. \] 
Put $\tilde{\theta}_{ij}=\theta_i\theta_j$ for $i,j\le N$. Define  $\widetilde{K}$ the regular Cantor set 
$$\widetilde{K}:=\{\tilde{\theta}_{i_1j_1}\tilde{\theta}_{i_2j_2}\cdots\tilde{\theta}_{i_nj_n}\cdots | a_{(i_k,j_k)(i_{k+1},j_{k+1})}=1,\forall k\ge 1\}\subset K^{s}.$$       
By the previous discussion, we have $\#\{a_{(i,j)(k,s)}:a_{(i,j)(k,s)}=1\}\geq\frac{99}{100}({N^2})^{2}$, so by the
Remark \ref{R-HD} we have $HD(\widetilde{K})\sim HD(K(X_{\epsilon}))\sim HD(K^s)$. Consider the sub-horseshoe of $\Delta$ defined by 

$$\Delta^{s}_{\alpha}:=\bigcap_{n\in\mathbb{Z}}\cR^{n}\left(\bigcup_{(i,j),(k,s)\in \{1,2,\dots,N\}^2, \  a_{(i,j)(k,s)}=1} (R(\tilde\theta_{ij})\cap {\cR}^{-1}(R(\tilde\theta_{ks}))\right),$$ 
where $R(\tilde\theta_{ij})$ is the rectangle associated to the word $\tilde\theta_{ij}$.\\
Then, the stable regular Cantor set $K^s_{\alpha}$ describing the transverse geometry of the unstable foliation $W^{u}(\Delta^{s}_{\alpha})$ is equal to $\widetilde{K}$ . Then by the above discussion we have that 

$$HD(K^s_{\alpha})\sim HD(K^s)$$
and by definition of $\Delta^{s}_{\alpha}$ we have that $\Delta^{s}_{\alpha}\cap \alpha_i=\emptyset, \forall i\le m$. This concludes the proof.
\end{proof}
Now we can prove some lemmas, which give some differentiability to $\textrm{max}F_{\phi}$. To get there, we introduce some subsets of $C^2(M,\re)$. First, we consider the family of one parameter  $\beta>0$, $\mathcal{B}_{\phi,\beta}^{s(u)}$ defined as follows.
\begin{Defi}\label{DLmax} We  say that $F\in \mathcal{B}_{\phi, \beta}^{s(u)}\subset C^2(M,\re)$ whenever 
\begin{enumerate}
\item[\emph{(i)}] There exists a sub-horseshoe $\Delta_{F}^{s(u)}$ of $\Delta$ with $HD(K_{F}^{s(u)})> HD(K^{s(u)})-2\beta$,
\item[\emph{(ii)}] There exists a Markov partition $R_{F}^{s(u)}$ of $\Delta_{F}^{s(u)}$, respectively, such that the function $\textrm{max} F_{\phi}|_{\Xi\cap R_{F}^{s(u)}}\in C^{1}(\Xi\cap R_{F}^{s(u)},\re)$,
\end{enumerate}
where $K_{F}^{s(u)}$, $K^{s(u)}$ are the stable (unstable) Cantor sets associated to $\Delta_{F}^{s(u)}$ and $\Delta$, respectively. 
\end{Defi}

%%%%%%%%%%%%%%%%%%%%%%%%%%%%%%%%%%%%%%%%%%%%%%%
%%%%%%%%%%%%%%%%%%%%%%%%%%%%%%%%%%%%%%%%%%%%%%%
%LLLLLLLLLLLLLLLLLLLLLLLLLLLLLLLLLLLLLLLLLLLLLLLLLLLLLLLLLLLLLLLLLLLLLLLLLLLLLLLLLLLLLLLLLLLLLLLLLLLLLLLLLLLLLLLLL
\begin{Le}\label{Lmax}
For any $\beta>0$ small enough, the sets $\mathcal{B}_{\phi,\beta}^{s(u)}$ are dense and $C^{2}$-open sets. 
\end{Le}
%%%%%%%%%%%%%%%%%%%%%%%%%%%%%%%%%%%%%%%%%%%%%%%%%%%%%%%%%
%%%%%%%%%%%%%%%%%%%%%%%%%%%%%%%%%%%%%%%%%%%%%%%%%%%%%%%%%
%%%%%%%%%%%%%%%%%%%%%%%%%%%%%%%%%%%%%%%%%%%%%%%%%%%%%%%%
Before we present the proof of Lemma \ref{Lmax}, let us to consider an auxiliary set $\mathcal{N}_{\phi}$ of functions defined as follows. 

Once again we cover $\Lambda$ with a finite number of tubular neighborhoods $U_k$, $1\leq k\leq m$ whose boundaries are $C^\infty$ - good cross sections pairwise disjoints (as Remark \ref{R11}) with $\Xi=\bigcup\limits_{i=1}^k\Sigma_i$. \\ For each $k$, let us fix coordinates $(x_1(k),x_2(k),x_3(k))$ on $U_k$ such that $x_3(k)$ is the flow direction and $U_s\cap\Xi=\{x_3(s)=0\}\cup\{x_3(s)=1\}$. 

\begin{Defi}\label{d.N}
We say that $F\in\mathcal{N}_{\phi}\subset C^r(M,\re)$, $r\geq 4$,  whenever: 
\begin{itemize}
\item[\emph{(i)}] $0$ is a regular value of the restriction of $\frac{\partial F}{\partial x_3(k)}$ to $U_k\cap\Xi$; 
\item[\emph{(ii)}] $0$ is a regular value of $\frac{\partial^3 F}{\partial x_3(k)^3}$; 
\item[\emph{(iii)}] $0$ is a regular value of the functions $\frac{\partial^2 F}{\partial x_3(k)^2}$ and $\frac{\partial^2 F}{\partial x_3(k)^2}|_{\{\frac{\partial^3 F}{\partial x_3(s)^3}=0\}}$; 
\item[\emph{(iv)}] $0$ is a regular value of the functions $\frac{\partial F}{\partial x_3(k)}|_{\{\frac{\partial^2 F}{\partial x_3(k)^2}=0\}}$ and $\frac{\partial F}{\partial x_3(k)}|_{\{\frac{\partial^3 F}{\partial x_3(k)^3}=0\}\cap \{\frac{\partial^2 F}{\partial x_3(k)^2}=0\}}$,
\end{itemize} 
for each $1\leq k\leq m$. 
\end{Defi}

\begin{Le}\label{l.N} The set $\mathcal{N}_{\phi}$ is dense in $C^r(M,\re)$,  $r\geq 4$. 
\end{Le}

\begin{proof} Given a function $F\in C^r(M,\re)$,  $r\geq 4$, let us consider the three-parameter family 
$$F_{a,b,c}(x_1,x_2,x_3) = F(x_1,x_2,x_3) - c x_3^3/6 - b x_3^2/2 -a x_3$$
where $a,b,c\in\mathbb{R}$. 

By Sard's theorem, we can fix first a very small regular value $c\approx 0$ (close enough to $0$) of $\frac{\partial^3 F}{\partial x_3^3}$, then a very small regular value $b\approx 0$ of both $\frac{\partial^2 F}{\partial x_3^2} - c x_3$ and its restriction to $\{\frac{\partial^3 F}{\partial x_3^3}=c\}$, and finally a very small regular value $a\approx 0$ of $(\frac{\partial F}{\partial x_3} - c x_3^2/2 - b x_3)|_{\{\frac{\partial^2 F}{\partial x_3^2} - cx_3 = b\}}$, $(\frac{\partial F}{\partial x_3} - c x_3^2/2 - b x_3)|_{\{\frac{\partial^3 F}{\partial x_3^3} = c\}\cap\{\frac{\partial^2 F}{\partial x_3^2} - cx_3 = b\}}$ and $(\frac{\partial F}{\partial x_3} - c x_3^2/2 - b x_3)|_{\{x_3 = 0\}\cup\{ x_3 = 1\}}$. 

For a choice of parameters $(a,b,c)$ as above, we have that $F_{a,b,c}\in\mathcal{N}_{\phi}$: indeed, this happens because $\frac{\partial^3 F_{a,b,c}}{\partial x_3^3} = \frac{\partial^3 F}{\partial x_3^3} - c$, $\frac{\partial^2 F_{a,b,c}}{\partial x_3^2} = \frac{\partial^2 F}{\partial x_3^2} - c x_3-b$ and $\frac{\partial F_{a,b,c}}{\partial x_3} = \frac{\partial F}{\partial x_3} - c x_3^2/2 - b x_3-a$. Clearly, $F_{a,b,c}$ is arbitrarily close to $F$, which  proves the lemma. 
\end{proof}
\ \\
By definition, if $F\in\mathcal{N}_{\phi}$, then $\mu_k:=\{\frac{\partial F}{\partial x_3(k)}=0\}\cap (U_k\cap \Xi)$ is a curve (by (i)), and $J_k:=\{\frac{\partial F}{\partial x_3(k)}=0\}\cap \{\frac{\partial^2 F}{\partial x_3(k)^2}=0\}$ is a curve intersecting the surface $\{\frac{\partial^3 F}{\partial x_3(k)^3}=0\}$ at a finite set $\Pi_k$ of points (by (ii), (iii) and (iv)). 

Note that if $(x_1,x_2,0), (x_1,x_2,1)\notin \mu_k$ (\emph{i.e.}, the orbit is transverse to the cross-sections) and the piece of orbit $(x_1,x_2,z)$, $0\leq z\leq 1$, does not intersect $J_k$. Then there is a neighborhood $V$ of $(x_1, x_2, 0)\in U_k\cap\Xi$ and a finite collection of disjoint graphs $\{(x,y,\psi_j(x,y)): (x,y,0)\in V\}$, $1\leq j\leq n$ such that if $f(x_1',x_2')=\textrm{max} F_{\phi}(x_1',x_2')=F(x_1',x_2',t')$ with $(x_1',x_2',0)\in V$, then $t'=\psi_j(x_1',x_2')$ for some $j$. 

%%%%%%%%%%%%%%%%%%%%%%%%%%%%%%%%%%%%%%%%%%%%%%%%%%%%%%%%%%
%%%%%%%%%%%%%%%%%%%%%%%%%%%%%%%%%%%%%%%%%%%%%%%%%%%%%%%%%%
%%%%%%%%%%%%%%%%%%%%%%%%%%%%%%%%%%%%%%%%%%%%%%%%%%%%%%%%%%
\begin{proof}[\emph{\textbf{Proof of Lemma \ref{Lmax}}}]
The openness of $\mathcal{B}_{\phi,\beta}^{s(u)}$ is a consequence of its own definition. Indeed, given $F\in\mathcal{B}_{\phi,\beta}^{s(u)}$, then for any $G\in C^{2}(M,\re)$ sufficiently close to $F$, we have that $\textrm{max} G_{\phi}|_{\Xi\cap R_F^{s(u)}}\in C^{1}(\Xi\cap R_{F}^{s(u)}, \re)$, therefore, we can taken $R_{G}^{s(u)}=R_{F}^{s(u)}$ and $\Delta_{G}^{s(u)}=\Delta_{F}^{s(u)}$, and this concludes the proof of openness. \\
\indent Let us to make the proof of density for the stable case. The unstable case is analogue. \\ 
By Lemma \ref{l.N}, it is sufficient to prove that $\mathcal{B}_{\phi,\beta}^{s}\cap C^r(M,\re)$, $r\geq 4$, is dense in $\mathcal{N}_{\phi}$. Observe that, in the  statements of the proof of Lemma \ref{l.N}, we consider $F\in\mathcal{N}_{\phi}$ as above. Our discussion so far says that the curves $\mu_k$ and the projections of the curves $J_k$ in the flow direction ($x_3$-coordinate) is a  finite union $J$ of $C^1$ curves contained in $\Xi$ such that, for each $y\in D_\mathcal{R}\setminus J$. The value $\textrm{max} F_{\phi}(z)$ for $z$ near $y$ is described by the values of $\textrm{max} F_{\phi}$ at a finite collection of graphs transverse to the flow direction. \\
\indent From the Lemma \ref{LIC}, given $\beta>0$ small there is a sub-horseshoe $\Delta_{J}$ such that 
$$HD(K_{J}^{s})\geq HD(K^{s})-\beta  \ \  \text{and} \ \  \Delta_{J}\cap \alpha=\emptyset,$$
for each curve $\alpha\in J$.
In other terms, using the notation in the paragraph after proof of Lemma \ref{l.N}, our task is reduced to perturb $F$ in such a way that $f(x_1',x_2')$ are given by the values of $F$ on an \emph{unique} graph $(x_1',x_2',\psi(x_1',x_2'))$. \\
In this direction, let $V$ be a small neighborhood of $\Delta_J$ such that $V\cap \alpha=\emptyset$ for every $\alpha\in J$. Note that the value of $F$ at any point $(x,y)\in V$ is described by finitely many disjoint  graphs $\psi_j$, $1\leq j\leq k$. \\
\indent 
Let $g_{1j}(x_1,x_2)=F(x_1,x_2,\psi_{1}(x_1,x_2))-F(x_1,x_2,\psi_{j}(x_1,x_2))$ for $j\neq 1$ and consider $\gamma_1>0$ small regular value of $g_{1j}$ for all $j\neq 1$. Take $\xi_1$ a $C^{\infty}$-function close to the constant function $0$ and equal to $-\gamma_1$ in neighborhood of $\{z=\psi_{1}(x_1,x_2)\}$ and $0$ outside. So, the function $F+\xi_1$ is close to $F$. Now we define the function $$g_{1j}^{\gamma_1}(x_1,x_2)=(F+\xi_1)(x_1,x_2,\psi_{1}(x_1,x_2))-(F+\xi_1)(x_1,x_2,\psi_{j}(x_1,x_2))=g_{1j}(x_1,x_2)-\gamma_1.$$ 
Put $F_1:=F+\xi_{1}$ and define $g_{2j}(x_1,x_2)=F_1(x_1,x_1,\psi_2(x_1,x_2))-F_{1}(x_1,x_2,\psi_{j}(x_1,x_2))$ for $j\neq 2$ and let $\gamma_2>0$ small regular value of $g_{2j}$ for all $j\neq2$. Take $\xi_2$ a $C^{\infty}$-function close to the constant function $0$ and equal to $-\gamma_2$ in neighborhood of $\{z=\psi_{2}(x_1,x_2)\}$ and $0$ outside. So, the function $F_{1}+\xi_2$ is close to $F$ and again, define the function $$g_{2j}^{\gamma_2}(x_1,x_2)=(F_{1}+\xi_2)(x_1,x_2,\psi_{2}(x_1,x_2))-(F_{1}+\xi_2)(x_1,x_2,\psi_{j}(x_1,x_2))=g_{2j}(x_1,x_2)-\gamma_2.$$
Inductively, define $F_{s-1}=F_{s-2}+\xi_{s-1}$ and $$g_{sj}(x_1,x_2)=F_{s-1}(x_1,x_2,\psi_{s}(x_1,x_2))-F_{s-1}(x_1,x_2,\psi_{j}(x_1,x_2))$$
for $j\neq s$. Let $\gamma_s>0$ small regular value of $g_{sj}$ for all $j\neq s$. Take $\xi_s$ a $C^{\infty}$-function close to the constant function $0$ and equal to $-\gamma_s$ in neighborhood of $\{z=\psi_{s}(x_1,x_2)\}$ and $0$ outside. So, the function $F_s:=F_{s-1}+\xi_s$ is close to $F$ and $$g_{sj}^{\gamma_s}(x_1,x_2):=F_{s}(x_1,x_2,\psi_{s}(x_1,x_2))-F_{s}(x_1,x_2,\psi_{j}(x_1,x_2))=g_{sj}(x_1,x_2)-\gamma_s.$$ 

\noindent Therefore, for each $s=1,\dots,k-1$,  $\Gamma_{s}:=\ds\bigcup_{j\neq s}(g_{sj}^{\gamma_s})^{-1}(0)$ is a finite collection of $C^1$ curves in $\Xi$, $1\leq s\leq k-1$. Put $\Gamma:=\ds\bigcup_{s=1}^{k-1}\{\Gamma_{s}\}$, then by Lemma \ref{LIC} there is a sub-horseshoe $\Delta_{\Gamma}$ of $\Delta_{\alpha}$ such that 
\begin{equation}\label{ED}
HD(K_{\Gamma}^{s})\geq HD(K_{\delta}^{s})-\beta \geq HD(K^{s})-2\beta \ \ \text{and} \ \ \Delta_{\Gamma}\cap \gamma=\emptyset,
\end{equation}
for each $\gamma\in \Gamma$.\\
\indent To finish the proof, consider the perturbation  $F+\xi^{k}$ of $F$, where  $\xi^{k}:=\xi_1+\cdots+\xi_{k-1}$, then if $l<j$, we have 
\begin{eqnarray*}
(F+\xi^{k})(x_1,x_2,\psi_{j}(x_1,x_2))&-&(F+\xi^{k})(x_1,x_2,\psi_{l}(x_1,x_2))=\\
(F+\xi_1+\cdots+\xi_{j})(x_1,x_2,\psi_{j}(x_1,x_2))&-&(F+\xi_1+\cdots+\xi_{l})(x_1,x_2,\psi_{l}(x_1,x_2))=\\
(F+\xi_1+\cdots+\xi_{j})(x_1,x_2,\psi_{j}(x_1,x_2))&-&(F+\xi_1+\cdots+\xi_{l}+\cdots+\xi_j)(x_1,x_2,\psi_{l}(x_1,x_2))\\
&=& g_{jl}^{\gamma_l}(x_1,x_2).
\end{eqnarray*}
Thus, if $(x_1,x_2) \in \Delta_{\Gamma}$, then 
\begin{equation}\label{ED1}
(F+\xi^{k})(x_1,x_2,\psi_{j}(x_1,x_2))\neq (F+\xi^{k})(x_1,x_2,\psi_{s}(x_1,x_2)) \ \ \text{for all} \ \ j\neq s.
\end{equation}
So, taking a Markov partition $R_{\Gamma}$ of $\Delta_{\Gamma}$ with a diameter  small enough, for each $y\in R_{\Gamma}$ the values of $\textrm{max} (F+\xi^{k})_{\phi}$ near $y$ are described by the values of $F+\xi^{k}$ at a unique graph. Hence, for each $y\in R_{\Gamma}$, one has that $\textrm{max} (F+\xi^{k})_{\phi}(y)=F(\phi^{t(y)}(y))$ for a unique $0\leq t(y)\leq t_+(y)$ depending in a  $C^1$ way on $y$. Therefore, we conclude that  $\textrm{max} (F+\xi^{k})_{\phi}|_{\Xi\cap R_{\Gamma}}$ is a $C^{1}$-function. Therefore, the function $F+\xi^{k}\in (\mathcal{B}_{\phi,\beta}^{s}\cap C^r(M,\re))$, $r\geq 4$, which concludes the proof of the lemma.
\end{proof}

%\noindent The proof of Lemma \ref{Lmax} can be found in \cite[Lemma 18]{RM2}.
%%%%%%%%%%%%%%%%%%%%%%%%%%%%%%%%%%%%%%%%%%%%%%%%%%%%%%%%%%%%%%%%%%%%%%%%%%%%%%%%%%%%%%%%%%%%%%%%%%%%%%%%%%%%%%%%
\noindent Keeping the notation of the previous Lemma we have:
%CCCCCCCCCCCCCCCCCCCCCCCCCCCCCCCCCCCCCCCCCCCCCCCCCCCCCCCCCCCCCCCCCCCCC
\begin{R}\label{Cmax}
The definition of $\mathcal{B}_{\phi,\beta}^{s(u)}$ depends on the vector field $\phi$. If $X$ is a vector field $C^{2}$-sufficiently close to $\phi$, then $\mathcal{B}_{\phi,\beta}^{s(u)}=\mathcal{B}_{X,\beta}^{s,u}$.  
% and for any $F\in \mathcal{B}_{X}$, holds that $\textrm{max}F_{X}\in C^{1}(\Xi\cap R_{F}^{s,u},\re)$.
\end{R}
%CCCCCCCCCCCCCCCCCCCCCCCCCCCCCCCCCCCCCCCCCCCCCCCCCCCCCCCCCCCCCCCCCCCCC
%\begin{proof}
%As time $t_{+}(x)$ the time of return of $x\in \Xi$ to $\Xi$ by the flow $\phi$ is bounded, then $X$  is a vector field sufficiently $C^{1}$-close to $\phi$, we have that for $x\in \Xi$ the orbits $\phi^{t}(x)$ and $X^{t}(x)$ of the vector field $\phi$ and $X$ by $x$ respectively, are close.\\
%\indent Let $\varphi$, $\tilde{\varphi}$ be diffeomorphism given by The Large Tubular Flow Theorem (for $\phi$ and $X$, respectively as in Remark \ref{R4}). Let $F\in C^{\infty}(SM,\re)$, then $F\circ\varphi$, $F\circ \tilde{\varphi}$ are $C^{0}$-close. Moreover,  $\frac{\partial}{\partial x_{3}}F\circ\varphi(x)=DF_{\varphi(x)}D\varphi_{x}(0,0,1)=DF_{\varphi(x)}(\phi(\varphi(x)))$ and $\frac{\partial}{\partial x_{3}}F\circ\tilde{\varphi}(x)=DF_{\tilde{\varphi}(x)}D\tilde{\varphi}_{x}(0,0,1)=DF_{\tilde{\varphi}(x)}(X(\tilde{\varphi}(x)))$. Since $\phi$ is $C^{1}$-close of $X$, then $F\circ\varphi|_{\{(x_1,x_2)\}\times (a,b)}$ is $C^{2}$-close of $F\circ \tilde{\varphi}|_{\{(x_1,x_2)\}\times (a,b)}$.\\
%\indent Suppose that $F\in B_{\phi}$, then by Lemma \ref{Lmax} there is a sub-horseshoe $\Delta_{F}$ of $\Delta$ and Markov partition $R_{F}$ of $\Delta_{F}$ such that $\textrm{max} F_{\phi}|_{\Xi\cap R_{F}}$ is $C^{1}$. Thus by construction in proof of Lemma \ref{Lmax} we have that $\textrm{max} F_{X}|_{\Xi\cap R_{F}}$ is $C^{1}$.
%\end{proof}
\subsubsection{The set ${\mathcal{U}}_{X,\Lambda}$}\label{Descript of function} 
Given a compact hyperbolic set $K$ for $\cR$ and a Markov partition $R$ of $K$, we define the set 
\begin{equation}\label{E-Descrip}
{H}_{1}(\cR,K)=\left\{f\in C^{1}(\Xi\cap R,\re):\#M_{f}(K)=1, \  z\in M_{f}(K), \ D\cR_{z}(e_{z}^{s,u})\neq 0\right\},
\end{equation}
where $M_{f}(K):=\{z\in K: f(z)\geq f(x)\ \text{for all} \ x\in K\}$, the set of maximum points of $f$ on $K$ and $e_{z}^{s,u}$ are unit vectors in $E^{s,u}_{\Xi}(z)$, respectively (cf. \cite[section 3]{RM}).\\
\indent Note also, by Remark \ref{R_pert}, that for any $X \in \mathfrak{X}^{2}_{\omega}(M)$ sufficiently close of $\phi$, we have that $HD(\Delta_{X})>1$. Thus, we have\begin{Defi}
We say that  $F\in\mathcal{U}_{X,\Lambda}\subset C^2(M,\re)$, whenever 
\begin{itemize}
\item[\emph{(i)}] There exists a sub-horseshoe $\Delta_F$ of $\Delta_X$ with $HD(\Delta_F)>1$ and neighborhood $R_F$ of $\Delta_F$ such that 
$$\textrm{max}F_{X}|_{\Xi\cap R_F}\in C^1(\Xi\cap R_F,\re).$$
\item[\emph{(ii)}] $\textrm{max}F_{X}\in H_1(\cR_{X},\Delta_F)\subset C^{1}(\Xi\cap R_F,\re)$. 
\end{itemize}
\end{Defi}
%\textcolor{red}{ONDE EST\'A DEFINIDO E PRECISA DE $C^4$? 09/02/21 - 02:26  }
\begin{Le}\label{set of functions}
The set ${\mathcal{U}}_{X,\Lambda}$ is dense and $C^2$-open set. 
\end{Le}
\begin{proof}
By definition the set ${\mathcal{U}}_{X,\Lambda}$ is  open. By Lemma \ref{Lmax} our task is simply to prove that ${\mathcal{U}}_{X,\Lambda}$ is dense in ${\mathcal{B}}_{X,\beta}^{s}\cup {\mathcal{B}}_{X,\beta}^{u}$ for some $\beta$ small enough. Indeed, fix $\beta>0$ small enough such that $(HD(\Delta_X)-4\beta)>1$  and let $F\in {\mathcal{B}}_{X,\beta}^{s}\cup {\mathcal{B}}_{X,\beta}^{u}$, then by Lemma \ref{Lmax}, consider the sub-horseshoe $\Delta_F=\Delta_{F}^{s}\cup \Delta_{F}^{u}$ and $R_F=R_F^s\cup R_F^{u}$, therefore by definition of ${\mathcal{B}}_{X,\beta}^{s}\cup {\mathcal{B}}_{X,\beta}^{u}$ we can conclude that 
$$ HD(K_F^s)+HD(K_F^u)\geq HD(K^s_{X})+HD(K^u_{X})-4\beta.$$
Thus $$HD(\Delta_F)\geq HD(\Delta_X)-4\beta>1,$$
since  $HD(\Delta_X)=HD(K^s_{X})+HD(K^u_{X})$ (cf. \cite{PT}) and Remark \ref{R_pert}. 
To conclude the proof, we need some appropriate perturbation of $F$ to become   $\textrm{max}F_{X}$ an element of $ H_1(\cR_{X},\Delta_F)$. Consider a point $x\in \Delta_F$. Recall that, in a small neighborhood of $x$, the values of $\textrm{max}F_{X}$ are given by the values of $F$ on a graph $(x_1 , x_2 , \psi(x_1 , x_2 ))$. Now we can employ the argument of Section 3 in \cite{RM}  to
find arbitrarily small function $g(x_1,x_2)$ such that the functions $F_g (x_1 , x_2 , t) :=
F (x_1 , x_2 , t) +g(x_1,x_2)$ near the graph $(x_1 , x_2 ,\psi(x_1 , x_2 ))$ (and coinciding with $F$
elsewhere) with the property that  $\textrm{max}(F_g)_{X}$ is an element of $ H_1(\cR_{X},\Delta_F)$, as we wished. 
\end{proof}
 
\begin{proof}[\bf Proof of  Theorem \ref{Theorem 2}]
We consider a family of perturbations $X_{\theta}$ of $\phi$ as Subsection \ref{Family of Pert}. Note also, by Remark \ref{R_pert} $HD(\Delta_{X_\theta})>1$. Let $F\in\mathcal{U}_{X_{\theta},\Lambda}$, then 
there exists a sub-horseshoe $\Delta_F$ of $\Delta_{X_\theta}$ with $HD(\Delta_F)>1$ and neighborhood $R_F$ of $\Delta_F$ such that 
$\textrm{max}F_{X_\theta}|_{\Xi\cap R_F}\in H_1(\cR_{X_{\theta}},\Delta_F)\subset C^{1}(\Xi\cap R_F,\re)$. The Lemma \ref{R1-Sec6} provides that the pair $(X_{\theta}, \cR_{\theta})$ has the property $V$, then by Main Theorem of \cite{RM} we can conclude 

$$\text{int}\, M(\cR_{\theta}, \Delta_{X_{\theta}},\textrm{max} F_{X_\theta}|_{\Xi\cap R_F})\neq \emptyset \ \ \text{and} \ \ \text{int}\, L(\cR_{\theta}, \Delta_{X_{\theta}},\textrm{max} F_{X_\theta}|_{\Xi\cap R_F})\neq \emptyset.$$
The property of persistence  is also a consequence of Lemma \ref{R1-Sec6} and the Main Theorem of \cite{RM}.
This completes the proof of Theorem \ref{Theorem 2} (and,  \emph{a fortiori},  Theorem \ref{Theorem 1}).
\end{proof}
%%%%%%%%%%%%%%%%%%%%%%%%%%%%%%%%%%%%%%%%%%%%%%%%%%%%%%%%

%\textcolor{red}{PAREI AQUI 10/02/2021 - 02:40}
%%%%%%%%%%%%%%%%%%%%%%%%%%%%%%%%%%%%%%%%%%%%%%%%%%%%%%%%
%%%%%%%%%%%%%%%%%%%%%%%%%%%%%%%%%%%%%%%%%%%%%%%%%%%%%%
\subsection{Anosov Geodesic flow and Anosov suspension flow}\label{Sec 4.3}
%%%%%%%%%%%%%%%%%%%%%%%%%%%%%%%%%%%%%%%%%%%%%%%%%%%%%
%%%%%%%%%%%%%%%%%%%%%%%%%%%%%%%%%%%%%%%%%%%%%%%%%%%%%
%%%%%%%%%%%%%%%%%%%%%%%%%%%%%%%%%%%%%%%%%%%%%%%%%%%%%
In this section,  we prove Corollary \ref{C1-Theorem 1} and Corollary \ref{C2-Theorem 1}  using  Theorem \ref{Theorem 1}.
\subsubsection{Proof of Corollary \ref{C1-Theorem 1}}\label{PC1}
We can note that when the manifold $M$ is the unitary tangent  bundle of a complete Riemannian manifold $N$ endowed with a metric $g_0$ of negative pinched curvature, then the geodesic flow, $\phi^t_{_{0}}$, on $SN$ is Anosov. In order to use  Theorem \ref{Theorem 1}, we need to construct a basic set for $\phi^{{t}}_{_{0}}$ with Hausdorff dimension greater than $2$. For this sake we used the following theorem:\\

%We denote by $\phi_0$ the vector field which derivative of the geodesic flow.

%We want to apply the Theorem \ref{Theorem 1}, for this sake, we need a basic set with large  Hausdorff dimension  (in this case, arbitrarily close to $3$, see Lemma \ref{Dodson}). The fundamental result to achieve this goal is the following theorem which was proved by S.G. Dani (cf. \cite{D1} and \cite{D}). In this section, we denote by $\phi_0$ the vector field which derivative of the geodesic flow. 
%\ \\

\noindent{\textbf{Theorem (\cite{D1} and \cite{D})}\label{T1GCS}
 \emph{Let $N$ be a complete Riemannian manifold of finite volume of dimension $n$, such that all the sectional curvatures are bounded between two negative constants. Let $C$ be the set of points in $SN$ whose orbit through of the geodesic flow is bounded. Then the Hausdorff dimension of $C$, $HD(C)$, is equal to $2n-1$.}\\
 
As a corollary of the above theorem we have:

\begin{Le}\label{Dodson}
In the same conditions of the last theorem, there exists a hyperbolic set $\Lambda$ for the geodesic flow $\phi^{t}_{_0}\colon  SM \to SM$ with $HD(\Lambda)$  arbitrarily close to $2n-1$.
\end{Le} 
\begin{proof}
Fixed a point $p\in SM$ and consider the family of closed balls, $\Omega_{k}:=B_{k}(p)$, of center $p$ and radius $k\in \mathbb{N}$, .\\
%\item If $a<b$, then $\Omega_{a}\subset \Omega_{b}$.
 %\item $\Omega_{\alpha}\nearrow SM$, this is, $\ds \bigcup_{a \in \mathbb{R}^{+}} \Omega_{a}=SM$.
%\end{enumerate}
%Let
%\begin{eqnarray*}
%\ds\phi^{t}_{0}:&\mathbb{R}\times  SM& \longrightarrow SM \\ 
%\ds &(t,x)&\longmapsto \phi^{t}_{0}(x)
%\end{eqnarray*}
%be the geodesic flow.
%\\
\indent Put $\ds \widetilde{{\Omega}}_{k}=\bigcap _{t\in\mathbb{R}} \phi^{t}_{0}(\Omega_{k})$, then we have the following statement:
$$\ds C \subset \bigcup_{k \in \mathbb{N}} \widetilde{\Omega}_{k},$$
\noindent where $C$ is given in the previous theorem. Indeed, let $x\in C$, then there exists a compact set $K_x$ such that the orbit of $x$, $\ds O(x) \subset K_{x}\subset \Omega_{k_0}$ for some $k_0\in \mathbb{R}^{+}$. This implies that 
$\phi^{t}_{_{0}}(x)\in \Omega_{k_0}$ for all $t\in \mathbb{R}$, therefore $x\in\widetilde{\Omega}_{k_0}$ and the statement is proved.\\
%\indent By the property $1$, $\widetilde{\Omega}_{{k}}\subset \widetilde{\Omega}_{{k+1}}$ for each $k\in \mathbb{N}$. Hence

%$$C\subset \bigcup^{\infty}_{k=1}\widetilde{{\Omega}}_{{k}},$$

\noindent Now, since $HD(C)=2n-1$, then $\ds\sup_{k\in \mathbb{N}} HD(\widetilde{{\Omega}}_{{k}})=2n-1$, therefore there exists $k_1\in \mathbb{N}$ such that $HD(\widetilde{{\Omega}}_{{k_1}})$ is arbitrarily close to $2n-1$.\\
%%%%%%%%%%%%%%%%%%%%%%%%%%%%%%%%%%%%%%%%%%%%
\indent Notice that $\ds\widetilde{{\Omega}}_{k_1}$ is a compact and $\phi^{t}_{_{0}}$-invariant set. Moreover, since $\phi^{t}_{_{0}}$ is an Anosov flow on $SM$, then $\ds\widetilde{{\Omega}}_{k_1}$ is hyperbolic set for geodesic flow $\phi^{t}_{_{0}}$. Thus, we can take  
\begin{equation}\label{Def of L}
\Lambda:=\ds\widetilde{{\Omega}}_{k_0} \ \text{and} \ HD(\Lambda)\sim 2n-1 \ \ (\text{arbitrarily close to}\ \  2n-1).
\end{equation}
%Since volume of $N$ is finite, we have that the periodic orbits of the geodesic flow is dense, and therefore $\Lambda$ can be taken with periodic orbits. 

%%%%%%%%%%%%%%%%%%%%%%%%%%%%%%%%%%%%%%%%%%%%%%%%%%
\begin{R}\label{R1-Dodson}
If $N$ is a surface, then the hyperbolic set $\Lambda$, given by the  \emph{Lemma \ref{Dodson}}, has Hausdorff dimension arbitrarily close  to $3$.
Note also that, if $N$ is a $C^r$-Riemannian manifold \emph{(}the Riemannian metric is $C^r$\emph{)} with finite volume, then the \emph{Liouville} measure is preserved by the geodesic flow $\phi^t_{_{0}}$. Therefore, if $\phi_0$ denotes the vector field which derivative  from the geodesic flow, then  $\phi_{_{0}}\in \mathfrak{X}^{r-1}_{w}(SN)$ \emph{(cf. \cite{P})}. 
\end{R}
The proof of the following corollary is based on classical arguments used to construct basis sets. %We give the references, but we omitted the complete proof. 
\end{proof}
\begin{C}\label{C5} In the case of surface, let $\Lambda$ be the hyperbolic set given by \emph{Lemma \ref{Dodson}}. Then,  there is a basic set $\tilde{\Lambda}$ with $\Lambda \subset\tilde{\Lambda}$. 
\end{C}
\begin{proof}
Note that, by Corollary \ref{C3}, the hyperbolic set $\Lambda$ is one-dimensional, then by similar arguments of Proposition 8 at \cite{Gelf-Burn}, which is based on the argument of Anosov \cite{Anosov2}, we can concluded the proof of corollary. 
\end{proof}
\begin{proof}[\emph{\textbf{Proof of Corollary \ref{C1-Theorem 1}}}]
Simply note that by Remark \ref{R1-Dodson} and Corollary \ref{C5} the basic set $\tilde{\Lambda}$ fits the hypotheses of the Theorem \ref{Theorem 1}, since $HD(\tilde{\Lambda})\geq HD(\Lambda)>2$. In other words, the result of the corollary is an immediate consequence of Theorem \ref{Theorem 1}.
\end{proof}

% Therefore, write $\mathfrak{X}^{1}_{w}(SN)$ for the space of $C^1$ conservative vector fields, and denote by $\phi_0\in \mathfrak{X}^{1}_{w}(SN)$ the vector field on the unitary tangent bundle $SN$ which is the derivative of the geodesic flow for the metric $g_0$ (the geodesic flow is a conservative flow, because it is preserved the Louville measure). In this conditions we have that:\\
%%%%%%%%%%%%%%%%%%%%%%%%%%%%%%%%%%%%%%%%%%%%%%%%%%%%
\subsubsection{Proof of Corollary 2}\label{PC2}
%%%%%%%%%%%%%%%%%%%%%%%%%%%%%%%%%%%%%%%%%%%%%%%%%%%%%%%%
Similar to  Subsection \ref{PC1}, to prove  Corollary \ref{C2-Theorem 1}, we have to find a hyperbolic set with Hausdorff dimension arbitrarily close  to $3$ and then use the Theorem \ref{Theorem 1}. For this purpose, we used the following theorem:\\
\ \\
%TTTTTTTTTTTTTTTTTTTTTTTTTTTTTTTTTTTTTTTTTTTTTTT 
\textbf{Theorem (Urba\'nski, \cite{Urbanski})}\,\emph{If $M$ is a compact Riemannian manifold and $\varphi \colon M \to M$ is a transitive Anosov diffeomorphism, then the Hausdorff dimension of the set of points with non-dense (full) orbit under $\varphi$ equals dim $M$. The same statement is true for Anosov flows.}\\

%TTTTTTTTTTTTTTTTTTTTTTTTTTTTTTTTTTTTTTTTTTTTTTTTTTTTT
As an immediate consequence,
%\begin{Le}\label{Le1} Let $\phi^{t}$ be a conservative Anosov flow, then there exists a hyperbolic set $\Lambda\subsetneq M$ such that  
%$HD({\Lambda})$ is arbitrarily close to $3,$
%where $HD(A)$ is the Hausdorff dimension of set $A$.
%\end{Le}
\begin{Le}\label{Le1'}
If $\varphi$ is an Anosov diffeomorphism on a compact surface $N$, then there is a basic  set $\Lambda$ with Hausdorff dimension arbitrarily close to $2$.
\end{Le}
\begin{proof}
Consider $\{x_k\}$ an enumerable and dense set, then for each $m\in \mathbb{N}$, we define the set $A_{m}^{k}:=N\setminus B_{\frac{1}{m}}(x_k)$, where $B_{\frac{1}{m}}(x_k)$ is the open ball of center $x_k$ and radius $\frac{1}{m}$. We consider the compact invariant set  $\tilde{A}_{m}^{k}:= \ds\bigcap_{n\in \mathbb{Z}}\varphi^{n}(A_{m}^{k})$, which is hyperbolic set for $\varphi$, since $\varphi$ is Anosov.
\\
\noindent {Claim:} If $\mathcal{ND}$ is the set of points with non-dense orbit under $\varphi$, then
$$\mathcal{ND}= \ds \bigcup_{m\geq 1} \bigcup_{k}\tilde{A}_{m}^{k}.$$
\begin{proof}[\emph{Proof of Claim}]
We need to prove simply that 
$\mathcal{ND}\subset \ds \bigcup_{m\geq 1} \bigcup_{k}\tilde{A}_{m}^{k}$, indeed:  let $x\in \mathcal{ND}$, then there is an open set $U\subset N$ such that the orbit of $x$,  $O(x)$ does not intersect $U$, \emph{i.e.} $O(x)\cap U=\emptyset$. Thus, there are $m\geq 1$ and $x_k$ such that $B_{\frac{1}{m}}(x_k)\subset U$, therefore $\varphi^{n}(x)\notin B_{\frac{1}{m}}(x_k)$ or $\varphi^{n}(x)\in N\setminus B_{\frac{1}{m}}(x_k)$, for all $n\in \mathbb{N}$, this implies that $x\in \tilde{A}_{m}^{k}.$ 
\end{proof}

\noindent Since Anosov diffeomorphisms on surface are transitive (\cite{K}), then the Urba\'nski's Theorem implies that $HD(\mathcal{ND})=2$, then by the previous Claim, $$2=HD(\ds \bigcup_{m\geq 1} \bigcup_{k}\tilde{A}_{m}^{k})=\ds \sup_{k,m}HD(\tilde{A}_{m}^{k}).$$ 
So, there are $m_{0}$ and $k_{0}$ such that $HD(\tilde{A}_{m_{_{0}}}^{k_0})$ is arbitrarily close to $2$. Note also that $\tilde{A}_{m_{_{0}}}^{k_0}$ is a hyperbolic set which is zero-dimensional, therefore by \cite{Anosov2}, there is a basic set $\Lambda$ such that $\tilde{A}_{m_{_{0}}}^{k_0}\subset \Lambda$ and therefore $HD(\Lambda)$ is arbitrarily close to $2$, as we wanted. 
\end{proof} 

%\begin{Le}\label{L-maximal-set} If $\varphi$ is a Anosov diffeomorphism on a compact surface $N$, then there is a basic set $\Lambda$ with Hausdorff dimension arbitraruly close to $2$.  If $\Lambda$ is a hyperbolic set for a transitive bi-dimensional Anosov diffeomorphism, then $\Lambda$ is zero-dimensional and therefore there is a basic set $\tilde{\Lambda}\supset\Lambda$.
%\end{Le}
%\begin{proof}
%The condition zero-dimensional is clear, therefore by \cite{Anosov2}, we concluded the result. 
%\end{proof}

The next step is to prove Corollary \ref{C2-Theorem 1}. For this goal, our task is to use the basic set $\tilde{\Lambda}$ of Lemma \ref{Le1'}  to construct the set of functions $\mathcal{U}_{\varphi}$ of the statement of Corollary \ref{C2-Theorem 1}. The construction of set $\mathcal{U}_{\varphi}$ will be similar to the construction of $\mathcal{U}_{X,\Lambda}$ at section  \ref{Descript of function}, being that we use $N$ instead of $\Xi$. \\

Let $\varphi_0$ be a $C^2$ Anosov diffeomorphism  of a compact surface  $N$ and $\Lambda_0$ the basic set given by the Lemma \ref{Le1'}. Consider $\mathcal{W}_{0}$ a $C^2$ neighborhood of $\varphi_0$ such that, for each $\varphi\in \mathcal{W}_{0}$, the basic set $\Lambda_0$ has a hyperbolic continuation $\Lambda_{\varphi}$. Note that $\Lambda_{\varphi}$ is basic set  and, by the $C^2$-topology, the $HD(\Lambda_{\varphi})$ is also arbitrarily close to $2$ (cf. \cite{PT}).\\
% which is basic  we consider The For suspension flows our manifold $M$ depend of the diffeomorphism $\psi\colon N\to N$, because $M=M_{\psi}:=(N\times \re) /\sim $, where we identify $(x,s)\sim (\psi(x),s+1)$. Also, from now on, we consider  $\psi$ sufficiently close  $\varphi$, such that there exists the hyperbolic continuation $\Lambda_\psi$  of $\tilde{\Lambda}$.
\ \\
Let $\varphi\in \mathcal{W}_{0}$, then by the same notation of Subsection \ref{main-results}, we have 
\begin{Defi}
We say that  $F\in\mathcal{U}_{\varphi}\subset C^2(N_\varphi,\re)$,  whenever
\begin{itemize}
\item[\emph{(i)}] There exists a sub-horseshoe $\Lambda_F$ of $\Lambda_\varphi$ with $HD(\Lambda_F)>1$ and neighborhood $R_F$ of $\Lambda_F$ such that 
$$\textrm{max}F_{\psi_{_{\varphi}}}|_{R_F}\in C^1(R_F,\re),$$
where $\textrm{max} F_{\psi_{_{\varphi}}}(x):=\max_{0\leq t \leq 1}F(\psi_{_{\varphi}}^{t}(x))$.
\item[\emph{(ii)}] $\textrm{max}F_{\psi_{_{\varphi}}}\in H_1(\varphi,\Lambda_{F})$,
where $H_1(\varphi,\Lambda_{F})$ is defined analogously to  $(\ref{E-Descrip})$. 
\end{itemize}
\end{Defi}
Following the same lines of Subsection \ref{Descript of function}, more precisely, the proof of Lemma \ref{set of functions} we have,

\begin{Le}\label{Desc-Susp}
For each $\varphi\in \mathcal{W}_{0}$, the set $\mathcal{U}_{\varphi}$ is dense and $C^2$-open set.
\end{Le}

%Now, we have elements to present the following proof. 
\begin{proof}[\emph{\textbf{Proof of Corollary  \ref{C2-Theorem 1}}}]
By Theorem \ref{MY1} in Appendix \ref{SIRCS} (see \cite{MY} and \cite{MY1} for more details), we can assume by Lemma \ref{Le1'} that for a small perturbation $\varphi\in \mathcal{W}_{0}$ of $\varphi_0$ in the $C^2$ topology, the pair $(\varphi,\Lambda_\varphi)$ has the property $V$. Let $F\in\mathcal{U}_{\varphi}$, then by the main theorem at \cite{RM}, we have that 
$$\text{int}\, M(\psi,\Lambda_{F}, \textrm{max}F_{\psi_{_{\varphi}}}|_{R_F})\neq \emptyset \ \ \text{and} \ \ \text{int}\, L(\psi,\Lambda_{\psi}, \textrm{max}F_{\psi_{_{\varphi}}}|_{R_F})\neq \emptyset.$$
The proof ends simply by observing that

$$\limsup_{n\to +\infty}\textrm{max}F_{\phi_{_{\varphi}}}(\varphi^n(x))=\limsup_{t\to + \infty}F(\psi^t_{_{\varphi}}(x))$$
and 
$$\sup_{n\to +\infty}\textrm{max}F_{\phi_{_{\varphi}}}(\varphi^n(x))=\sup_{t\to + \infty}F(\psi^t_{_{\varphi}}(x))$$
for all $x\in \Lambda_{\Lambda_F}$.

\end{proof}
To finish this section, we note that, if $\varphi_{0}$ is a $C^2$ conservative Anosov diffeomorphism, then the proof of Corollary \ref{C2-Theorem 1} and Subsection \ref{SBI}, allows us  to conclude that the Remark \ref{R1C2} is valid.

%%%%%%%%%%%%%%%%%%%%%%%%%%%%%%%%%%%%%%%%%%%%%%%%%%%%%%%%%%%%%%%%%%%%%%%%%%%%%%%%%%%%%%%%%%%%%%%%%%%%%%%%%%%%%
\section{Appendix}\label{Ap}
%%%%%%%%%%%%%%%%%%%%%%%%%%%%%%%%%%%%%%%%%%%%%%%%%%%%%%%%%%%%%%%%%%%%%%%%%%%%%%%
%%%%%%%%%%%%%%%%%%%%%%%%%%%%%%%%%%%%%%%%%%%%%%%%%%%%%%%%%%%%%%%%%%%%%%%%%%%%%%%%%%%%%%%%%%%%%%%%%%%%%%%%%%%%%%%%%%%%%%%%%%%%%%%%%%%%%%%%%%%%%%%%%%%%%%%%%%%%%%%%%%%%%%
\subsection{Proof of Separation Lemma \ref{L7}}\label{App - Separation}
%LLLLLLLLLLLLLLLLLLLLLLLLLLLLLLLLLLLLLLLLLLLLLLLLLLLLLLLLLLLLLLLLLLLLLLLLLLLLLLLLLLLLLLLLLLLLLLLLLLLLLLLLLLLLLLLLLLLLL
%In this section we prove the separation Lemma \ref{L7}. 
%\begin{Le}\label{L7}
%There is $m\in \mathbb{N}$ and GCS $\widetilde{\Sigma}_{i}$, $i=1,\dots, m$ such that 

%\begin{equation}\label{eq6}
%\ds\Lambda\subset \bigcup^{m}_{i=1}\phi^{(-2\gamma,2\gamma)}(int(\widetilde{\Sigma}_i))
%\end{equation}
%with $\widetilde{\Sigma}_i\cap \widetilde{\Sigma}_j=\emptyset$.
%\end{Le}
%LLLLLLLLLLLLLLLLLLLLLLLLLLLLLLLLLLLLLLLLLLLLLLLLLLLLLLLLLLLLLLLLLLLLLLLLLLLLLLLLLLLLLLLLLLLLLLLLLLLLLLLLLLLLLLLLLLLLLL
%\noindent In other words, the above lemma stated that the Good Cross-Sections in (\ref{E6GCS}) can be taken pairwise disjoint. 
In order to prove Lemma \ref{L7} we need to understand what happens when two GCS as in relation (\ref{E6GCS}) intersect.\\%, so the rest of this section is to do that.\\
\ \\
Note that if two sections $\Sigma_i$, $\Sigma_j$ has nonempty intersection, then we can consider two disjoint cases: %by Proposition \ref{P3} we have 
\begin{itemize}
\item[1.] The intersection $\Sigma_i\cap\Sigma_j$ is totally transverse to the foliation $\mathcal{F}^{s}$, \emph{i.e.} for any $x\in \text{int}\,\Sigma_i\cap \text{int}\,\Sigma_j$ there is a $C^0$-curve $\xi_x\subset \text{int} \,\Sigma_i\cap \text{int}\,\Sigma_j$ which is  transverse to $\mathcal{F}^{s}$, then the Proposition \ref{P3} implies that $\textrm{int}\,\Sigma_i\cap \textrm{int}\,\Sigma_j$ is an open set of $\Sigma_i$ and $\Sigma_j$. %and Lemma \ref{Le7} implies the separation. 
\item[2.] The intersection $\Sigma_i\cap\Sigma_j$ does not is totally transverse to the foliation $\mathcal{F}^{s}$, \emph{i.e.}, there may be points in $\Sigma_i\cap\Sigma_j$ in the following two situations: 
\begin{itemize}
\item[(i)] For every point $x \in \Sigma_i\cap\Sigma_j$ there is not a curve $\xi_x\subset \textrm{int} \,\Sigma_i\cap \textrm{int}\,\Sigma_j$ transverse to $\mathcal{F}^{s}$, this implies $\Sigma_i\cap\Sigma_j$ does not contains open sets of $\Sigma_i$ and $\Sigma_j$. 
\item[(ii)] The intersection $\Sigma_i\cap\Sigma_j$  contains an open set of $\Sigma_i$ and $\Sigma_j$ and also contains  points as in (i).
\end{itemize}
\end{itemize}

The next task is to understand the  cases (i) and (ii). First, we let us make the separation in the Lemma \ref{L7} when all intersections of the sections $\Sigma_{i}$ satisfies condition $1$. After that, we will make the separation in Lemma \ref{L7} when appears  intersections in conditions $1$ or $2$.

%LLLLLLLLLLLLLLLLLLLLLLLLLLLLLLLLLLLLLLLLLLLLLLLLLLLLLLLLLLLLLLLLLLLLLLLLLLLLLLLLLLLLL
\begin{Le}\label{L6GCS}
Let $B_i=\{j: \Sigma_i\cap \Sigma_j\neq \emptyset \ \ \text{and} \ \ \Sigma_{i}, \, \Sigma_{j} \ \ \text{satisfies the condition 1} \}$.
%If $\Sigma_{i}\cap \Sigma_{j}\neq \emptyset$ for some $i,j\in \{1,\dots,l\}$, the sections  $\Sigma_{i}$ and $\Sigma_{j}$ satisfies the condition 1.
 Then there is $\delta^{\prime}>0$ such that for every $j\in B_i$, $\phi^{\delta}(\Sigma_{i})\cap \Sigma_{j}=\emptyset$ for all $0<\delta\leq\delta^{\prime}$. 
\end{Le}
%LLLLLLLLLLLLLLLLLLLLLLLLLLLLLLLLLLLLLLLLLLLLLLLLLLLLLLLLLLLLLLLLLLLLLLLLLLLLLLLLLLLLLL
\begin{proof}
Suppose otherwise, then for all $n$ sufficiently large, there is $z_{i}^{n}\in \Sigma_{i}$ such that $\ds\phi^{\frac{1}{n}}(z^{n}_{i})\in \bigcup_{j\in B_i}\Sigma_{j}$. Passing to a subsequence if necessary, we assume that $\phi^{\frac{1}{n}}(z^{n}_{i})\in \Sigma_{j_0}$ for some $j_0\in B_i$. Since $\Sigma_{i}$ is a compact set, we can assume that $z^{n}_{i}$ converge to $z_{i}$ as $n\to\infty$, thus $\phi^{\frac{1}{n}}(z^{n}_{i})$ converge to $z_{i}$ as $n\to \infty$. This implies that $z_{i}\in \Sigma_{i}\cap \Sigma_{j_0}$.\\
\noindent By Remark \ref{R2'}, we can assume that $z_{i}\in \text{int}\,\Sigma_{j_0}$, then by definition of $C^0$-transverse (Definition \ref{Def1}), there are $r>0$ small enough and $\eta>0$ such that $B_{r}(z_{i})$ (the open ball of radius $r$ and center $z_{i}$), satisfies 
\begin{center}
$\phi^{t}(B_{r}(z_i)\cap \Sigma_{j_0})\cap \Sigma_{j_0}=\emptyset$
\end{center}
for all $0<t\leq\eta$.\\
Moreover, since $\Sigma_{i}$ and $\Sigma_{j_0}$ satisfies the condition 1, then  we have $(B_{r}(z_i)\cap \Sigma_{i})\setminus\{z_i\}\subset \Sigma_{j_0}$. Taking  $n$ large enough such that $z^{n}_{i}\in B_{r}(z_i)\cap \Sigma_{i}$ and $\frac{1}{n}<\eta$. So $\phi^{\frac{1}{n}}(z^{n}_{i})\notin \Sigma_{j_0}$ which is a contradiction, thus we concluded the lemma.\\
%Suppose now that $z_{i}\in \partial \Sigma_{j}$. Then, by construction of GCS in the Lemma \ref{Le6}, we can find a new GCS $\Sigma^{\prime}_{j}\supset \Sigma_{j}$ such that $z_{i}\in int \Sigma^{\prime}_{j}$. So $\Sigma_{i}$ and $\Sigma^{\prime}_{j}$  behave as in the previous case and again we obtain a contradiction. 
\end {proof}
%RRRRRRRRRRRRRRRRRRRRRRRRRRRRRRRRRRRRRRRRRRRRRRRRRRRRRR
\begin{R}\label{R5'}
It is worst to note that, if $\delta_{ij}:=d(\Sigma_i,\Sigma_j)>0$, then 
$$\phi^{t}(\Sigma_i)\cap\Sigma_j=\emptyset \ \ \text{for all} \ \ 0\leq t< \delta_{ij}.$$
\end{R}
%RRRRRRRRRRRRRRRRRRRRRRRRRRRRRRRRRRRRRRRRRRRRRRRRRRRRRR
\noindent The following lemma proves that the GCS as in (\ref{E6GCS}) can be taken disjoint if all possible intersections of $\Sigma_{i}$ and $\Sigma_{j}$ satisfy the condition $1$.\\

%LLLLLLLLLLLLLLLLLLLLLLLLLLLLLLLLLLLLLLLLLLLLLLLLLLLLLLLLLLLLLLLLLLLLLLLLLLLLLLLLLLLLLLL
\begin{Le}\label{L7GCS}
Assuming $(\ref{E6GCS})$ and suppose that all possible intersections of  sections $\{\Sigma_{i}:i=1,\dots,l\}$ satisfies the condition 1. Then, there are GCS\, $\widetilde{\Sigma}_{i}$ such that $\ds \Lambda\subset \bigcup^{l}_{i=1}\phi^{(-\gamma,\gamma)}(\widetilde{\Sigma}_{i})$ with the property $\widetilde{\Sigma}_{i}\cap\widetilde{\Sigma}_{j}=\emptyset$ for all $i,j\in\{1,...,l\}$.
\end{Le}
%LLLLLLLLLLLLLLLLLLLLLLLLLLLLLLLLLLLLLLLLLLLLLLLLLLLLLLLLLLLLLLLLLLLLLLLLLLLLLLLLLLLLLLL
\begin{proof}
Consider the set $B_1=\{j:\Sigma_{1}\cap \Sigma_{j}\neq\emptyset\}$ and $\ds\delta_1=\inf_{j\notin B_1}d(\Sigma_1,\Sigma_j)$, then by Lemma \ref{L6GCS}, there exist $t_1< \min \{\delta_1, \gamma\}$ such that 
$$\phi^{t_1}(\Sigma_1)\cap \Sigma_j=\emptyset \ \ \text{for all} \ \ j\geq 1.$$
Put $\widetilde{\Sigma}_{1}:=\phi^{t_1}(\Sigma_1)$ and $\beta_2=d(\widetilde{\Sigma}_{1},\Sigma_2)$. Analogously, we consider the set $B_2=\{j\geq 2:\Sigma_{2}\cap \Sigma_{j}\neq\emptyset\}$
and $\ds\delta_2=\min \{\inf_{j\notin B_2}d(\Sigma_2,\Sigma_j), \beta_2\}$, then by Lemma \ref{L6GCS}, there exist $t_2< \min \{\delta_2, \gamma\}$ such that 
$$\phi^{t_2}(\Sigma_2)\cap \Sigma_j=\emptyset \ \ \text{for all} \ \ j\geq 2 \ \ \text{and} \ \ \phi^{t_2}(\Sigma_2)\cap \widetilde{\Sigma}_{1}=\emptyset.$$
%We call $\widetilde{\Sigma}_{2}:=\phi^{t_2}(\Sigma_2)$, 
We can continue with this process and obtain by induction a finite sequences of positive number $\delta_i, \beta_i$ and $t_i$ define by $\beta_i=\ds\min_{1\leq j<i}d(\phi^{t_j}(\Sigma_j),\Sigma_i)$, $\ds\delta_i=\min \{\inf_{j\notin B_i}d(\Sigma_i,\Sigma_j), \beta_i\}$, where $B_i=\{j\geq i:\Sigma_{i}\cap \Sigma_{j}\neq\emptyset\}$ and $t_i<\min \{\delta_i, \gamma\}$ with the properties
$$\phi^{t_i}(\Sigma_i)\cap \Sigma_j=\emptyset \ \ \text{for all} \ \ j\geq i \ \ \text{and} \ \ \phi^{t_i}(\Sigma_i)\cap \phi^{t_j}({\Sigma}_{j})=\emptyset \ \ \text{for all} \ \ j\leq i.$$
Put $\widetilde{\Sigma}_{i}:=\phi^{t_i}(\Sigma_i)$, then it is easy to see that the set of sections $\{\widetilde{\Sigma}_{1}, \widetilde{\Sigma}_{2},\dots, \widetilde{\Sigma}_{l}\}$ satisfies the conditions of lemma.
\end{proof}

The following lemma show that if two GCS satisfy  the condition 2\,(ii), then a small translate in the time on one of two sections makes the resulting sections satisfy the condition 2\,(i).

\begin{Le}\label{L7'0}
Let $\Sigma_i$, $\Sigma_j$ be as in \emph{(\ref{E6GCS})} satisfying  the condition ${2\emph{(ii)}}$, then there is $t'$ small such $\phi^{t'}(\Sigma_i)$ and $\Sigma_j$ satisfy the condition $2\,\emph{(i)}$, \emph{i.e.},  $\phi^{t'}(\Sigma_i) \cap \Sigma_j$ does not contain an open set of $\phi^{t'}(\Sigma_i)$ nor $\Sigma_j$.
\end{Le}
\begin{proof}
By contradiction, assume that for  all $t$ small enough,  we have that $\textrm{int} \, \phi^{t}(\Sigma_i)\cap \textrm{int} \, \Sigma_j$ contains an open set of $\phi^{t}(\Sigma_i)$ and $\Sigma_j$, then there is a non-degenerate interval $I^{j}_{t}\subset W_{\epsilon}^{u}(x_j)\subset \Sigma_j$ and a non-degenerate interval $I^i_t\subset {W^{u}_{\epsilon}}(x_i)\subset \Sigma_i$
such that the set 
\begin{equation}\label{EQ5'}
\Delta_t:=\bigcup_{z\in \phi^{t}(I^{j}_{t}))}W^s_{\epsilon}(z)\cap \bigcup_{w\in I^{i}_{t}}W^s_{\epsilon}(w)
\end{equation}
contains an open set of $\phi^{t}(\Sigma_i)$ and $\Sigma_j$.\ \\

\noindent \textbf{Claim:} The family of intervals $I^{j}_{t}$ is pairwise disjoint. 
\begin{proof}[\textbf{\emph{Proof of claim}}]
Otherwise, assume that there is $x\in I^{j}_{t}\cap I^j_{t'}\subset W^{u}_{\epsilon}(x_j)$ with $t\neq t'$, then by (\ref{EQ5'}) there are $y\in I^{i}_{t}$ and $z\in I^{i}_{t'}$ such that $\phi^{t}(x)\in W^{s}_{\epsilon}(y)$ and $\phi^{t'}(x)\in W^{s}_{\epsilon}(z)$. Since $t, t'$ are small, then we have that $\phi^{-t'}(z)\in W^{s}_{loc}(\phi^{-t}(y))$. Also, since $y, \, z \in W^{u}_{\epsilon}(x_i)$, then $z\in W^{s}_{2\epsilon}(y)$, which implies that $\phi^{-t'}(z)\in W^{u}_{loc}(\phi^{-t'}(y))$. Since $t'-t$ is also small, then we have 
$$\phi^{-t'}(z)\in \phi^{t'-t}(W^{u}_{loc}(\phi^{-t'}(y)))\cap W_{loc}^{s}(\phi^{-t'}(y)),$$
which implies that $t'=t$ and $z=y$, since the stable and unstable manifold theorem, which is a contradiction. 
\end{proof}

% then since we are supposed that $W^{u}_{\epsilon}(x_j)$ and $W^{u}_{\epsilon}(x_i)$ are different, then for all $r$ small $W^{u}_{\epsilon}(x_i)\cap\phi^{r}(W^{u}_{\epsilon}(x_j))=\emptyset$, this implies that $x\neq \phi^{r}(y)$ for all $r$ small.
%Note also that, $y, \, z \in W^{u}_{\epsilon}(x_i)$, then $z\in W^{u}_{\epsilon}(y)$, thus as $t\, \ t'$ are small, then we have that 
%$$\phi^{t'}(z)\in \phi^{t'-t}(W^{u}_{loc}(\phi^{t}(y)))\subset W^{cu}(\phi^{t}(y)).$$
%Therefore,  
%$$\{ \phi^{t'}(z)\}\subset W^{cu}(\phi^{t}(y))\cap W^{s}_{\epsilon}(\phi^{t}(y))\subset \mathcal{O}(y),$$
%where $\mathcal{O}(y)$ is the orbit of $y$.
%We have then, there is $\eta$ such that $\phi^{\eta}(\phi^t(y))=\phi^{t'}(z)$. However, $\phi^{t}(y),\phi^{t'}(z)\in W^{s}_{\epsilon}(x)$, so  $\eta=0$ and $\phi^t(y)=\phi^{t'}(z)$, which implies that as $z,y\in W^{u}_{\epsilon}(x_i)$ and, therefore, should be $t=t'$ and $z=y$ and this provides a contradiction. Thus, we conclude the proof of our claim.

\noindent Note that the  above  claim provides a contradiction because does not exists uncountable many disjoint non-empty open intervals $I_t^j$. Since each of them would contain a rational number, proving an uncountable family of distinct rational  numbers. So the proof is complete.

% the family of nondegenerate intervals $\{I^{j}_{t}\}_t$ of $W^u_{\epsilon}(x_j)$ is pairwise disjoint, which can not happen because $\epsilon$ is finite. 
\end{proof}
\begin{R}The last lemma implies that: always we can assume that the sections  $\Sigma_i$ and $\Sigma_j$ satisfies the condition  1) or $2\,(i)$. 
\end{R}
\noindent The next step is to understand what happens to the cross-sections that intersect as in case { 2(i)}. \\

Assume that $\Sigma_i$  and $\Sigma_j$ satisfy the condition 2(i), then  $\Sigma_i \cap \Sigma_j$ is a family of curves, $\Gamma_{ij}$. By  construction of GCS of Lemma \ref{Le6}, each curve  $c\in \Gamma_{ij}$ is a leaf of the foliation $\{\mathcal{F}^s(x)\cap \Sigma_i : x\in W^u_{\epsilon}(x_i)\}$, by abuse of notation we write $\mathcal{F}^{s}\cap \Sigma_i$. Remember that $\Sigma_i=\Sigma_{x_i}$, thus we  consider the projection $\pi^{s}_i\colon \Sigma_i\to W^u_{\epsilon}(x_i)$  along $\mathcal{F}^{s}$.%which is a continuous function.\\
\begin{Pro}\label{P2}
In the above conditions $\pi^{s}_i(\Sigma_i\cap \Sigma_j)= \{W^{s}_{\epsilon}(x)\cap  W^u_{\epsilon}(x_i)\colon x\in \Sigma_i \cap \Sigma_j\}$ is a compact set.
\end{Pro}
\begin{proof}
 Indeed, we need only to show that $\pi^{s}_i(\Sigma_i\cap \Sigma_j)$ is a closed set. Let $x_n\in \pi^{s}_i(\Sigma_i\cap \Sigma_j)$, such that $x_n \to x$, then there is $y_n\in \Sigma_i\cap \Sigma_j$ such that $W^{s}_{\epsilon}(y_n)\cap  W^u_{\epsilon}(x_i)=\{x_n\}$, since $\Sigma_i\cap \Sigma_j$ is compact, then we can assume that $y_{n_k} \to y\in \Sigma_i\cap \Sigma_j$. Moreover, by continuity of foliation $\mathcal{F}^s$, we have that $W^{s}_{\epsilon}(y_{n_k})\cap  W^u_{\epsilon}(x_i)\to \pi^{s}_{i}(y)$ and $W^{s}_{\epsilon}(y_{n_k})\cap  W^u_{\epsilon}(x_i)=\{x_{n_k}\}\to x$, so $x=\pi^{s}_{i}(y)\in \pi^{s}_i(\Sigma_i\cap \Sigma_j)$.
 \end{proof}
 
 \begin{R}
It is worth noting that the proof of the previous proposition,  actually shows that $\pi^{s}_i$ is a continuous map.
 \end{R}

%\textbf{Remark:}\textit{\,Note that, in the geodesic flow case as \cite{RM2}, $\pi^{s}_i(\Sigma_i\cap \Sigma_j)= \{W^{s}_{\epsilon}(x)\cap  W^u_{\epsilon}(x_i)\colon x\in \Sigma_i \cap \Sigma_j\}$ is a finite set, because in this case the GCS are $C^1$ and transverse.}\\
Let $x \in\pi^{s}_i(\Sigma_i\cap \Sigma_j)$, then, by  transversality of the flow with  both sections, there is $\delta>0$ such that   
$$\phi^{\delta}(W^s_{\epsilon}(x)\cap \Sigma_i)\cap \Sigma_j=\emptyset,$$
and by continuity we have that there is $U_x$ neighborhood of $W^s_{\epsilon}(x)\cap \Sigma_i$ on $\Sigma_i$ such that 
\begin{equation}\label{eq-sep}
\phi^{\delta}(U_x\cap \Sigma_i)\cap \Sigma_j=\emptyset.
\end{equation}
\noindent The neighborhood $\displaystyle U_x := \bigcup_{z\in I_x} W^{s}_{\epsilon}(z)\cap \Sigma_i$, where  $I_{x}\subset W^{u}_{\epsilon}(x_i)$ is an interval centered in $x$.\\
\indent Suppose that $\mathcal{F}^{s}(x)\cap \Sigma_{i}\cap \Lambda=\emptyset$ for some $x\in\pi^{s}_i(\Sigma_i\cap \Sigma_j)$, then since $\Lambda$ is a compact set there is an open set $V_{x}$ containing $\mathcal{F}^{s}(x)\cap \Sigma_i$ with $V_{x}\cap\Lambda=\emptyset$. Therefore, $\Sigma_{i}$ can be subdivided into two GCS, $\Sigma_{i}^{1}$ and $\Sigma_{i}^{2}$, such that $\Sigma_{i}^{r}$ and $\Sigma_j$ intersecting  as the case 2(i) for $r=1,2$. In other words, if $\mathcal{F}^{s}(x)\cap \Sigma_{i}\cap \Lambda=\emptyset$ for some $x\in\pi^{s}_i(\Sigma_i\cap \Sigma_j)$, then we return to the case 1) or 2(i) with one more section. 
\begin{R}\label{R6}
The above observation implies that, without loss of generality, we can assume that for any $x\in\pi^{s}_i(\Sigma_i\cap \Sigma_j)$ there is $p_{x}\in \mathcal{F}^{s}(x)\cap \Sigma_i\cap \Lambda$.
\end{R}

\begin{Le}\label{L8GCS}
If $\Sigma_i$ and $\Sigma_j$ are two GCS as in condition \emph{2(i)}. Given $\delta>0$, $0<\delta<\frac{\gamma}{2}$ $($with $\gamma$ as in \emph{(\ref{E6GCS})}$)$, then for $x\in \pi^{s}_i(\Sigma_i\cap \Sigma_j)$, %Let $\gamma_{i}\subset \Sigma_{x}\cap\Sigma'$, $\gamma_i\subset\mathcal{F}^{ss}(z_{i})\cap \Sigma_{x}$ and $\overline{\Sigma}_{i}$ for $i=1,\dots,k$ as above. 
there is a $GCS$, $\widetilde{\Sigma}_{x}\subset U_x\cap{\Sigma_i}$ containing $\mathcal{F}^{s}(x)\cap \Sigma_i$, such that $\Sigma_{i}$ is subdivided into three disjoint GCS, including $\widetilde{\Sigma}_{x}$. Denoted by  $\Sigma_{i}^{\#}$ the set of complementary sections of $\widetilde{\Sigma}_{x}$ in the above subdivision of $\Sigma_{i}$, then  
\begin{enumerate}
	\item[$1)$] $\phi^{\delta}(\widetilde{\Sigma}_{x})\cap \Sigma_j=\emptyset$.% and $\Sigma_j\cap \Sigma=\emptyset$ for any $\Sigma\in \Sigma_{x}^{\#}$.
	%\item $\phi^{\delta}(\widetilde{\Sigma}_{i})\cap\phi^{\delta}(\widetilde{\Sigma}_{j})=\emptyset$ for $i\neq j$ and
	%$\phi^{\delta}(\widetilde{\Sigma}_{i})\cap \Sigma_{x}=\emptyset$ for all $i\in\{1,\dots,k\}$.
	\item[$2)$] $ \Lambda\cap \phi^{(-\frac{\gamma}{2},\frac{\gamma}{2})}(\emph{int}(\Sigma_i))\subset \Lambda \cap  \left(\phi^{(-\gamma,\gamma)}\left(\phi^{\delta}(\emph{int}(\widetilde{\Sigma}_{x}))\right)\cup \displaystyle \bigcup_{\Sigma\in \Sigma_{i}^{\#}}\phi^{(-\frac{\gamma}{2},\frac{\gamma}{2})}(\emph{int}(\Sigma))\right).$
%\cup \phi^{(-\frac{\epsilon}{2},\frac{\epsilon}{2})}(\Sigma')	
\end{enumerate}
\end{Le}
\begin{proof}
\noindent By Remark \ref{R6}, we can assume that for any $x\in\pi^{s}_i(\Sigma_i\cap \Sigma_j)$ there is $p_{x}\in \mathcal{F}^{s}(x)\cap \Sigma_i\cap \Lambda$. Consider $\mathcal{F}^{u}_{loc}(p_{x})$, then by Remark \ref{R3} we can find open sets $V_{p_x}^{+}$ and $V_{p_x}^{-}$ in each side of $\mathcal{F}^{u}_{loc}(p_{x})\setminus\{p_x\}$ sufficiently close to $\mathcal{F}^{s}_{loc}(p_{x})$ with diameter sufficiently large and $V_{p_x}^{\pm}\cap \Lambda=\emptyset$. Denote by $\widetilde{V}_{p_x}^{\pm}$ the projection  along to the flow of $V_{p_x}^{\pm}$ over $\Sigma_{i}$, respectively.
Therefore, by Remark \ref{R3} we can take $\widetilde{V}_{p_x}^{\pm}$ such that $\widetilde{V}_{p_x}^{\pm}\cap \Sigma_{i}\subset U_x$ and $\widetilde{V}^{\pm}_{p_x}$ crosses $\Sigma_{i}$. Using $\widetilde{V}^{\pm}_{p_x}$ we can construct the GCS $\widetilde{\Sigma}_{x}$ such that $\widetilde{\Sigma}_{x}\subset U_x$ and by (\ref{eq-sep}), $\widetilde{\Sigma}_{x}$ satisfies the item  1) of lemma.\\
\noindent To prove item 2) note simply that $\delta<\frac{\gamma}{2}$ and $\ \phi^{(-\frac{\gamma}{2},\frac{\gamma}{2})}(\text{int}(\Sigma_i))\cap \Lambda=\phi^{(-\frac{\gamma}{2},\frac{\gamma}{2})}( \text{int}(\Sigma_{i})\cap\Lambda)$, which is a consequence of $\Lambda$ be invariant by the flow.
\end{proof}
\indent As the GCS $\widetilde{\Sigma}_{x}$ obtained in the last lemma is contained in $U_x\cap{\Sigma_i}$, then there is an interval centered in $x$, $\widetilde{I}_x\subset I_x\subset W^{u}_{\epsilon}(x_i)$ such that 
$$\widetilde{\Sigma}_x = \ds\bigcup_{z\in\widetilde{I}_x}W^{s}_{\epsilon}(z)\cap \Sigma_i.$$
Moreover, since  $\pi^{s}_i(\Sigma_i\cap \Sigma_j)\subset \bigcup \widetilde{I}_{x}$, then  the compactness $\pi^{s}_i(\Sigma_i\cap \Sigma_j)$ from Proposition \ref{P2},  there is a finite set of points $\{x^1,\dots,x^m\}\subset \pi^{s}_i(\Sigma_i\cap \Sigma_j)$  such that
$$\pi^{s}_i(\Sigma_i\cap \Sigma_j)\subset \bigcup_{r=1}^{m} {\widetilde{I}}_{x^r}.$$
Thus by first part of Lemma \ref{L8GCS}, given $\delta>0$ small enough,  holds that  
\begin{equation}\label{Eq9}
\phi^{\delta}(\widetilde{\Sigma}_{x^{r}})\cap \Sigma_j=\emptyset,  \ \ \ r=1,\dots,m \ \ \text{and} \ \ \phi^{\delta}(\widetilde{\Sigma}_{x^{r}})\cap\phi^{\delta}(\widetilde{\Sigma}_{x^{r'}})=\emptyset, \ \ r\neq r'.
\end{equation}
In the above conditions, we prove the following 
%in other words, we have the following Lemma, which is similar to \cite[Lemma 8]{RM2}
\begin{Le}\label{L9GCS}
If $\Sigma_i$ and $\Sigma_j$ are two GCS as in the condition \emph{2(i)}. Given $0<\delta<\frac{\gamma}{2}$ 
$($with $\gamma$ as in $($\emph{\ref{E6GCS}}$))$ there are GCS, $\widetilde{\Sigma}_{x^r}\subset U_{x^r}$ containing $\mathcal{F}^{s}(x^r)\cap \Sigma_i$, $r=1,\dots, m$ and such that $\Sigma_{i}$ is subdivided into $2m+1$ disjoint GCS, including 
$\widetilde{\Sigma}_{x^r}$, $r\in\{1,\dots,m\}$. Denote by  $\Sigma_{i}^{\#}$ the complement of the set $\{\widetilde{\Sigma}_{x^r}\}_{r=1}^{m}$ in the above subdivision  of $\Sigma_{i}$, then  
\begin{enumerate}
	\item[$1)$] $\phi^{\delta}(\widetilde{\Sigma}_{x^r})\cap \Sigma_j=\emptyset$, $r\in\{1,\dots,m\}$ and $\Sigma_j\cap \Sigma=\emptyset$ for any $\Sigma\in \Sigma_{x}^{\#}$.
	\item[$2)$] $\phi^{\delta}(\widetilde{\Sigma}_{x^r})\cap\phi^{\delta}(\widetilde{\Sigma}_{x^{r'}})=\emptyset$, $r\neq r'$ and
	$\phi^{\delta}(\widetilde{\Sigma}_{x^r})\cap \Sigma_{i}=\emptyset$, $r\in\{1,\dots,m\}$.
	\item[$3)$] $\Lambda\cap \phi^{(-\frac{\gamma}{2},\frac{\gamma}{2})}(\emph{int}(\Sigma_i))\subset \Lambda \cap \ds \left(\bigcup^{m}_{r=1}\phi^{(-\gamma,\gamma)} \left(\phi^{\delta}(\emph{int}(\widetilde{\Sigma}_{x^r}))\right)\cup \bigcup_{\Sigma\in \Sigma_{i}^{\#}}\phi^{(-\frac{\gamma}{2},\frac{\gamma}{2})}(\emph{int}(\Sigma))\right).$
%\cup \phi^{(-\frac{\epsilon}{2},\frac{\epsilon}{2})}(\Sigma')	
\end{enumerate}
\end{Le}
\begin{proof}
Given $0<\delta<\frac{\gamma}{2}$ small enough. The conditions 1) and 2) are  an immediate consequence of (\ref{Eq9}). To prove item 3) note simply that $\delta<\frac{\gamma}{2}$ and $\Lambda\cap \phi^{(-\frac{\gamma}{2},\frac{\gamma}{2})}(\text{int}(\Sigma_i))=\phi^{(-\frac{\gamma}{2},\frac{\gamma}{2})}(\Lambda\cap \text{int}(\Sigma_{i}))$, which is a consequence of $\Lambda$ be invariant by the flow.
\end{proof}

\begin{R}\label{R6GCS}
Let $\Sigma'$ be such that GCS such that $\Sigma'\cap \Sigma_i=\emptyset$. Then we can take $\delta<d(\Sigma',\Sigma_i)$, such that  $\phi^{\delta}(\widetilde{\Sigma}_{x^r})\cap\Sigma'=\emptyset$, \, $r\in\{1,\dots,m\}$, where $\widetilde{\Sigma}_{x^r}$ as in the $Lemma \, \ref{L9GCS}$.
\end{R}
%RRRRRRRRRRRRRRRRRRRRRRRRRRRRRRRRRRRRRRRRRRRRRRRRRRRRRR
To give a complete proof of Lemma \ref{L7}, we must now treat the more general case of Lemma \ref{L9GCS}, where three or more sections intersect as in case 2i)\\
\ \\
\noindent To reinforce the idea,  we recall the equation  (\ref{E6GCS})
\begin{eqnarray*}
\ds \Lambda\subset  \bigcup^{l}_{i=1}\phi^{(-\gamma,\gamma)}(\textrm{int}\,{\Sigma_{i}})=\bigcup^{l}_{i=1}U_{\Sigma_{i}}.
\end{eqnarray*}

%\noindent In Lemma \ref{L7GCS} was proved that the GCS in (\ref{E6GCS}) can be taken disjoint if all possible intersections of $\Sigma_{i}$ with $\Sigma_{j}$ are in the hypothesis of the Corollary \ref{C2GCS}.\\

\noindent Now we will prove that the GCS in (\ref{E6GCS}) can be taken disjoint, even if some of the cross-sections  are in  condition 2(i).
%\textcolor{red}{PAREI AQUI 18/02/2020 - 02:31}
%LLLLLLLLLLLLLLLLLLLLLLLLLLLLLLLLLLLLLLLLLLLLLLLLLLLLLLLLLLLLLLLLLLLLLLLLLLLLLLLLLLLLLLLLLLLLLLLLLLLLLLLLLLLLLLLLLL
\begin{Le}\label{L10GCS}
Let $\Sigma_{i}$ be a GCS as in $(\ref{E6GCS})$. Let $B_{i}=\{j: \Sigma_{i} \ \ \text{intersects} \ \  \Sigma_{j} \ \ \text{as the case} \, \, \, \emph{2(i)}\}$. Then, $\Sigma_{i}$ can be subdivided in a finite number of GCS $\{\Sigma_{i}^{s}:s=1,\dots,n\}$ such that for each $s$, there is $0<\delta_{s}<\frac{\gamma}{2}$ such that 
\begin{enumerate}
	\item[$1)$] $\phi^{\delta_{s}}(\Sigma_{i}^{s})\cap\Sigma_{j}=\emptyset$, $j\in B_{i}$ and $\ds \phi^{\delta_{s}}(\Sigma_{i}^{s})\cap \phi^{\delta_{s'}}(\Sigma_{i}^{s'})=\emptyset$,  $s\neq s'$.
	\item[$2)$] $\ds \Lambda \cap \bigcup_{j\in B_{i}\cup \{i\}}\phi^{(-\frac{\gamma}{2},\frac{\gamma}{2})}(\emph{int}{\Sigma_{j}})\subset \Lambda \cap \left(\bigcup_{j\in B_{i}}\phi^{(-\frac{\gamma}{2},\frac{\gamma}{2})}(\emph{int}\Sigma_{j})\cup\bigcup_{s=1}^{n} \phi^{(-\gamma,\gamma)}\left(\emph{int}(\phi^{\delta_s}(\Sigma_{i}^{s}))\right)\right)$.
\end{enumerate}
\end{Le}
\begin{proof}
The proof is by induction on $\#B_{i}$. The case $\#B_{i}=1$ is true by the Lemma \ref{L8GCS}. Suppose the statement is true for $\#B_{i}<q$ and we prove for $\#B_{i}=q$. In fact:\\
Let $k\in B_{i}$, then by Lemma \ref{L8GCS}, given $0<\delta<\frac{\gamma}{2}$, there are a finite number of GCS $\left\{\widetilde{\Sigma}^{r}_{k}\subset \Sigma_{k}: r\in\{1,\dots,r_{k}\}\right\}$ such that 

 \begin{eqnarray}\label{E8GCS}
	\ds \phi^{\delta}(\widetilde{\Sigma}^{r}_{k})\cap \Sigma_{k}=\emptyset, \ \text{also} \ \ \ds \phi^{\delta}(\widetilde{\Sigma}^{r}_{k})\cap \Sigma_{i}=\emptyset \ \text{for any} \ r, \ \text{and} \  \Sigma_{i}\cap\Sigma=\emptyset, \  \ \Sigma\in\Sigma_{k}^{\#}.
	\end{eqnarray}
 \begin{eqnarray}\label{E9GCS} \Lambda\cap \phi^{(-\frac{\gamma}{2},\frac{\gamma}{2})}(\text{int}\,\Sigma_k)\subset \Lambda \cap \left(\bigcup^{r_k}_{r=1}\phi^{(-\gamma,\gamma)} \left(\phi^{\delta}(\text{int}\,\widetilde{\Sigma}_{k}^r)\right)\cup \bigcup_{\Sigma\in \Sigma_{k}^{\#}}\phi^{(-\gamma,\gamma)}(\text{int}\,\Sigma)\right),
	\end{eqnarray}
where  $\Sigma_{k}^{\#}$ is as in the Lemma \ref{L8GCS}.\\
\noindent Consider now the collection  of  GCS 
$$\left\lbrace \Sigma_{i},\Sigma_{j}, \phi^{\delta}(\widetilde{\Sigma}_{k}^r), \Sigma_{k}^{\#}:j\in B_{i}\setminus \{k\} \ \text{and} \ \ r\in\{1,\dots,r_{k}\}\right\rbrace.$$

%$$\Sigma_{i}\cup \left\{\Sigma_{j}:j\in B_{i}\setminus \{k\}\right\}\cup \left\{\phi^{\delta}(\widetilde{\Sigma}_{k}^r):r\in\{1,\dots,r_{k}\}\right\}\cup \Sigma_{k}^{\#}.$$ 
\noindent For this new collection of GCS, we have $\#B_{i}<q$. Therefore, by the induction hypothesis, the lemma is true for $\left\{\Sigma_j:j\in B_{i}\cup \{i\}\setminus\{k\}\right\}$ and by (\ref{E8GCS}) and (\ref{E9GCS}) we have the lemma.
\end{proof}
\begin{R}\label{R7GCS}
Let $\Sigma'$ be a GCS as in $(\ref{E6GCS})$ such that $\Sigma'\cap \Sigma_{i}=\emptyset$. Then by Remark \ref{R6GCS}, we can take $\delta_{s}$ less than $d(\Sigma_{i},\Sigma')$. So $\phi^{\delta_{s}}(\Sigma_{i}^{s})\cap \Sigma'=\emptyset$ for any $s\in\{1,\dots,m\}$, $\Sigma_{i}^{s}$ as in the Lemma $\ref{L10GCS}$.
\end{R}
We finish this section making the proof of the Lemma \ref{L7}.
%%%%%%%%%%%%%%%%%%%%%%%%%%%%%%%%%%%%%%%%%%%%%%%%
%%%%%%%%%%%%%%%%%%%%%%%%%%%%%%%%%%%%%%%%%%%%%%%%
%ProofProofProofProofProofProofProofProof
\begin{proof}[\textbf{\emph{Proof of Lemma \ref{L7}}}]
 If all the possible intersections satisfy condition 1, the result follows from Lemma \ref{L7GCS}. Then, we can suppose that there is $i$, such that the set  $B_{i}=\{j: \Sigma_{i} \ \ \text{intersects} \ \  \Sigma_{j} \ \ \text{as the case}\, \,  \text{2(i)}\}$ is non-empty.
Without loss of generality, assume that $B_{1}\neq\emptyset$. Let us will conclude the proof by induction on  $l$ at (\ref{E6GCS}).\\
\indent Note that the Lemma \ref{L8GCS} implies the result in the case $l=2$.  Therefore, suppose it is true for $k<l$ and we will prove for $k=l$. Indeed, fix $\Sigma_{1}$ and consider the set  $$T_{1}=\{j: \Sigma_{j} \ \ \text{intersects} \ \  \Sigma_{1} \ \ \text{as the case} \, \, \, \text{1}\}.$$ Then by Lemma \ref{L6GCS}, there is $0<\delta<\frac{\gamma}{2}$ small enough, such that $\phi^{\delta}(\Sigma_{1})\cap \Sigma_{j}=\emptyset$ for any $j\in T_{1}$.\\
\indent  Abusing the notation, let's still call  $B_1=\{j:\phi^{\delta}(\Sigma_{1}) \ \ \text{intersects} \ \  \Sigma_{j} \ \ \text{as the case} \, \, \text{2(i)}\}$. Then by Lemma \ref{L9GCS}, $\phi^{\delta}(\Sigma_{1})$ can be subdivided in a finite number of GCS $\{\Sigma_{1}^{s}:s=1,\dots,m_0\}$ and for each $s$ there is $0<\delta_{s}<\frac{\gamma}{2}$ such that holds $1)$ and $2)$ of Lemma \ref{L9GCS}. Also by Remark \ref{R7GCS}, we can assume that $\phi^{\delta_s}(\Sigma_{1}^{s})\cap \Sigma_{j}=\emptyset$ for any $s\in\{1,\dots,m_0\}$ and for any $j\in T_{1}\setminus \{1\}$.\\
\indent Since the cardinal $\#\left(T_{1}\setminus\{1\}\cup B_{1}\right)<l$, then the set $\ds\left\{\Sigma_{j}:j\in T_{1}\setminus\{1\}\right\}\cup \left\{\Sigma_{k}:k\in B_{1}\right\}$ satisfies the induction hypothesis, therefore  there are $n(l)-1$ GCS, $\widetilde{\Sigma}_{i}$ with $\widetilde{\Sigma}_{i}\cap\widetilde{\Sigma}_{j}=\emptyset$ for $i\neq j$, such that 
\begin{eqnarray}\label{E10GCS}
\Lambda\cap \bigcup_{i\in T_1\cup B_1\setminus\{1\}}\left(\phi^{-(\gamma,\gamma)}(\textrm{int}\, \Sigma_i)\right)\subset \Lambda\cap \bigcup^{n(l)}_{i=2}\phi^{(-2\gamma,2\gamma)}(\textrm{int}\,\widetilde{\Sigma}_i).
\end{eqnarray}

\noindent Since $\phi^{\delta_s}(\Sigma_{1}^{s})\cap \Sigma_{j}=\emptyset$ for any $j\in T_1\cup B_1\setminus \{1\}$ and any $s\in\{1,\dots,m\}$, then the $\widetilde{\Sigma}_j$ may be taken such that $\phi^{\delta_s}(\Sigma_{1}^{s})\cap\widetilde{\Sigma}_i=\emptyset$ for any $s\in\{1,\dots,m\}$ and any $i\in \{2,\dots,n(l)\}$.\\
So, by the condition $2)$ of Lemma \ref{L9GCS} and (\ref{E10GCS}) we have that
\begin{eqnarray*}
 \ds\Lambda&=&\Lambda\cap \bigcup^{l}_{j=1}\phi^{(-\gamma,\gamma)}(\textrm{int}\,\Sigma_j)\subset \Lambda \cap\left( \bigcup^{l}_{j=2}\phi^{(-\gamma,\gamma}(\textrm{int}\,\Sigma_{j})\cup \phi^{(-\gamma,\gamma)}\left(\textrm{int}\,\phi^{\delta}(\Sigma_{1})\right)\right)\\
&=& \ds\Lambda \cap \left(\bigcup_{j\in B_{1}}\phi^{(-\gamma,\gamma)}(\textrm{int}\,\Sigma_{j})\cup\bigcup_{j\in T_{1}\setminus\{1\}}\phi^{(-\gamma,\gamma)}(\textrm{int}\,\Sigma_{j})\cup\phi^{(-\gamma,\gamma)}\left(\textrm{int}\,\phi^{\delta}(\Sigma_{1})\right)\right)\\
&\subset & \Lambda\cap \left(\bigcup^{n(l)}_{i=2}\phi^{(-2\gamma,2\gamma)}(\textrm{int}\,\widetilde{\Sigma}_i)\cup \bigcup_{s=1}^{m_0} \phi^{(-2\gamma,2\gamma)}\left(\textrm{int}\,\phi^{\delta_s}(\Sigma_{i}^{s})\right)\right).
\end{eqnarray*}
Therefore, the last inclusion  concludes our proof, since  $m=n(l)-1+m_0$. 

\end{proof}

%%%%%%%%%%%%%%%%%%%%%%%%%%%%%%%%%%%%%%%%%%%%%%%%%%%%%%%%%%%%%%%%%%%%%%%%%
%%%%%%%%%%%%%%%%%%%%%%%%%%%%%%%%%%%%%%%%%%%%%%%%%%%%%%%%%%%%%%%%%%%%%%%%
\subsection{Proof of Hyperbolicity of Poincar\'e Map}\label{PHPM}
%%%%%%%%%%%%%%%%%%%%%%%%%%%%%%%%%%%%%%%%%%%%%%%%%%%%%%%%%%%%%%%%%%%%%%
%%%%%%%%%%%%%%%%%%%%%%%%%%%%%%%%%%%%%%%%%%%%%%%%%%%%%%%%%%%%%%%%%%%%
Our main goal of this section is to prove Lemma \ref{LHPM}. We recall some information. 
Let $\Xi=\bigcup_{i=1}^{m} \Sigma_{i}$ be a finite union of cross-sections to the flow $\phi^{t}$ given by  Remark \ref{R11}, which are pairwise disjoint. Sometimes, abusing of notation,  we consider $\Xi=\{\Sigma_1,\cdots, \Sigma_l\}$. Let ${\cR}\colon \Xi \to \Xi$ be a Poincar\'e map, that is, the map of first return to $\Xi$, ${\cR}(y)=\phi^{t_{+}(y)}(y)$, where $t_{+}(y)$ corresponds to the first time that the positive orbits of $y\in \Xi$ encounter $\Xi$. \\
The splitting $E^{s}\oplus \phi\oplus E^{u}$ over a neighborhood $U_{0}$ of $\Lambda$ defines a continuous splitting $E^{s}_{\Sigma}\oplus E^{u}_{\Sigma}$ of the tangent bundle $T\Sigma$ with $\Sigma\in \Xi$ given by 
\begin{eqnarray}\label{eq7}
E^{s}_{\Sigma}(y)=E^{cs}_{y}\cap T_{y}\Sigma \ \text{and} \ E^{u}_{\Sigma}(y)=E^{cu}_{y}\cap T_{y}\Sigma,
\end{eqnarray}
where $E_{y}^{cs}=E^{s}_y\oplus \left\langle \phi(y)\right\rangle$ and $E_{y}^{cu}=E^{u}_y\oplus \left\langle {\phi}(y)\right\rangle$.\\

We will show that for a sufficiently large iterated of ${\cR}$, ${\cR}^{n}$, the splitting (\ref{eq7}) defines a hyperbolic splitting for transformation ${\cR}^{n}$ on the cross-sections, at least restricted to $\Lambda\cap \Xi$ (cf. \cite[chap. 6]{VP}). To achieve this goal, we will take into consideration the following:
%RRRRRRRRRRRRRRRRRRRRRRRRRRRRRRRRRRRRRRRRRRRRRRRRRRRRRRRRRRRRRRRRRRRRRRRRRRRRRRRRRRRRRRRRRRRRRRRRRRRRRRRRRRRRRRRRRRRRRRRRRRRRRRRRRRRRRRRRRRRRRRRRRR
\begin{R}\label{R2GCS}
\item[$(1)$] In what follows, we use $K\geq1$ as a generic notation for large constants
depending only on a lower bound for the angles between the cross-sections and the flow
direction. Also depending on upper and lower bounds for the norm of the vector field on the cross-sections.
\item[$(2)$] Let us consider unit vectors, $e^{s}_{x}\in E^{s}_{x}$ and $\hat{e}^{s}_{x}\in E^{s}_{\Sigma}(x)$, and write
\begin{eqnarray}\label{E2GCS}
e^{s}_{x}=a_{x}\hat{e}^{s}_{x}+b_{x}\frac{\phi(x)}{\left\|\phi(x)\right\|}.
\end{eqnarray}

\noindent Since the angle between $E^{s}_{x}$ and $\phi(x)$,  $\angle (E^{s}_{x},\phi(x))$, is greater than or equal to the angle between $E^{s}_{x}$ and $E^{cu}_{x}$,  $\angle(E^{s}_{x},E^{cu}_{x})$, due to the fact $\phi(x)\in E^{cu}_{x}$. The latter is uniformly bounded from zero, we have $\left|a_{x}\right|\geq \kappa$ for some $\kappa>0$ which depends only on the flow. %It is clear from $(\ref{E2GCS})$ and the fact that the above angle is uniformly bounded from zero.
\end{R}
%RRRRRRRRRRRRRRRRRRRRRRRRRRRRRRRRRRRRRRRRRRRRRRRRRRRRRRRRRRRRRRRRRRRRRRRRRRRRRRRRRRRRRRRRRRRRRRRRRRRRRRRRRRRRRRRRRRRRRRRRRRRRRRRRRRRRRRRRRRRRRRRRR
\noindent Let $0<\lambda<1$ be, then there is $t_1>0$ such that $\ds {\lambda^{t_{1}}<\frac{\kappa}{K}\lambda \ \ \text{and} \ \ \lambda^{t_{1}}<\frac{\lambda}{K^{3}}}$, take $n$, such that $t_{n}(x):=\sum^{n}_{i=1}{t_{i}(x)}>t_{1}$ for all $x \in  \Lambda\cap\Xi$, where $t_{i}(x)=t_{+}({\cR}^{i-1}(x))$.\\
\ \\
\noindent So, we have the following proposition:
\begin{Pro}\label{P1GCS}
Let ${\cR}\colon \Xi \to \Xi$ be a Poincar\'e map and $n$ as before. Then $D{\cR}^{n}_{x}(E^{s}_{\Sigma}(x))=E^{s}_{\Sigma^{\prime}}({\cR}^{n}(x))$ at every $x \in \Sigma\in \{\Sigma_i\}_{i}$ and $D{\cR}^{n}_{x}(E^{u}_{\Sigma}(x))=E^{u}_{\Sigma^{\prime}}({\cR}^{n}(x))$ at every $x\in\Lambda\cap\Sigma$ where ${\cR}^{n}(x)\in \Sigma^{\prime}\in \{\Sigma_{i}\}_i$.\\
Moreover, we have that  
\begin{center}
$\left\|D{\cR}^{n}|_{E^{s}_{\Sigma}(x)}\right\| < \lambda$ and $\left\|D{\cR}^{n}|_{E^{u}_{\Sigma}(x)}\right\|>\frac{1}{\lambda}$
\end{center}
at every $x\in\Sigma\in \Xi$.
\end{Pro}
\begin{proof}
The differential of the map ${\cR}^{n}$ at any point $x\in\Sigma$ is given by 
$$D{\cR}^{n}(x)=P_{{\cR}^{n}(x)}\circ D\phi^{t_{n}(x)}|_{T_{x}\Sigma},$$
where $P_{{\cR}^{n}(x)}$ is the projection onto $T_{{\cR}^{n}(x)}\Sigma'$ along the direction of $\phi({\cR}^{n}(x))$. 
\ \\
\ \\
Note that $E^{s}_{\Sigma}$ is tangent to $\Sigma\cap W^{cs}$. Since the center stable manifold $W^{cs}(x)$ is invariant, we have that the stable bundle is invariant:
\begin{center}
$D{\cR}^{n}(x)(E^{s}_{\Sigma}(x))=E^{s}_{\Sigma^{\prime}}({\cR}^{n}(x))$.
\end{center}
Moreover, for all $x\in \Sigma$ %(for all $x\in SM$), 
we have 
\begin{center}
$D\phi^{t_{n}(x)}(E^{u}_{\Sigma}(x))\subset D\phi^{t_{n}(x)}(E^{cu}_{x})=E^{cu}_{{\cR}^{n}(x)}$,
\end{center}
since $P_{{\cR}^{n}(x)}$ is the projection along the vector field, it sends $E^{cu}_{{\cR}^{n}(x)}$ to $E^{u}_{\Sigma^{\prime}}({\cR}^{n}(x))$.\\
This proves that the unstable bundle is invariant restricted to $\Lambda$, that is, $D{\cR}^{n}(x)(E^{u}_{\Sigma}(x))=E^{u}_{\Sigma^{\prime}}({\cR}^{n}(x))$, because they have the same dimension 1.

\indent Next, we prove the expansion and contraction statements. We start by noting that $\left\|P_{{\cR}^{n}(x)}\right\|\leq K$, with $K\geq 1$. Then we consider the basis $\left\{\frac{\phi(x)}{\left\|\phi(x)\right\|},e^{u}_{x}\right\}$ of $E^{cu}_{x}$, where $e^{u}_{x}$ is a unit vector in the direction of $E^{u}_{\Sigma}(x)$ and $\phi(x)$ is the direction of flow. Since the  direction of the flow is invariant by $D\phi^t$, then the matrix of $D\phi^{t}|_{E^{cu}_{x}}$ relative to this basis is upper triangular:
\begin{center}
$D\phi^{t_{n}(x)}|_{E^{cu}_{x}}=\left[\begin{array}{cc} \frac{\left\|\phi({\cR}^{n}(x))\right\|}{\left\|\phi(x)\right\|} & * \\ &  \\ 0 & a \end{array}\right]\quad$
\end{center}
since $D\phi^{t_{n}(x)}(\phi(x))=\phi(\phi^{t_{n}(x)}(x))=\phi({\cR}^{n}(x))$.\\
Then, 
\begin{eqnarray*}
 \left\|D{\cR}^{n}(x)e^{u}_{x}\right\|&=&\left\|P_{{\cR}^{n}(x)}(D\phi^{t_{n}(x)}(x))e^{u}_{x}\right\| =  \left\|ae^{u}_{{\cR}^{n}(x)}\right\| = \left|a\right| \\ &\geq&  \frac{1}{K}\frac{\left\|\phi(x)\right\|}{\left\|\phi({\cR}^{n}(x))\right\|}\left|det(D\phi^{t_{n}(x)}|_{E^{cu}_{x}}\right| \geq  \frac{1}{K^{3}}\lambda^{-t_{n}(x)}\geq K^{-3}\lambda^{-t_{1}}>\frac{1}{\lambda}.
\end{eqnarray*}

\noindent To prove that $\left\|D{\cR}^{n}|_{E^{s}_{\Sigma}(x)}\right\|<\lambda$, let us consider unit vectors, $e^{s}_{x}\in E^{s}_{x}$ and $\hat{e}^{s}_{x}\in E^{s}_{\Sigma}(x)$, and write as in (\ref{E2GCS})
\begin{eqnarray*}
e^{s}_{x}=a_{x}\hat{e}^{s}_{x}+b_{x}\frac{\phi(x)}{\left\|\phi(x)\right\|},
\end{eqnarray*}
with  $\left|a_{x}\right|\geq \kappa$ for some $\kappa>0$ which depends only on the flow. \\
\ \\
Then, since $\ds P_{{\cR}^{n}(x)}\left(\frac{\phi({\cR}^{n}(x))}{\left\|\phi(x)\right\|}\right)=0$ we have that
\begin{eqnarray}\label{E3GCS}
\ds\left\|D{\cR}^{n}(x)\hat{e}^{s}_{x}\right\|&=&\left\|P_{{\cR}^{n}(x)}(D\phi^{t_{n}(x)}(x))\hat{e}^{s}_{x}\right\| \nonumber \\
&=&\left\|P_{{\cR}^{n}(x)}(D\phi^{t_{n}(x)}(x))\left[\frac{1}{a_{x}}\left[{e^{ss}_{x}-b_{x}\frac{\phi(x)}{\left\|\phi(x)\right\|}}\right]\right]\right\| \nonumber \\
&=& \frac{1}{\left|a_{x}\right|}\left\|P_{{\cR}^{n}(x)}(D\phi^{t_{n}(x)}(x))\left[e^{s}_{x}-b_{x}\frac{\phi(x)}{\left\|\phi(x)\right\|}\right]\right\|
\nonumber \\
&=&\frac{1}{\left|a_{x}\right|}\left\|P_{{\cR}^{n}(x)}(D\phi^{t_{n}(x)}(x))(e^{s}_{x})-b_{x}P_{{\cR}^{n}(x)}\left(\frac{\phi(R^{n}(x))}{\left\|\phi(x)\right\|}\right)\right\| \nonumber \\
&\leq & \frac{K}{\kappa}\left\|D\phi^{t_{n}(x)}(x)(e^{ss}_{x})\right\|\leq  \frac{K}{\kappa}\lambda^{t_{n}(x)}\leq\frac{K}{\kappa}\lambda^{t_1}<\lambda \ .   
\end{eqnarray}
\end{proof}
%%%%%%%%%%%%%%%%%%%%%%%%%%%%%%%%%%%%%%%%%%%%%%%%%%%%%%%
%%%%%%%%%%%%%%%%%%%%%%%%%%%%%%%%%%%%%%%%%%%%%%%%%%%%%%%%%%%%%%%%%%%%%%%%%%%%%%%%%%%%%%%%%%%%%%%%%%%%%%%%%%%%%

The next step is to prove that there exists $n$ such that $\mathcal{R}^n$ is defined for every point of $\Lambda\cap \Xi$ and consequently, by Proposition \ref{P1GCS}, it is a hyperbolic set for $\mathcal{R}^n$. Moreover, it should be a hyperbolic set for $\mathcal{R}$, since $\Lambda\cap \Xi$ is invariant by $\mathcal{R}$. 

%The next step is to prove that there exists $n$ such that $\mathcal{R}^n$ is defined for every point of $\Lambda\cap \Xi$, and therefore hyperbolic for $\mathcal{R}^n$ by Proposition \ref{P1GCS} and consequently, it should be a hyperbolic set for $\mathcal{R}$, since $\Lambda\cap \Xi$ is invariant by $\mathcal{R}$. \\
\ \\
For every $x\in \Sigma\ \in \Xi$, we define $W^s(x,\Sigma )$ to be the connected component of $W^{cs}(x)\cap \Sigma$ that contains $x$.
Given $\Sigma\ ,\Sigma^{\prime}\in\Xi$ we set $\Sigma(\Sigma^{\prime})_{n}=\left\{x\in\Sigma:{\cR}^{n}(x)\in \Sigma^{\prime}\right\}$ the domain of the map ${\cR}^{n}$ from $\Sigma$ to $\Sigma^{\prime}$. Remembering relation (\ref{E3GCS}), the tangent direction to each $W^{s}(x,\Sigma)$ is contracted at an exponential rate $\left\|D{\cR}^{n}(x)\hat{e}^{s}_{x}\right\|\leq Ce^{-\beta t_{n}(x)}$, with $C=\frac{K}{\kappa}$ and $\beta=-\log\lambda>0$. Since the cross-section of $\Xi$ are GCS and satisfies (\ref{delta-GCS}) for some  $\delta>0$, then we can take $n$ such that $t_{n}(x)>t_{1}$ as in Proposition \ref{P1GCS} with $t_{1}$ satisfying

\begin{equation}\label{EIP}
Ce^{-\beta t_{1}} \sup\left\{l(W^{s}(x,\Sigma)):x\in \Sigma\right\}<\delta \ \text{and}\ \ Ce^{-\beta t_{1}}<\frac{1}{2},
\end{equation}
where $l(W^{s}(x,\Sigma))$ is the length of $W^{s}(x,\Sigma)$.
Under these conditions we have:
%LLLLLLLLLLLLLLLLLLLLLLLLLLLLLLLLLLLLLLLLLLLLLLLLLLLLLLLLLLLLLLLLLLLLLLLLLLLLLLLLLLLLLLLLLLLLLLLLLLLLLLLLLLLLLLLLLL
\begin{Le}\label{L11GCS}
Let $n$ be satisfying conditions from Proposition \emph{\ref{P1GCS}}. If ${\cR}^{n}\colon \Sigma(\Sigma^{\prime})_{n} \to \Sigma^{\prime}$ defined by ${\cR}^{n}(x)=\phi^{t_{n}(x)}(x)$. Then, 
\begin{enumerate}
	\item[$(1)$] ${\cR}^{n}(W^{s}(x,\Sigma))\subset W^{s}({\cR}^{n}(x),\Sigma^{\prime})$ for every $x\in \Sigma(\Sigma^{\prime})_{n}$, 
	\item[$(2)$]$d({\cR}^{n}(y),{\cR}^{n}(z))\leq \frac{1}{2}d(y,z)$ for every $y,z\in W^{s}(x,\Sigma)$ and $x\in \Sigma(\Sigma^{\prime})_{n}$.
\end{enumerate}
\end{Le}
%LLLLLLLLLLLLLLLLLLLLLLLLLLLLLLLLLLLLLLLLLLLLLLLLLLLLLLLLLLLLLLLLLLLLLLLLLLLLLLLLLLLLLLLLLLLLLLLLLLLLLLLLLLLLLLLLLLL
\ \\
\indent We let $\left\{U_{\Sigma_{i}}:i=1,\dots,m\right\}$ be a finite cover of $\Lambda$, as in the Lemma \ref{L7} where the $\Sigma_{i}$ is a GCS for each $i$, and we set $T_{3}$ to be an upper bound for the time it takes any point $z\in U_{\Sigma_{i}}$ to leave this tubular neighborhood under the flow, for any $i=1,\dots,l$. We assume, without loss of generality, that $t_{1}$ in Proposition \ref{P1GCS} and (\ref{EIP}) is bigger than $T_{3}$ and  we consider $n$ of Lemma \ref{L11GCS}. If the point $z$ never returns to one of the cross-sections, then the map ${\cR}$ is not defined at $z$. Moreover, by the Lemma \ref{L11GCS}, if ${\cR}^{n}$ is defined for $x\in \Sigma$ for some $\Sigma \in \Xi$, then ${\cR}^{n}$ is defined for every point in $W^{s}(x,\Sigma)$. Hence, the domain of ${\cR}^{n}|\Sigma$ consists of strips of $\Sigma$. The smoothness of $(t,x)\longrightarrow \phi^{t}(x)$ ensure that the strips 
\begin{center}
$\Sigma(\Sigma^{\prime})_{n}=\left\{x\in\Sigma:{\cR}^{n}(x)\in \Sigma^{\prime}\right\}$
\end{center}
have non-empty interior in $\Sigma$ for every $\Sigma , \Sigma^{\prime} \in \Xi$. Note that by the Tubular Flow Theorem and the smoothness of the flow, the map ${\cR}$ is locally smooth for all points $x\in \text{int}\, \Sigma$ such that ${\cR}(x)\in \text{int}\, \Xi$, where $\ds \text{int}\,\Xi=\{\text{int}\,\Sigma_{i}\}^{m}_{i=1}$.
We will denote $\ds \partial^{j}\Xi= \{\partial^{j}\Sigma_{i}\}^{l}_{i=1}$ for $j=s,u$.
%LLLLLLLLLLLLLLLLLLLLLLLLLLLLLLLLLLLLLLLLLLLLLLLLLLLLLLLLLLLLLLLLLLLLLLLLLLLLLLLLLLLLLLLLLLLLLLLL
\begin{Le}\label{L12GCS}
The set of discontinuities of ${\cR}$ in $\Xi \setminus (\partial^{s}\Xi\cup \partial^{u}\Xi)$ is contained in the set of point $x\in \Xi \setminus(\partial^{s}\Xi\cup \partial^{u}\Xi)$ such that, ${\cR}(x)$ is defined and belongs to $(\partial^{s}\Xi\cup \partial^{u}\Xi)$.
\end{Le}
%LLLLLLLLLLLLLLLLLLLLLLLLLLLLLLLLLLLLLLLLLLLLLLLLLLLLLLLLLLLLLLLLLLLLLLLLLLLLLLLLLLLLLLLLLLLLLLLL
\begin{proof}
Let $x$ be a point in $\Sigma \setminus (\partial^{s}\Sigma\cup \partial^{u}\Sigma)$ for some $\Sigma\in \Xi$, not satisfying the condition. Then ${\cR}(x)$ is defined and ${\cR}(x)$ belongs to the interior of some cross-section $\Sigma^{\prime}$. By the smoothness of the flow, we have that ${\cR}$ is smooth in a neighborhood of $x$ in $\Sigma$. Hence, any discontinuity point for ${\cR}$ must be in the condition of the Lemma.
\end{proof}
\noindent Let $D_{j}\subset \Sigma_{j}$ be the set of points sent by ${\cR}^{n}$ into stable boundary points of some Good Cross-Section of $\Xi$, if we define the set  
$$L_{j}=\left\{W^{s}(x,\Sigma_{j}):x\in D_{j}\right\},$$
then the  Lemma \ref{L11GCS} implies that $L_{j}=D_{j}$. Let $B_{j}\subset \Sigma_{j}$ be the set of points sent by ${\cR}^{n}$ into unstable boundary points of some Good Cross-Section of $\Xi$. Denote $$\Gamma_{j}=\bigcup_{x\in D_{j}}W^{s}(x,\Sigma_{j})\cup B_{j} \ \ \text{and}\ \  \Gamma=\bigcup \Gamma_{j}\cup (\partial^{s}\Xi\cup \partial^{u}\Xi).$$\\
Then, ${\cR}^{n}$ is smooth in the complement $\Xi\setminus \Gamma$ of $\Gamma$. Observe that if $x\in D_{j}$ for some $j\in\left\{1,\dots,l\right\}$, then 
$${\cR}^{n}(W^{s}(x,\Sigma_{j}))\subset \partial^{s}\Sigma^{\prime} \ \ \text{for\ some} \ \ \Sigma^{\prime}\in \Xi.$$
We know that $\partial^{s}\Xi \cap \Lambda=\emptyset$, then ${\cR}^{n}(W^{s}(x,\Sigma_{j}))\cap \Lambda=\emptyset$ for all $x \in D_j$, which implies that $W^{s}(x,\Sigma_{j})\cap \Lambda=\emptyset$ for all $x\in D_{j}$. However, if $x\in B_{j}$, then ${\cR}^{n}(x)\in \partial^{u}\Sigma^{\prime}$ for some $\Sigma^{\prime}\in \Xi$ and again we know that $\partial^{u}\Xi \cap \Lambda=\emptyset$, this implies that $B_{j}\cap \Lambda=\emptyset$. Therefore, $\Gamma_{j}\cap \Lambda=\emptyset$ for all $j\in\left\{1,\dots,l\right\}$, so $\Gamma\cap \Lambda=\emptyset$. The latter arguments proved the following: 
\begin{Le}\label{L14}
If $x\in\Lambda\cap\Xi$, then ${\cR}^{n}(x)$ is defined and ${\cR}^{n}(x)\in \emph{int}\,\Xi$.\\
\end{Le}
%\noindent If we denote by $\Delta:=\bigcap_{n\in \mathbb{Z}}{\cR}^{-n}(\Xi))$, then 
\begin{proof}[\emph{\textbf{Proof of Lemma \ref{LHPM}}}]
Note simply that by Lemma \ref{L14} the set $\Lambda\cap \Xi$ is an invariant set for ${\cR}^{n}$ and by Proposition \ref{P1GCS}, $\Lambda\cap \Xi$ is hyperbolic set for ${\cR}^{n}$ and since $\Lambda\cap \Xi$ is invariant for ${\cR}$, then $\Lambda\cap \Xi$ is hyperbolic for ${\cR}$, and 
$$\Lambda\cap \Xi\subset  \bigcap_{n\in \mathbb{Z}}{\cR}^{-n}(\Xi)=\Delta.$$
\end{proof}

%%%%%%%%%%%%%%%%%%%%%%%%%%%%%%%%%%%%%%%%%%%%%%%%%%%%%%%%%%%%%%%%%%%%%%%%%%
%%%%%%%%%%%%%%%%%%%%%%%%%%%%%%%%%%%%%%%%%%%%%%%%%%%%%%%%%%%%%
\subsection{Regular Cantor Sets}\label{RCS}
%%%%%%%%%%%%%%%%%%%%%%%%%%%%%%%%%%%%%%%%%%%%%%%%%%%%%%%%%%%%%%%%
Let $\mathbb{A}$ be a finite alphabet, $\mathbb{B}$ a subset of $\mathbb{A}^{2}$, and $\Sigma_{\mathbb{B}}$ the subshift of finite type of $\mathbb{A}^{\mathbb{Z}}$ with allowed transitions $\mathbb{B}$. We will always assume that $\Sigma_{\mathbb{B}}$ is topologically mixing and that every letter in $\A$ occurs in $\Sigma_{\mathbb{B}}$.

\indent An {\it expansive map of type\/} $\Sigma_{\mathbb{B}}$ is a map $g$ with the following properties:
\begin{itemize}
\item[(i)] the domain of $g$ is a disjoint union
$\ds\bigcup_{\mathbb{B}}I(a,b)$. Where for each $(a,b)$,\,\, $I(a,b)$ is a compact subinterval of $I(a) := [0,1]\times\{a\}$;
\item[(ii)] for each $(a,b) \in \mathbb{B}$, the restriction of $g$ to $I(a,b)$ is a smooth diffeomorphism onto $I(b)$
satisfying $|Dg(t)| > 1$ for all $t$.
\end{itemize} 
\noindent The {\it regular Cantor set\/} associated to $g$ is the maximal invariant set
$$K = \bigcap_{n\ge0} g^{-n}\bigg(\bigcup_{\B} I(a,b)\bigg).$$
\noindent Let $\Sigma^+_{\mathbb{B}}$ be the unilateral subshift associated to $\Sigma_{\mathbb{B}}$. There exists a unique homeomorphism $h\colon \Sigma^{+}_{\B} \to K$ such that
$$
h(\underline{a}) \in I(a_0), \text{ for } \underline{a} = (a_0,a_1,\dots) \in \Sigma^+_{\mathbb{B}}
\ \ and \ \ 
h\circ\sigma =g \circ h,$$
where $\sigma^{+}\colon \Sigma_{\B}^{+} \to \Sigma_{\B}^{+}$, is defined as follows $\sigma^{+}((a_{n})_{n\geq 0})=(a_{n+1})_{n\geq0}$.

%%%%%%%%%%%%%%%%%%%%%%%%%%%%%%%%%%%%%%%%%%%%%%%%%%%%%%%%%%%%%%%%%%%%%%%%%%%%%%%%
%%%%%%%%%%%%%%%%%%%%%%%%%%%%%%%%%%%%%%%%%%%%%%%%%%%%%%%%%%%%%%%%%%%%%%%%%%%%%%%%%%%%%%%%%%%%%%%%%%%%%%%%%%%%%%%%%%%%%%%%%%%%%%%%%%%%%%%%%%%%%%%%%%%%%%%%%%%%%%%%%%%%%%
\subsection {Expanding Maps Associated to a Horseshoe}\label{sec EMAH}
%%%%%%%%%%%%%%%%%%%%%%%%%%%%%%%%%%%%%%%%%%%%%%%%%%%%%%%%%%%%%%%%%%%%%%%%%%%%%%%%
%%%%%%%%%%%%%%%%%%%%%%%%%%%%%%%%%%%%%%%%%%%%%%%%%%%%%%%%%%%%%%%%%%%%%%%%%%%%%%%%%
\noindent Let $\Lambda$ be a horseshoe associated to $C^{2}$-diffeomorphism $\varphi$ on a surface $M$ and consider a finite collection $(R_{a})_{a\in\mathbb{A}}$ of disjoint rectangles of $M$, which are a Markov partition of $\Lambda$. Define the sets 
 $$W^{s}(\Lambda,R)=\bigcap_{n\geq0}\varphi^{-n}(\bigcup_{a\in \mathbb{A}}R_{a}),$$
$$W^{u}(\Lambda,R)=\bigcap_{n\leq0}\varphi^{-n}(\bigcup_{a\in \mathbb{A}}R_{a}).$$
There is a $r>1$ and a collection of $C^{r}$-submersions $(\pi_{a}:R_{a}\rightarrow I(a))_{a\in\mathbb{A}}$, satisfying the following property:\\
\ \\
If $z,z^{\prime}\in R_{a_{0}}\cap \varphi^{-1}(R_{a_{1}})$ and $\pi_{a_{0}}(z)=\pi_{a_{0}}(z^{\prime})$, then we have $$\pi_{a_{1}}(\varphi(z))=\pi_{a_{1}}(\varphi(z^{\prime})).$$

\noindent In particular, the connected components of $W^{s}(\Lambda,R)\cap R_{a}$ are the level lines of $\pi_{a}$. Then we define a mapping $g^{u}$ of class $C^{r}$ (expansive of type $\Sigma_{\mathbb{B}}$) by the formula
$$g^{u}(\pi_{a_{0}}(z))=\pi_{a_{1}}(\varphi(z))$$
\noindent for $(a_{0},a_{1})\in \B$, $z\in R_{a_{0}}\cap\varphi^{-1}(R_{a_{1}})$.
The regular Cantor set $K^{u}$ defined by $g^{u}$, describes the geometry transverse of the stable foliation $W^{s}(\Lambda,R)$.
Analogously, we can describe the geometry  transverse of the unstable foliation $W^{u}(\Lambda,R)$ using a regular Cantor set $K^{s}$ define by a mapping $g^{s}$ of class $C^{r}$ (expansive of type $\Sigma_{\B}$).\\
%%%%%%%%%%%%%%%
%****references
%%%%%%%%%%%%%%%
\noindent Also, the horseshoe $\Lambda$ is locally the product of two regular Cantor sets $K^{s}$ and $K^{u}$. So, the Hausdorff dimension of $\Lambda$, $HD(\Lambda)$ is equal to
$HD(K^{s}\times K^{u})$, but for regular Cantor sets, we have that $HD(K^{s}\times K^{u})=HD(K^{s})+HD(K^{u})$. Thus $HD(\Lambda)=HD(K^{s})+HD(K^{u})$ (cf. \cite[chap 4]{PT}).

%%%%%%%%%%%%%%%%%%%%%%%%%%%%%%%%%%%%%%%%%%%%%%%%%%%%%%%%%%%%%%%%%%%%%%%%%%%%%%%%%%%%%%%%%
\subsection{Intersections of Regular Cantor Sets and Property $V$}\label{SIRCS}
%%%%%%%%%%%%%%%%%%%%%%%%%%%%%%%%%%%%%%%%%%%%%%%%%%%%%%%%%%%%%%%%%%%%%%%%%%%%%%%%%%%%%%%%%%%%%
\noindent Let $r$ be a real number $> 1$, or $r=+\infty$. The space of $C^r$ expansive maps of type $\Sigma$ (cf. Subsection \ref{RCS}), endowed with the $C^r$ topology, will be denoted by $\Omega_\Sigma^r$\,. The union $\Omega_\Sigma = \ds\bigcup_{r>1} \Omega_\Sigma^r$ is endowed with the inductive limit topology.\\

\noindent  Let $\Sigma^- = \{(\theta_n)_{n\leq 0}\,, (\theta_i,\theta_{i+1})
\in \B \text{ for } i < 0\}$. We equip $\Sigma^-$ with the following
ultrametric distance: for $\und{\theta} \ne \und{\widetilde\theta} \in \Sigma^-$, set

\[ d(\und{\theta},\und{\widetilde\theta}) = \left\{ \begin{array}{lll}
         \ \ \ 1 & \mbox{if \ $\theta_0 \ne \widetilde{\theta}_0$};\\
           & \\
       |I(\und{\theta} \wedge \und{\widetilde\theta})|& \mbox{otherwise}\end{array} \right., \] 

%%%%%%%%%%%%%%%
%****|I|?
%%%%%%%%%%%%%%%
\noindent where $\und{\theta} \wedge \und{\widetilde\theta} = (\theta_{-n},\dots,\theta_0)$ if
$\widetilde\theta_{-j} = \theta_{-j}$ for $0 \le j \le n$ and $\widetilde\theta_{-n-1}
\ne \theta_{-n-1}$\,.
\\
\indent Now, let $\und{\theta} \in \Sigma^-$; for $n > 0$, let $\und{\theta}^n = (\theta_{-n},\dots,\theta_0)$, and let $B(\und{\theta}^n)$ be the affine map from
$I(\und{\theta}^n)$ onto $I(\theta_0)$ such that the diffeomorphism $k_n^{\und{\theta}}
= B(\und{\theta}^n) \circ f_{\und{\theta}^n}$ is orientation preserving.\\ 
%%%%%%%%%%%%%%%
%****$f_{\und{\theta}^n}$
%%%%%%%%%%%%%%%
\noindent We have the following well-known result (cf. \cite{Su}):
\\ 

\noindent{\bf Proposition}. \textit{Let} $r \in (1,+\infty)$, $g \in \Omega_\Sigma^r$.
\begin{enumerate}

	\item \textit{For any $\und{\theta} \in \Sigma^-$, there is a diffeomorphism $k^{\und{\theta}} \in \text{Diff}_+^{\ r}(I(\theta_0))$ such that $k_n^{\und{\theta}}$ converge to $k^{\und{\theta}}$ in $\text{Diff}_{+}^{\ r'}(I(\theta_0))$, for any $r'< r$, uniformly in $\und{\theta}$. The convergence is also uniform in a neighborhood of $g$ in $\Omega_\Sigma^r$\,.}

\item  \textit{If $r$ is an integer or $r = +\infty$,\,\,\,$k_n^{\und{\theta}}$ converge to $k^{\und{\theta}}$ in $\text{Diff}_+^r(I(\theta_0))$. More precisely, for every $0 \leq j \leq r-1$, there is a constant $C_j$ (independent on $\und{\theta}$) such that
$$
\left| D^j \, \log \, D \left[k_n^{\und{\theta}} \circ (k^{\und{\theta}})^{-1}\right](x)\right| \leq C_j|I(\und{\theta}^n)|.
$$
It follows that $\und{\theta} \to k^{\und{\theta}}$ is Lipschitz in the following sense: for $\theta_0 = \widetilde\theta_0$, we have
$$
\left|D^j \, \log \, D\big[k^{\und{\widetilde\theta}} \circ (k^{\und{\theta}})^{-1}\big](x)\right| \leq C_j\,d(\und{\theta}, \und{\widetilde\theta}).
$$
}
\end{enumerate}

\ \\
\noindent Let $r \in (1,+\infty]$. For $a \in \A$, we denote by ${\cal{P}}^{r}(a)$ the space of $C^r$-embeddings of $I(a)$ into $\re$, endowed with the $C^r$ topology. The affine group $Aff(\re)$ acts by composition on the left on ${\cal{P}}^r(a)$, the quotient space being denoted by $\overline{\cal{P}}^r(a)$. We also consider ${\cal{P}}(a) = \ds\bigcup_{r>1} {\cal{P}}^r(a)$ and $\overline{\cal{P}}(a) = \ds\bigcup_{r>1} \overline{\cal{P}}^r(a)$, endowed with the inductive limit topologies.

\begin{R}\label{r=1}
In $\cite{MY}$ is considered ${\cal{P}}^{r}(a)$ for $r \in (1,+\infty]$, but all the definitions and results involving ${\cal{P}}^{r}(a)$ can be obtained considering $r\in [1,+\infty]$.
\end{R}

\noindent Let $\mathcal{A} =(\und{\theta}, A)$, where $\und{\theta} \in \Sigma^-$ and $A$ is now an {\it affine\/} embedding of $I(\theta_0)$ into $\re$. We have a canonical map
\begin{eqnarray*}
\cal{A} & \to & {\cal{P}}^r = \bigcup_{\A} {\cal{P}}^r(a)\\
(\und{\theta},A) &\mapsto & A\circ k^{\und{\theta}} \ \ (\in {\cal{P}}^r(\theta_0)).
\end{eqnarray*}

\noindent Now assume we are given two sets of data $(\A,\B,\Sigma,g)$, $({\A}',{\B}',\Sigma',g')$ defining regular Cantor sets $K$, $K'$.\\
We define as in the previous the spaces $\mathcal{P} = \ds\bigcup_{\mathbb{A}}{\mathcal{P}}(a)$ and ${\cal{P}}'= \ds\bigcup_{{\A}'} {\cal{P}}(a')$.

\noindent A pair $(h,h')$, 
$(h \in {\cal{P}}(a), h'\in {\cal{P}} '(a'))$ is called a {\it smooth configuration\/} for $K(a)=K\cap I(a)$, $K'(a')=K'\cap I(a')$. Actually, rather than working in the product $\cal{P} \times {\cal{P}}'$, it is better to go to the quotient $Q$ by the diagonal action of the affine group $Aff(\re)$. Elements of $Q$ are called {\it smooth relative configurations\/} for $K(a)$, $K'(a')$.

\noindent We say that a smooth configuration $(h,h') \in {\cal{P}}(a)\times {\cal{P}}(a')$ is
\begin{itemize}
\item {\it linked\/} if $h(I(a)) \cap h'(I(a')) \ne \emptyset$;
\item {\it intersecting\/} if $h(K(\und{a})) \cap h'(K(\und{a}')) \ne \emptyset$, where $K(\und{a})=K\cap I(\und{a})$ and $K(\und{a}')=K\cap I(\und{a}')$;
\item {\it stably intersecting\/} if it is still intersecting when we perturb it in $\cal{P}\times\cal{P}'$, and we perturb $(g,g')$ in $\Omega_\Sigma \times \Omega_{\Sigma'}$\,.
\end{itemize}

\noindent All these definitions are invariant under the action of the affine group and, therefore, make sense for smooth relative configurations.\\

\noindent As in previous, we can introduce the spaces $\cal{A}$, ${\cal{A}}'$ associated to the limit geometries of $g$,\,\,$g'$, respectively. We denote by $\cal{C}$ the quotient of $\cal{A}\times{\cal{A}}'$ by the diagonal action on the left of the affine group. An element of $\cal{C}$, represented by $(\und\theta,A) \in \cal{A}$,\,\, $(\und{\theta}', A') \in {\cal{A}}'$, is called a relative configuration of the limit geometries determined by $\und{\theta}$, $\und{\theta}'$. We have canonical maps
\begin{eqnarray*}
\cal{A}\times{\cal{A}}'&\to & \cal{P}\times{\cal{P}}'\\
\cal{C} &\to & Q
\end{eqnarray*}
 allowing  to define linked, intersecting, and stably intersecting configurations at the level of $\cal{A}\times{\cal{A}}'$ or $\cal{C}$.\\

\noindent{\bf Remark}: For a configuration $((\und{\theta}, A), (\und{\theta}',A'))$ of limit geometries, one could also consider the {\it weaker\/} notion of stable intersection obtained by considering perturbations of $g$, $g'$ in $\Omega_\Sigma \times \Omega_{\Sigma'}$ and perturbations of $(\und{\theta},A)$, $(\und{\theta}',A')$ in $\cal{A}\times{\cal{A}}'$. We do not know of any example of expansive maps $g$, $g'$, and configurations $(\und{\theta},A)$, $(\und{\theta}',A')$ which are stably intersecting in the weaker sense, but not in the stronger sense.\\

\noindent We consider the following subset $V$ of $\Omega_\Sigma \times \Omega_{\Sigma'}$\,. A pair $(g, g')$ belongs to $V$ if for any $[(\und{\theta},A), (\und{\theta}',A')] \in \cal{A} \times {\cal{A}}'$ there is a translation $R_t$ (in $\re$) such that $(R_t\circ A \circ k^{\und{\theta}}, A'\circ k^{\prime\und{\theta}'})$ is a stably intersecting configuration.\\
%%%%%%%%%%%%%%%%
%***** Definition here?
%%%%%%%%%%%%%%%%
\begin{Defi}\label{Property V}
We say that a pair $(\psi, \Lambda)$, where $\Lambda$ is a horseshoe for $\psi$, has the property $V$ if the stable and unstable cantor sets have the property $V$ in the above sense.
\end{Defi}
The more important result in this setting is:
\begin{T}[Moreira-Yoccoz \cite{MY1}]\label{MY1}Let $\varphi$ be a $C^{\infty}$ diffeomorphism with a horseshoe $\Lambda$. Let $K^s$,  $K^u$ are the stable and unstable  Cantor sets  respectively. Suppose that $HD(K^s)+HD(K^u)>1$.   If\,  $\mathcal{U}$ is  sufficiently small neighborhood $\varphi$ in $Diff^{\infty}(M)$, there is an open and dense set ${\mathcal{U}}^{\ast}\subset\mathcal{U}$
such that, for every $\psi\in {\mathcal{U}}^{\ast}$ the pair $(\psi,\Lambda_{\psi})$ has the property $V$.
\end{T}

\subsection{The Birkhoff Invariant}\label{BI}
%%%%%%%%%%%%%%%%%%%%%%%%%%%%%%%%%%%%%%%%%%%%%%%%%%%%%%%%%%%%%%%%%%%%%%%%%%%%%%%%%%%%%%%%%%%
Let $f\colon (\re^2,0)\to (\re^2,0)$ be a germ of diffeomorphism area-preserving (in dimension two is symplectic) and $0$ a hyperbolic fixed point with eigenvalues $\lambda$ and $\lambda^{-1}$, then the Birkhoff normal form (cf. \cite{Moser}) says that there is an area-preserving change of coordinates $\Phi$ such that $\Phi^{-1}\circ f \circ \Phi=N$, where 
$N(x,y)=(U(xy)x,U^{-1}(xy)y)$ and $U(xy)$ is a power series $\lambda+U_2xy+\cdots$ convergent in a neighborhood of $x=y=0$. In other words, in this coordinates $f$ can be written by 
\begin{equation}\label{EBI}
f(x,y)=(\lambda x(1+axy+\mathcal{O}(\Vert(x,y)\Vert^{4})),\lambda^{-1} y(1-axy+\mathcal{O}(\Vert(x,y)\Vert^{4})))
\end{equation}
and the number $a$ is called the \textit{Birkhoff Invariant} of $f$.
%LLLLLLLLLLLLLLLLLLLLLLLLLLLLLLLLLLLLLLLLLLLLLLLLLLLLLLLLLLLLLLLLLLLLLLLLLLLLLLLLLLLLLLLLLLLLLLLLL
\begin{Le}\label{Birkhoff Invariant}
The Birkhoff invariant for diffeomorphism area-preserving in $(\re^2,0)$ only depends on\, $3$-jets in $0$, $J^{3}(0)$. Moreover, the set of diffeomorphism area-preserving in $(\re^2,0)$ such that the Birkhoff invariant is non-zero is open, dense, and invariant in $J^{3}(0)$.
\end{Le}
%LLLLLLLLLLLLLLLLLLLLLLLLLLLLLLLLLLLLLLLLLLLLLLLLLLLLLLLLLLLLLLLLLLLLLLLLLLLLLLLLLLLLLLLLLLLLLLLLLL
\begin{proof}
For the proof of  \cite[Theorem 1 and 2]{Moser}, we have the first part and opening. For density, suppose that for some  $f\colon (\re^2,0)\to (\re^2,0)$, the Birkhoff invariant is zero, then for $\epsilon>0$ we consider the function 
$N_\epsilon(x,y):=(\lambda x(1+\mathcal{O}(\Vert(x,y)\Vert^{4})),\lambda^{-1} y(1+\mathcal{O}(\Vert(x,y)\Vert^{4})))+\epsilon(x^2y,-xy^2)$, then the function $f_{\epsilon}=\Phi\circ N_\epsilon \circ \Phi^{-1}$ is area-preserving diffeomorphism close to $f$ with the Birkhoff invariant  $\epsilon$. \\
Let $f$, $g$ be as above and suppose that the Birkhoff invariant for $f$ is non-zero, then $g^{-1}\circ f\circ g$ has the Birkhoff invariant non zero. Indeed, by the Birkhoff Normal Form \cite[Theorem 1]{Moser},  there is an area-preserving change of coordinates $\Phi$ such that $\Phi^{-1}\circ g^{-1}\circ f\circ g \circ \Phi$ has the form (\ref{EBI}), then $(g\circ \Phi)^{-1}\circ f \circ (g\circ \Phi)$ has the form (\ref{EBI}). In other words, there is another area-preserving change of coordinates 
$g\circ \Phi$ such that $f$ has the form (\ref{EBI}), but by the unicity of the Birkhoff normal form (see \cite[page 674]{Moser}), we have that the Birkhoff invariant of $g^{-1}\circ f\circ g$ is equal to the Birkhoff invariant of $f$, therefore non-zero.
\end{proof}

%%%%%%%%%%%%%%%%
%***** references ajeitar 
%%%%%%%%%%%%%%%%
$$\bf{Acknowledgments}$$
The author would like to thank Carlos Gustavo Moreira (Gugu) and Carlos Matheus for very helpful discussions and suggestions during the preparation of this paper. \\
 \ \\
%%%%%%%%%%%%%%%%
\noindent \textbf{Sergio Augusto Roma\~na Ibarra}\\
Universidade Federal do Rio de Janeiro\\
Av. Athos da Silveira Ramos 149, Centro de Tecnologia \ - Bloco C \ - Cidade Universit\'aria \ - Ilha do Fund\~ao, cep 21941-909 \\
Rio de Janeiro-Brasil\\
E-mail: sergiori@im.ufrj.br \\
\ \\
\bibliographystyle{alpha}	% (uses file "plain.bst")
\bibliography{bibtex}

\end{document}